\newtheorem{theorem}{Theorem}[section]
\newtheorem{proposition}[theorem]{Proposition}
\newtheorem{lemma}[theorem]{Lemma}
\newtheorem{corollary}[theorem]{Corollary}
\theoremstyle{definition}
\newtheorem{definition}[theorem]{Definition}
\theoremstyle{remark}
\newtheorem{remark}[theorem]{Remark}
\numberwithin{equation}{section}
\def\R{\mathbb R}
\def\C{\mathbb C}
\def\N{\mathbb N}
\def\indic{{\rm 1\kern -2.3pt I\hskip 1pt}}
\def\E{\mathop{\mathbb E}}
\def\a{\alpha}
\def\d{\delta}
\def\e{\varepsilon}
\def\s{\sigma}
\def\z{\zeta}
\def\<{\langle}
\def\>{\rangle}
\def\dpr{{d_{\proj}}}
\def\dsp{{d_{\bS}}}
\def\cM{{\mathscr M}}
\def\vol{\mathsf{vol}}
\def\Oh{\mathcal O}
\def\bd{\mathbf{d}}
\def\HH{{\mathcal H}}
\def\cE{{\mathcal E}}
\def\cC{{\mathcal C}}
\def\cR{{\mathcal R}}
\def\Hd{{{\mathcal H}_{\bd}}}
\def\HdR{{\mathcal H}_{\bd}^{\R}}
\def\proj{{\mathbb P}}
\def\mun{\mu_{\rm norm}}
\def\mum{\mu_{\max}}
\def\size{\mathrm{size}}
\def\av{\mathrm{av}}
\def\Prob{\mathop{\mathrm{Prob}}}
\def\bS{\mathbb{S}}
\def\Dn{\mathcal{D}}
\def\of{\overline{f}}
\def\og{\overline{g}}
\def\oh{\overline{h}}
\def\oq{\overline{q}}
\def\ok{\overline{k}}
\def\oA{\overline{A}}
\def\ox{\overline{x}}
\def\NJ{\mathrm{NJ}}
\def\oM{\overline{M}}
\def\diag{\mathsf{diag}}
\def\NJ{\mathrm{NJ}}
\def\ox{\overline{x}}
\def\graph{\mathrm{graph}}
\def\cU{\mathcal{U}}
\def\Re{\mathrm{Re}}
\def\id{\mathrm{id}}
\def\Id{\mathrm{I}}
\def\H{{\mathcal H}}
\def\op{{}}
\def\spann{\mathsf{span}}
\def\rk{\mathsf{rank}}
\def\Var{\mathsf{Var}}
\def\time{\mathsf{time}}
\def\ALH{{\sf{ALH}}}
\def\LV{{\sf{LV}}}
\def\FD{{\sf{MD}}}
\renewcommand{\bar}{\overline}
\renewcommand{\tilde}{\widetilde}
\renewcommand{\Re}{\mathsf{Re}}
\def\oU{\bar{U}}
\def\bz{{\boldsymbol z}}
\def\rhozero{\rho_{\bf st}}
\def\aff{\mathrm{aff}}
\def\ReA{{\sf{Ren}}}
\def\Prf{\mathrm{Probfail}}
\def\IRe{{\sf{ItRen}}}
\def\hV{V}
\def\hW{W}
\def\Vp{V_{\proj}}
\def\ps{p}
\newcommand{\transp}{^{\rm T}}
\def\HdR{{\mathcal H}_{\bd,\R}}
\def\pr{\mathrm{pr}}
\def\algo{\begin{center}
               \begin{minipage}{6in}
               \begin{tabbing}
               \marks}
\def\falgo{\end{tabbing}
                \end{minipage}
                \end{center}}
\def\marks{nn\= nn\= nn\= nn\= nn\= nn\= nn\= \kill}
\def\eproof{{\mbox{}\hfill\qqed}\medskip}
\newcommand\qqed{{\unskip\nobreak\hfil\penalty50\hskip2em\vadjust{}
\nobreak\hfil$\Box$\parfillskip=0pt\finalhyphendemerits=0\par}}
\begin{document}

\title[On a Problem Posed by Steve Smale]
{\bf On a Problem Posed by Steve Smale}

\author{Peter B\"urgisser}
\thanks{Partially supported by DFG grant BU 1371/2-1 and
Paderborn Institute for Scientific Computation (PaSCo).}

\address{Institute of Mathematics, University of Paderborn, D-33098 Paderborn, Germany}
\email{pbuerg@upb.de}

\author{Felipe Cucker}
\thanks{Partially supported by GRF grant CityU 100810.}

\address{Dept.\ of Mathematics, City University of Hong Kong,
Kowloon Tong, Hong Kong}
\email{macucker@cityu.edu.hk}

\thanks{An extended abstract of this work was presented at STOC 2010 under the title
``Solving Polynomial Equations in Smoothed Polynomial Time and a Near Solution to Smale's
17th Problem''}

\date{\today}

\begin{abstract}
The 17th of the problems proposed by Steve Smale for the 21st century
asks for the existence of a deterministic algorithm
computing an approximate solution of a system of $n$ complex polynomials
in $n$ unknowns in time polynomial, on the average, in the size $N$ of the
input system. A partial solution to this problem was given by Carlos Beltr\'an and
Luis Miguel Pardo who exhibited a randomized algorithm doing so. In this
paper we further extend this result in several directions. Firstly, we exhibit a
linear homotopy algorithm that efficiently implements a non-constructive
idea of Mike Shub. This algorithm is then used in a randomized algorithm,
call it \LV, \`a la Beltr\'an-Pardo.  Secondly, we
perform a smoothed analysis (in the sense of Spielman and Teng) of
algorithm \LV\ and prove that its
smoothed complexity is polynomial in the input size and $\s^{-1}$,
where $\s$ controls the size of of the random perturbation of the input
systems. Thirdly, we  perform a condition-based analysis of  \LV. That is,
we give a bound, for each system $f$, of the expected running time
of \LV\ with input $f$. In addition to its dependence on $N$ this bound
also depends on the condition of $f$. Fourthly, and to conclude, we return
to Smale's 17th problem as originally formulated for deterministic algorithms.
We exhibit such an algorithm and show that its average complexity is
$N^{\Oh(\log\log N)}$. This is nearly a solution to Smale's 17th problem.
\end{abstract}

\keywords{polynomial equation solving, homotopy methods, approximate zero,
complexity, polynomial time, smoothed analysis}

\subjclass[2000]{65H20, 65Y20}

\maketitle

{\small
\tableofcontents
}

\section{Introduction}

In 2000, Steve Smale published a list of mathematical problems for the 21st
century~\cite{smale:00}. The 17th problem in the list reads as follows:
{\small
\begin{quote}
{\em Can a zero of $n$ complex polynomial equations in $n$ unknowns be found
approximately, on the average, in polynomial time with a uniform algorithm?}
\end{quote}
}
Smale pointed out that ``it is reasonable'' to homogenize the polynomial
equations by adding a new variable and to work in projective space after which
he made precise the different notions intervening in the question above. We
provide these definitions in full detail in Section~\ref{sec:prelim}.
Before doing so, in the
remainder of this section, we briefly describe the recent history of Smale's 17th
problem and the particular contribution of the present paper. The following
summary of notations should suffice for this purpose.

We denote by $\Hd$ the linear space of complex homogeneous polynomial
systems in $n+1$ variables, with a fixed degree pattern
$\bd=(d_1,\ldots,d_n)$. We let $D=\max_i d_i$, $N=\dim_{\C}\Hd$, and
$\Dn=\prod_i d_i$. We endow this space with the unitarily invariant
Bombieri-Weyl Hermitian product and consider the unit sphere $S(\Hd)$
with respect to the norm induced by this product. We then make this
sphere a probability space by considering the uniform measure on
it. The expression ``on the average'' refers to expectation on
this probability space. Also, the expression ``approximate zero''
refers to a point for which Newton's method, starting at it, converges
immediately, quadratically fast.

This is the setting underlying the series of
papers~\cite{Bez1,Bez2,Bez3,Bez4,Bez5} ---commonly referred to as ``the
B\'ezout series''--- written by Shub and Smale
during the first half of the 1990s, a collection of ideas, methods, and results
that pervade all the research done in Smale's 17th problem since this was
proposed. The overall idea in the B\'ezout series is to use a linear homotopy.
That is, one starts
with a system $g$ and a zero $\z$ of $g$ and considers the segment
$E_{f,g}$ with extremities $f$ and $g$. Here $f$ is the system whose zero
we want to compute. Almost surely, when one moves from $g$ to $f$,
the zero $\z$ of $g$ follows a curve in projective space to end in a zero
of $f$. The homotopy method consists of dividing the segment $E_{f,g}$ in
a number, say $k$, of subsegments $E_i$ small enough to ensure that an
approximate zero $x_i$ of the system at the origin of $E_i$ can be made into
an approximate zero $x_{i+1}$ of the system at its end (via one step of
Newton's method).
The difficulty of this overall idea lies in the following issues:
\begin{enumerate}
\item
How does one choose the initial pair $(g,\z)$?
\item
How does one choose the subsegments $E_i$? In particular, how
large should~$k$ be?
\end{enumerate}
The state of the art at the end of the B\'ezout series, i.e., in~\cite{Bez5},
showed an incomplete picture. For~(2), the rule consisted of taking a
regular subdivision of $E_{f,g}$ for a given $k$, executing the path-following
procedure, and repeating with $k$ replaced by $2k$  if the final point could not
be shown to be an approximate zero of $f$ (Shub and Smale provided criteria
for checking this). Concerning~(1), Shub and Smale proved that good initial pairs
$(g,\z)$ (in the sense that the average number of iterations for the rule above
was polynomial in the size of $f$) existed for each degree pattern $\bd$, but
they could not exhibit a procedure to  generate one such pair.

The next breakthrough took a decade to come. Beltr\'an and Pardo proposed
in~\cite{BeltranPardo08,BePa08a} that the initial pair $(g,\z)$
should be randomly chosen. The consideration of randomized
algorithms departs from the formulation
of Smale's 17th problem\footnote{In his description of Problem~17 Smale
writes ``Time is measured by the number of arithmetic operations and comparisons,
$\leq$, using real machines (as in Problem~3)'' and in the latter he points that,
``In [Blum-Shub-Smale,1989] a satisfactory definition [of these machines] is
proposed.'' The paper~\cite{bss:89} quoted by Smale deals exclusively with
deterministic machines. Furthermore, Smale adds that ``a probability measure
must be put on the space of all such~$f$, for each $\bd=(d_1,\ldots,d_n)$, and
the time of an algorithm is averaged over the space of $f$.'' Hence, the
expression  `average time' refers to expectation over the input data only.}
but it is widely accepted that, in practical terms,
such algorithms are as good as their deterministic siblings. And in the case
at hand this departure turned out to pay off. The average (over $f$) of the
expected (over $(g,\z)$) number of iterations of the algorithm
proposed in~\cite{BePa08a} is $\Oh(n^5N^2D^3\log\Dn)$.
One of the most notable features of the ideas introduced
by Beltr\'an and Pardo is the use of a measure on the space
of pairs $(g,\z)$ which is friendly enough to perform a probabilistic
analysis while, at the same time, does allow for efficient sampling.

Shortly after the publication of~\cite{BeltranPardo08,BePa08a}
Shub wrote a short paper of great importance~\cite{Bez6}. Complexity
bounds in both the B\'ezout series and the Beltr\'an-Pardo
results rely on condition numbers. Shub and Smale had introduced
a measure of condition $\mun(f,\z)$ for $f\in\Hd$ and
$\z\in\C^{n+1}$ which, in case  $\z$ is a zero of $f$, quantifies
how much $\z$ varies when $f$ is slightly perturbed. Using
this measure they defined the {\em condition number} of a system
$f$ by taking
\begin{equation}\label{eq:muSS}
     \mum(f):=\max_{\z\mid f(\z)=0}\mun(f,\z).
\end{equation}
The bounds mentioned above make use of an estimate for the
worst-condi\-tioned system along the segment $E_{f,g}$, that is,
of the quantity
\begin{equation}\label{eq:mumax}
     \max_{q\in E_{f,g}}\mum(q).
\end{equation}
The main result in~\cite{Bez6} shows that there exists a partition of
$E_{f,g}$ which successfully computes an approximate zero of $f$
whose number $k$ of pieces satisfies
\begin{equation}\label{eq:integral_mu2}
     k\leq C D^{3/2}\int_{q\in E_{f,g}} \mu_2^2(q)\, dq ,
\end{equation}
where $C$ is a constant and $\mu_2(q)$ is the
{\em mean square condition number}
of $q$ given by
\begin{equation}\label{eq:mu2}
   \mu_2^2(q):=\frac{1}{\Dn}\sum_{\z\mid q(\z)=0}\mun^2(q,\z).
\end{equation}
This partition is explicitly described in~\cite{Bez6},
but no constructive procedure to compute the partition is given there.

In an oversight of this non-constructibility, Beltr\'an and
Pardo~\cite{BePa08b} provided a new version of their
randomized algorithm\footnote{The algorithm in~\cite{BePa08b} explicitly
calls as a subroutine ``the homotopy algorithm of~\cite{Bez6}'' without noticing
that the partition
in~\cite{Bez6} is non-algorithmic. Actually, the word `algorithm' is never used
in~\cite{Bez6}. The main goal of~\cite{Bez6}, as stated in the abstract, is to
motivate ``the study of short paths or geodesics in the condition metric''
---the proof of~\eqref{eq:integral_mu2} does not require the homotopy to be
linear and one may wonder whether other paths in $\Hd$ may substantially
decrease the integral in the right-hand side.
This goal has been addressed, but not attained, in~\cite{Bez7}. As of today
it remains a fascinating
open problem.} with an improved complexity of $\Oh(D^{3/2}nN)$.
\smallskip

A first goal of this paper is to validate Beltr\'an and Pardo's analysis
in~\cite{BePa08b} by exhibiting an efficiently constructible partition
of $E_{f,g}$ which satisfies a bound like~\eqref{eq:integral_mu2}. Our
way of doing so owes much to the ideas in~\cite{Bez6}. The path-following
procedure \ALH\ relying on this partition  is described in detail
in~\S\ref{se:homotopy} together with a result,
Theorem~\ref{thm:main_path_following}, bounding its complexity
as in~\eqref{eq:integral_mu2}.

The second goal of this paper is to perform a smoothed analysis  of
a randomized algorithm (essentially Beltr\'an-Pardo randomization plus \ALH)
computing a zero of $f$, which we call \LV.
What smoothed analysis is, is succinctly explained
in the citation of the G\"odel prize 2008 awarded to its creators,
Daniel Spielman and Teng Shang-Hua\footnote{See~%
{\tt http://www.fmi.uni-stuttgart.de/ti/personen/Diekert/citation08.pdf}
for the whole citation}.
{\small
\begin{quote}
Smoothed Analysis is a novel approach to the analysis of algorithms. It
bridges the gap between worst-case and average case behavior by considering
the performance of algorithms under a small perturbation of the input.
As a result, it provides a new rigorous framework for explaining the practical
success of algorithms and heuristics that could not be well understood
through traditional algorithm analysis methods.
\end{quote}
}
In a nutshell, smoothed analysis is a probabilistic analysis which replaces the
`evenly spread' measures underlying the usual average-case analysis
(uniform measures, standard normals, \dots) by a measure centered at the
input data. That is, it replaces the `average data input'
(an unlikely input in actual computations) by a small random
perturbation of a worst-case data and substitutes the typical
quantity studied in the average-case context,
$$
     \E_{f\sim \cR} \varphi(f),
$$
by
$$
   \sup_{\of} \E_{f\sim \cC(\of,r)} \varphi(f).
$$
Here $\varphi(f)$ is the function of $f$ one is interested in (e.g., the complexity
of an algorithm over input $f$),
$\cR$ is the `evenly spread' measure mentioned above and
$\cC(\of,r)$ is an isotropic measure centered at $\of$ with a dispersion
(e.g., variance) given by a (small) parameter $r>0$.

An immediate advantage of smoothed analysis is its robustness
with respect to the measure $\cC$ (see~\S\ref{subsec:SA} below).
This is in contrast with the most common critique to average-case
analysis: ``A bound on the performance of an algorithm under one
distribution says little about its performance under another distribution,
and may say little about the inputs that occur in practice''~\cite{ST:02}.

The precise details of the smoothed analysis we perform for
zero finding are in~\S\ref{subsec:SA}.

To describe the third goal of this paper we recall Smale's ideas
of complexity analysis as exposed in~\cite{Smale97}. In this
program-setting paper Smale writes that he sees ``much of the
complexity theory [\dots] of numerical analysis conveniently
represented by a two-part scheme.'' The first part amounts to
obtain, for the running time $\time(f)$ of an algorithm on input $f$,
an estimate of the form
\begin{equation}\label{eq:smale1}
    \time(f)\leq K(\size(f) +\mu(f))^c
\end{equation}
where $K,c$ are positive constants and $\mu(f)$ is a condition number
for $f$. The second takes the form
\begin{equation}\label{eq:smale2}
    \Prob\{\mu(f)\geq T\}\ \leq\  T^{-c}
\end{equation}
``where a probability measure has been put on the space of inputs.''
The first part of this scheme provides understanding on the behavior
of the algorithm for specific inputs $f$ (in terms of their condition as
measured by $\mu(f)$). The second, combined with the first, allows
one to obtain probability bounds for $\time(f)$ in terms of $\size(f)$
only. But these bounds say little about $\time(f)$ for actual input
data $f$.

Part one of Smale's program is missing in the work related with his
17th problem. All estimates on the running time of path-following
procedures for a given $f$ occurring in both the B\'ezout series
and the work by Beltr\'an and Pardo are expressed in terms of
the quantity in~\eqref{eq:mumax} or the integral
in~\eqref{eq:integral_mu2}, not purely in terms of the condition of
$f$. We fill this gap by showing for the expected running time
of \LV\ a bound like~\eqref{eq:smale1}
with $\mu(f)=\mum(f)$. The precise statement,
Theorem~\ref{thm:CBA}, is in~\S\ref{subsec:CBA} below.

Last but not least, to close this introduction, we return to its opening theme:
Smale's 17th problem. Even though randomized algorithms are
efficient in theory and reliable in practice they do not offer an answer to
the question of the existence of
a deterministic algorithm computing approximate zeros of complex polynomial
systems in average polynomial time.  The situation is akin to the development of
primality testing. It was precisely with this problem that randomized algorithms
became a means to deal with apparently intractable
problems~\cite{SoSt:77,Rabin:80}. Yet, the eventual display of a
deterministic polynomial-time
algorithm~\cite{AKS:04} was justly welcomed as a major achievement.
The fourth main result in this paper exhibits a deterministic algorithm computing
approximate zeros in average time $N^{\Oh(\log\log N)}$.
To do so we design and analyze
a deterministic homotopy algorithm, call it \FD, whose average complexity is
polynomial in $n$ and $N$ and exponential in $D$. This already yields
a polynomial-time algorithm when one restricts the degree $D$ to be
at most $n^{1-\e}$ for any fixed $\e>0$ (and, in particular, when $D$ is
fixed as in a system of quadratic or cubic equations). Algorithm  \FD\ is
fast when $D$ is small.
We complement it with an algorithm that uses a procedure proposed
by Jim Renegar~\cite{rene:89} and which
computes approximate zeros similarly fast when $D$ is large.

In order to prove the results described above we have relied
on a number of ideas and techniques. Some of them ---e.g., the
use of the coarea formula or of the Bombieri-Weyl Hermitian inner product---
are taken from the B\'ezout series and are pervasive in the literature on the
subject. Some others ---notably the use of the Gaussian distribution
and its truncations in Euclidean space instead of the uniform distribution
on a sphere or a projective space--- are less common. The blending of
these ideas has allowed us a development which unifies the treatment
of the several situations we consider for zero finding in this paper.


\subsection*{Acknowledgments}

Thanks go to Carlos Beltr\'an and Jean-Pierre Dedieu
for helpful comments.
We are very grateful to Mike Shub for constructive criticism and
insightful comments that helped to improve the paper
considerably.
This work was finalized during the special semester on Foundations
of Computational Mathematics in the fall of 2009.
We thank the Fields Institute in Toronto for hospitality and
financial support.

\section{Preliminaries}\label{sec:prelim}

\subsection{Setting and Notation}

For $d\in\N$ we denote by $\HH_d$ the subspace of $\C[X_0,\ldots,X_n]$
of homogeneous polynomials of degree $d$. For $f\in \HH_d$ we write
$$
   f(x) = \sum_{\alpha} {d \choose \alpha}^{1/2}\, a_\alpha X^\alpha
$$
where $\alpha=(\alpha_0, \dots, \alpha_n)$ is assumed to range over
all multi-indices such that $|\alpha| = \sum_{k=0}^n \alpha_k = d$,
${d\choose \alpha}$ denotes the multinomial coefficient,
and $X^\alpha:= X_0^{\alpha_0}X_1^{\alpha_1}\cdots X_n^{\alpha_n}$.
That is, we take for basis of the linear space $\HH_d$ the
{\em Bombieri-Weyl} basis consisting of the monomials
${d \choose \alpha}^{1/2}X^\alpha$. A reason to do so is
that the Hermitian inner product associated to this basis is unitarily invariant.
That is, if $g\in\HH_d$ is given by
$g(x) = \sum_{\alpha} {d\choose \alpha}^{1/2}b_\alpha X^\alpha$,
then the canonical Hermitian inner product
$$
    \langle f,g\rangle =\sum_{|\alpha|=d} a_\alpha\, \overline{b_\alpha}
$$
satisfies, for all element $\nu$ in the unitary group $\cU(n+1)$, that
$$
    \langle f,g\rangle =\langle f\circ \nu,g\circ\nu\rangle.
$$
Fix $d_1,\ldots,d_n\in\N\setminus\{0\}$ and
let $\Hd=\HH_{d_1}\times \ldots\times\HH_{d_n}$ be the
vector space of polynomial systems $f=(f_1,\ldots,f_n)$ with
$f_i\in\C[X_0,\ldots,X_n]$ homogeneous of degree $d_i$.
The space $\Hd$ is naturally endowed with a Hermitian inner product
$\langle f,g \rangle = \sum_{i=1}^n \langle f_i, g_i \rangle$.
We denote by $\|f\|$ the corresponding norm of $f\in\Hd$.

Recall that $N=\dim_\C\Hd$ and $D=\max_i d_i$.
Also, in the rest of this paper, we
assume $D\geq 2$ (the case $D=1$ being solvable with elementary
linear algebra).

Let $\proj^n:=\proj(\C^{n+1})$ denote the complex projective space
associated to $\C^{n+1}$ and $S(\Hd)$ the unit sphere of $\Hd$.
These are smooth manifolds that naturally carry the structure of a
Riemannian manifold
(for $\proj^n$ the metric is called Fubini-Study metric).
We will denote by $\dpr$ and $\dsp$ their Riemannian distances which,
in both cases, amount to the angle between the arguments.
Specifically, for $x,y\in\proj^n$ one has
\begin{equation}\label{eq:projdist}
 \cos\dpr(x,y) = \frac{|\langle x,y\rangle|}{\|x\|\, \|y\|}.
\end{equation}
Ocasionally, for
$f,g\in\Hd\setminus\{0\}$, we will abuse language and write
$\dsp(f,g)$ to denote this angle,
that is, the distance $\dsp\big(\frac{f}{\|f\|},\frac{g}{\|g\|}\big)$.

We define the {\em solution variety} to be
$$
  \Vp:= \{(f,\z)\in \Hd\times \proj^n \mid f\neq0 \mbox{ and } f(\z)= 0\}.
$$
This is a smooth submanifold of $\Hd\times \proj^n$
and hence also carries a Riemannian structure.
We denote by $\Vp(f)$ the zero set of $f\in\Hd$ in $\proj^n$.
By B\'ezout's Theorem, it contains $\Dn$ points for almost all $f$.
Let $Df(\zeta)_{|T_\zeta}$ denote the restriction of the derivative of
$f\colon\C^{n+1}\to\C^n$ at~$\zeta$ to the tangent space
$T_\zeta :=\{v\in\C^{n+1}\mid \langle v,\zeta\rangle = 0\}$
of $\proj^n$ at $\zeta$.
The {\em subvariety of ill-posed pairs} is defined as
\begin{equation*}\label{eq:Sigma'}
 \Sigma'_\proj := \{(f,\z)\in \Vp\mid \rk\,Df(\zeta)_{|T_\zeta} < n\} .
\end{equation*}
Note that $(f,\z)\not\in\Sigma'_\proj$ means that $\z$ is a simple zero of~$f$.
In this case, by the implicit function theorem, the projection
$\Vp\to\Hd,(g,x)\mapsto g$
can be locally inverted around $(f,\z)$.
The image $\Sigma$ of $\Sigma'_\proj$ under the
projection $\Vp\to\Hd$ is called the
{\em discriminant variety}.

\subsection{Newton's Method}\label{se:Newton}

In~\cite{Shub93b}, Mike Shub introduced the following projective version
of Newton's method.
We associate to $f\in\Hd$
(with $Df(x)$ of rank~$n$ for some~$x$) a map
$N_f:\C^{n+1}\setminus\{0\}\to\C^{n+1}\setminus\{0\}$ defined
(almost everywhere) by
$$
       N_f(x)=x-Df(x)_{|T_x}^{-1}f(x).
$$
Note that $N_f(x)$ is homogeneous of degree~0 in~$f$ and
of degree~1 in~$x$ so that
$N_f$ induces a rational map from $\proj^n$ to $\proj^n$
(which we will still denote by $N_f$) and this map
is invariant under multiplication of $f$ by constants.

We note that $N_f(x)$ can be  computed from $f$  and $x$
very efficiently: since the Jacobian $Df(x)$ can be evaluated with $\Oh(N)$
arithmetic operations~\cite{bast:83}, one can do with a total of
$\Oh(N+n^3)$ arithmetic operations.

It is well-known that when $x$ is sufficiently close to a simple zero $\z$ of $f$,
the sequence of Newton iterates beginning at $x$ will converge quadratically
fast to $\z$. This property lead Steve Smale to define the following intrinsic
notion of approximate zero.

\begin{definition}\label{def:app-zero}
By an {\em approximate zero} of $f\in\Hd$ associated with a zero $\z\in\proj^n$ of $f$
we understand a point $x\in\proj^n$ such that the sequence of Newton iterates
(adapted to projective space)
$$
   x_{i+1}:= N_{f}(x_{i})
$$
with initial point $x_0:=x$ converges immediately quadratically to $\z$, i.e.,
$$
 \dpr(x_i,\z)\le \Big(\frac12\Big)^{2^i -1}\ \dpr(x_0,\z)
$$
for all $i\in\N$.
\end{definition}

\subsection{Condition Numbers}\label{se:cond-num}

How close need $x$ to be from $\z$ to be an approximate zero? This
depends on how well conditioned the zero $\z$ is.

For $f\in\Hd$ and $x\in\C^{n+1}\setminus\{0\}$ we define the {\em
(normalized) condition number}~$\mun(f,x)$ by
$$
  \mun(f,x):= \|f\|\left\|  \big(Df(x)_{\mid T_x}\big)^{-1}
  \diag(\sqrt{d_1}\|x\|^{d_1-1},\dots,\sqrt{d_n}
   \|x\|^{d_n-1})\right\|_\op ,
$$
where $T_x$ denotes the Hermitian complement of $\C x$,
the right-hand side norm is the spectral norm,
and $\diag(a_i)$ denotes the diagonal matrix with entries~$a_i$.
Note that $\mun(f,x)$ is homogeneous of degree 0 in both arguments,
hence it is well defined for $(f,x)\in \Hd\times \proj^n$.
If $x$ is a simple zero of~$f$, then
$\ker Df(x)=\C x$ and hence
$\big(Df(x)_{\mid T_x}\big)^{-1}$ can be identified with
the Moore-Penrose inverse $Df(x)^\dagger$ of~$Df(x)$.
We have $\mun(f,x)\ge 1$,
cf.~\cite[\S12.4, Cor.~3]{bcss:95}.

The following result (essentially, a $\gamma$-Theorem in Smale's
theory of estimates for Newton's method~\cite{Smale86}) quantifies
our claim above.

\begin{theorem}\label{thm:gamma}
Assume $f(\zeta)=0$ and $\dpr(x,\zeta)\leq
\frac{u_0}{D^{3/2}\mun(f,\zeta)}$ where
$u_0:=3-\sqrt{7}\approx 0.3542$.
Then $x$ is an approximate zero of $f$ associated with $\z$.
\end{theorem}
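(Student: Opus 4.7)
The plan is to derive Theorem~\ref{thm:gamma} from the standard two-step machinery of Smale's point estimates, adapted to projective space: first invoke a projective $\gamma$-theorem stating that if the product $\dpr(x,\zeta)\cdot\g(f,\zeta)$ is below an explicit threshold then $x$ is an approximate zero, and then bound the higher-derivative invariant $\g(f,\zeta)$ in terms of $\mun(f,\zeta)$ and the degrees. Here $\g(f,\zeta)$ is defined (projectively) as
\[
  \g(f,\zeta)\ :=\ \sup_{k\ge 2}\ \Big\|\,Df(\zeta)_{\mid T_\zeta}^{-1}\,\tfrac{D^k f(\zeta)}{k!}\,\Big\|_{\op}^{\!1/(k-1)},
\]
with the usual normalization so that $\g$ is homogeneous of degree $0$ in $f$ and in $\zeta$.

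The first ingredient is Shub's projective $\gamma$-theorem from \cite{Shub93b}: there is an explicit threshold $c_0$ such that whenever the projective Newton iterates $x_{i+1}=N_f(x_i)$ start at a point $x_0=x$ satisfying $\dpr(x,\zeta)\,\g(f,\zeta)\le c_0$, one has the immediate quadratic convergence estimate of Definition~\ref{def:app-zero}. The numerical value of $c_0$ arises from solving the classical recursion on the auxiliary function $\psi(u)=1-4u+2u^2$; the relevant root yields exactly the constant $u_0=3-\sqrt{7}$ after the adjustments needed to pass from $\|x-\zeta\|$ in the ambient space to the angular distance $\dpr$.

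The second ingredient is the higher-derivative estimate of Shub--Smale from the Bézout series, which bounds all derivatives of $f$ in terms of the conditioning at $\zeta$: one has
\[
  \g(f,\zeta)\ \le\ \tfrac{1}{2}\, D^{3/2}\, \mun(f,\zeta).
\]
The proof of this inequality uses the unitary invariance of the Bombieri--Weyl inner product together with a term-by-term estimate of the coefficients in the Taylor expansion at $\zeta$, exploiting that the Bombieri--Weyl norm dominates the coefficients of every derivative up to the universal factor $D^{1/2}$ per order. Combining the two steps gives
\[
  \dpr(x,\zeta)\,\g(f,\zeta)\ \le\ \tfrac{u_0}{D^{3/2}\mun(f,\zeta)}\cdot \tfrac12\,D^{3/2}\mun(f,\zeta)\ =\ \tfrac{u_0}{2},
\]
which is below the threshold of the projective $\gamma$-theorem, so $x$ is an approximate zero.

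The main technical obstacle is not conceptual but bookkeeping: one must keep track of exactly how the various normalizations (the spectral norm and the $\sqrt{d_i}\|\zeta\|^{d_i-1}$ factors defining $\mun$, the angular vs.\ ambient distance appearing in $N_f$ viewed on $\proj^n$, and the Bombieri--Weyl weights) interact so that the constant $3-\sqrt{7}$ drops out sharply. Once those normalizations are aligned, both the $\gamma$-theorem step and the higher-derivative estimate are black boxes, and the claimed bound follows immediately.
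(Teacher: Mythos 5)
Your proposal is correct and takes essentially the same route as the paper: the paper's proof is precisely the combination of the projective $\gamma$-theorem of \cite[p.~263, Thm.~1]{bcss:95} with the higher-derivative estimate $\g(f,\z)\le\tfrac12 D^{3/2}\mun(f,\z)$ of \cite[p.~267, Thm.~2]{bcss:95}, and your computation showing $\dpr(x,\z)\,\g(f,\z)\le u_0/2=(3-\sqrt7)/2$ hits exactly the threshold of that $\gamma$-theorem.
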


\begin{proof}
This is an immediate consequence of the projective $\gamma$-Theorem
in~\cite[p.~263, Thm.~1]{bcss:95} combined with the higher
derivative estimate~\cite[p.~267, Thm.~2]{bcss:95}.
\end{proof}

\subsection{Gaussian distributions}\label{se:Gauss}

The distribution of input data will be modelled with Gaussians.
Let $\ox\in\R^n$ and $\s>0$.
We recall that the Gaussian distribution $N(\ox,\s^2\Id)$ on $\R^n$
with mean~$\ox$ and covariance matrix $\s^2\Id$
is given by the density
$$
 \rho(x) = \Big(\frac1{\s\sqrt{2\pi}}\Big)^n\,
  \exp\big(-\frac{\|x -\ox\|^2}{2\s^2}\big) .
$$

\section{Statement of Main Results}\label{sec:main_results}

\subsection{The Homotopy Continuation Routine \ALH}\label{se:homotopy}

Suppose that we are given an input system $f\in\Hd$ and an initial
pair $(g,\z)$ in the solution variety~$\Vp$ such that
$f$ and $g$ are $\R$-linearly independent.
Let $\a=\dsp(f,g)$. Consider the line segment
$E_{f,g}$ in $\Hd$ with endpoints~$f$ and~$g$. We parameterize this
segment by writing
$$
  E_{f,g}=\{q_\tau\in\Hd\mid \tau\in[0,1]\}
$$
with $q_\tau$ being the only point in $E_{f,g}$ such that
$\dsp(g,q_\tau)=\tau\a$ (see Figure~\ref{fig:1}).
Explicitly, we have
$q_\tau = tf+(1-t)g$, where $t=t(\tau)$ is given by
Equation~\eqref{eq:lambda} below.
If $E_{f,g}$ does not intersect the discriminant variety~$\Sigma$,
there is a unique continuous map
$[0,1]\to \Vp,\tau \mapsto (q_\tau,\z_\tau)$
such that $(q_0,\z_0)= (g,\z)$,
called the {\em lifting} of $E_{f,g}$
with origin $(g,\z)$.
In order to find an approximation
of the zero $\z_1$ of $f=q_1$ we may start with the zero $\z=\z_0$
of $g=q_0$ and numerically follow the path $(q_\tau,\z_\tau)$ by
subdividing $[0,1]$ with points $0=\tau_0<\tau_1<\cdots<\tau_k=1$
and by successively computing approximations~$x_i$ of~$\z_{\tau_i}$
by Newton's method.

More precisely, we consider the following algorithm \ALH\
(Adaptive Linear Homotopy)
with the stepsize parameter $\lambda=6.67\cdot 10^{-3}$.
\medskip

\begin{center}
\algo
\> Algorithm \ALH\\[2pt]
\>{\bf input} $f,g\in \Hd$ and $\zeta\in\proj^n$ such that $g(\zeta)=0$\\[2pt]
\>\> $\a:=\dsp(f,g)$, $r:=\|f\|$, $s:=\|g\|$\\[2pt]
\>\>$\tau:=0$, $q:=g$, $x:=\zeta$\\[2pt]
\>\> {\tt repeat}\\[2pt]
\>\>\>$\Delta\tau:= \frac{\lambda}{\alpha D^{3/2}\mun^2(q,x)}$\\[2pt]
\>\>\>$\tau:=\min\{1,\tau+\Delta\tau\}$\\[2pt]
\>\>\>$t:=\frac{s}{r\sin\a\cot(\tau\alpha) - r\cos\alpha + s}$\\[2pt]
\>\>\>$q:=tf+(1-t)g$\\[2pt]
\>\>\>$x:=N_q(x)$\\[2pt]
\>\> {\tt until} $\tau= 1$\\[2pt]
\>\> {\tt RETURN $x$}
\falgo
\end{center}

\medskip

Our main result for this algorithm, which we will prove in
Section~\ref{sec:ALH}, is the following.

\begin{theorem}\label{thm:main_path_following}
The algorithm \ALH\ stops after at most $k$ steps with
$$
     k \le 245\,D^{3/2}\,\dsp(f,g)\int_0^1\mun^2(q_\tau,\zeta_\tau)\,d\tau.
$$
The returned point $x$ is an approximate zero of $f$ with
associated zero $\z_1$.
\end{theorem}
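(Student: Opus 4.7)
My plan is to establish an inductive loop invariant certifying that, at the start of iteration $i$, the current iterate $x_i$ is an approximate zero of $q_{\tau_i}$ associated with $\zeta_{\tau_i}$ in a quantitative, strengthened sense, say $\dpr(x_i,\zeta_{\tau_i}) \le \frac{c_0}{D^{3/2}\mun(q_{\tau_i},\zeta_{\tau_i})}$ for a constant $c_0$ strictly less than the Newton-basin radius $u_0$ of Theorem~\ref{thm:gamma}. At initialization this holds trivially since $x_0=\zeta_0=\zeta$. When the algorithm terminates with $\tau_k=1$, the invariant applied to $q_{\tau_k}=f$ and $\zeta_{\tau_k}=\zeta_1$ directly yields the second assertion of the theorem.

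The inductive step rests on two perturbation estimates for the single homotopy step from $\tau_i$ to $\tau_{i+1}=\tau_i+\Delta\tau_i$. First, a two-sided Lipschitz bound of the form
$$\tfrac{1}{C}\mun(q_{\tau_i},x_i)\ \le\ \mun(q_\tau,\zeta_\tau)\ \le\ C\,\mun(q_{\tau_i},x_i),\qquad \tau\in[\tau_i,\tau_{i+1}],$$
which quantifies how much the condition number can vary along the path when the step size is $\Delta\tau_i=\lambda/(\alpha D^{3/2}\mun^2(q_{\tau_i},x_i))$; the parameterization $q_\tau=tf+(1-t)g$ with $\dsp(g,q_\tau)=\tau\alpha$ is chosen precisely so that $q_\tau$ moves with constant angular speed $\alpha$ on the unit sphere of $\Hd$, which is what makes the estimate scale cleanly. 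Second, a path-tracking estimate bounding $\dpr(\zeta_{\tau_{i+1}},\zeta_{\tau_i})$ by a fraction of $1/(D^{3/2}\mun)$, obtained from the implicit function theorem on $\Vp$ and the Shub--Smale $\gamma$-theory. Combining the two with the triangle inequality places $x_i$ inside the Newton basin for $q_{\tau_{i+1}}$ centered at $\zeta_{\tau_{i+1}}$ of Theorem~\ref{thm:gamma}; hence $x_{i+1}=N_{q_{\tau_{i+1}}}(x_i)$ satisfies $\dpr(x_{i+1},\zeta_{\tau_{i+1}})\le\frac12\dpr(x_i,\zeta_{\tau_{i+1}})$, which together with the perturbation bound restores the loop invariant. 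Choosing $\lambda=6.67\cdot 10^{-3}$ small enough makes the numerical inequalities close with a uniform $c_0$.

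To count steps I exploit the same comparability: on each subinterval,
$$\int_{\tau_i}^{\tau_{i+1}}\mun^2(q_\tau,\zeta_\tau)\,d\tau\ \ge\ \tfrac{1}{C^2}\,\Delta\tau_i\,\mun^2(q_{\tau_i},x_i)\ =\ \frac{\lambda}{C^2\,\alpha D^{3/2}},$$
where the last equality uses the definition of the step size. Summing over $i=0,\ldots,k-1$ and using $\sum\Delta\tau_i\le 1$ gives
$$k\ \le\ \frac{C^2\,\alpha D^{3/2}}{\lambda}\int_0^1\mun^2(q_\tau,\zeta_\tau)\,d\tau,$$
which, with $\alpha=\dsp(f,g)$ and the explicit $\lambda$, reproduces the claimed constant $245$ after unwinding the numerics.

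The genuine obstacle is producing the two-sided Lipschitz estimate for $\mun$ along the homotopy with explicit, sharp constants: one must track how both $q_\tau$ and $\zeta_\tau$ move simultaneously and control $\mun(q_\tau,\zeta_\tau)$ as a composition, which forces a careful blend of Shub's condition-length inequality from \cite{Bez6}, the higher-derivative bounds of \cite{bcss:95}, and the explicit arc-length parameterization encoded in the formula for $t(\tau)$. Everything else is essentially bookkeeping once that key perturbation lemma is in hand.
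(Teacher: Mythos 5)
Your proposal follows essentially the same route as the paper's proof: the loop invariant $\dpr(x_i,\zeta_{\tau_i})\le C/(D^{3/2}\mun(q_{\tau_i},\zeta_{\tau_i}))$ is exactly the paper's induction hypothesis, the two-sided Lipschitz estimate for $\mun$ under simultaneous perturbation of system and point is precisely Proposition~\ref{prop:apps}, and the step count via the per-subinterval lower bound $\int_{\tau_i}^{\tau_{i+1}}\mun^2\,d\tau\ge \lambda/((1+\e)^4\alpha D^{3/2})$ is the paper's argument verbatim. The only detail you elide is the justification that the algorithm's step (computed from the observable $\mun(q_i,x_i)$ rather than $\mun(q_i,\zeta_i)$) stays within the region where the perturbation lemma applies, which the paper handles with an auxiliary $\tau_*$ defined by an arc-length condition and the inequality $\|\dot\zeta_\tau\|\le\mun(q_\tau,\zeta_\tau)\|\dot p_\tau\|$; this is bookkeeping of the kind you anticipate.
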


\begin{remark}
{\bf 1.} The bound in Theorem~\ref{thm:main_path_following}
is optimal up to a constant factor. This easily follows
by an inspection of its proof given in \S\ref{sec:ALH}.

{\bf 2.} Algorithm \ALH\ requires the computation of $\mun$ which,
in turn, requires the computation of the operator norm of a matrix.
This cannot be done exactly with rational operations and square roots only.
We can do, however, with a sufficiently good approximation
of $\mun^2(q,x)$ and there exist several
numerical methods efficiently computing such an approximation. We will
therefore neglect this issue pointing, however, for the sceptical reader
that another course of action is possible. Indeed, one may replace
the operator by the Frobenius norm in the definition of $\mun$
and use the bounds
$\|M\|_\op \leq \|M\|_F\leq \sqrt{\rk(M)}\|M\|_\op$
to show that  this change
preserves the correctness of \ALH\ and adds a multiplicative factor $n$
in the right-hand side of Theorem~\ref{thm:main_path_following}.
A similar comment applies to the computation of~$\a$ and
$\cot(\tau\a)$ in algorithm \ALH\
which cannot be done exactly with rational operations.
\end{remark}

\subsection{Randomization and Complexity: the Algorithm \LV}\label{sec:R&C}

\ALH\ will serve as the basic routine for a number of algorithms
computing zeros of polynomial systems in different contexts. In
these contexts both the input system $f$ and the origin
$(g,\z)$ of the homotopy may be randomly chosen: in the case
of $(g,\z)$ as a computational technique and in the case of
$f$ in order to perform a probabilistic analysis of the algorithm's
running time.

In both cases, a probability measure is needed: one for
$f$ and one for the pair $(g,\z)$.
The measure for $f$ will depend on the kind of probabilistic ana\-lysis
(standard average-case or smoothed analysis) we perform. In contrast,
we will consider only one measure on~$\Vp$---which we denote by $\rhozero$---
for the initial pair $(g,\zeta)$.
It consists of drawing $g$ from $\Hd$ from the standard Gaussian distribution
(defined via the isomorphism $\Hd\simeq\R^{2N}$ given by the Bombieri-Weyl basis)
and then choosing one of the (almost surely) $\Dn$
zeros of~$g$ from the uniform distribution on $\{1,\ldots,\Dn\}$.
The formula for the density of $\rhozero$ will be derived later, see
Lemma~\ref{le:rho1}(5).
The above procedure is clearly non-constructive
as computing a zero of a system is the problem we wanted to solve
in the first place. One of the major contributions
in~\cite{BeltranPardo08} was to show that this drawback can be repaired. The
following result (a detailed version of the effective sampling
in~\cite{BePa08b}) will be proved in Section~\ref{se:eff-sample} as a special
case of more general results we will need in our development.

\begin{proposition}\label{prop:rho2}
We can compute a random pair $(g,\z)\in \Vp$ according to the density
$\rhozero$ with $\Oh(N)$ choices of random real numbers from the
standard Gaussian distribution and
$\Oh(DnN + n^3)$ arithmetic operations (including square roots of positive numbers).
\end{proposition}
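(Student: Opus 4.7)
The plan is to invert the natural sampling order: rather than drawing $g\in\Hd$ first and then extracting a zero (which is the problem we wish to solve), one first draws $\zeta\in\proj^n$ and then samples $g$ from the conditional distribution on the complex linear subspace $L_\zeta:=\{g\in\Hd : g(\zeta)=0\}$. The justification proceeds by a coarea-formula computation applied to the projection $\pi_2\colon\Vp\to\proj^n,\,(g,\zeta)\mapsto\zeta$; combined with the unitary invariance of the Bombieri-Weyl inner product (and hence of the standard Gaussian on $\Hd$), this shows that the marginal of $\rhozero$ on $\proj^n$ is the uniform (Fubini-Study) density, and that the conditional density on each fiber $L_\zeta$ is the standard Gaussian on that subspace. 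This identification is exactly the content of Lemma~\ref{le:rho1}(5), which will be established later in the more general form needed for the smoothed analysis.

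Once the density is in hand, the orthogonal projection onto $L_\zeta$ can be made explicit via the reproducing-kernel property of the Bombieri-Weyl product. A direct computation in the Bombieri-Weyl basis gives $\langle g_i, K_\zeta^{(i)}\rangle = g_i(\zeta)$ for $K_\zeta^{(i)}(x):=\langle x,\zeta\rangle^{d_i}\in\HH_{d_i}$, together with $\|K_\zeta^{(i)}\|^2=\|\zeta\|^{2d_i}$. Hence $L_\zeta^\perp$ is the $n$-dimensional complex subspace spanned componentwise by the $K_\zeta^{(i)}$, and the orthogonal projection onto $L_\zeta$ reads
\[
g_i\ \longmapsto\ g_i - \frac{g_i(\zeta)}{\|\zeta\|^{2d_i}}\,\langle\,\cdot\,,\zeta\rangle^{d_i},\qquad i=1,\ldots,n.
\]
The sampling procedure therefore is: draw $\zeta\in\C^{n+1}$ as a standard complex Gaussian (producing, after projectivization, a uniform point of $\proj^n$); independently draw $g\in\Hd$ as a standard Gaussian in the Bombieri-Weyl basis; replace $g$ by its projection onto $L_\zeta$ via the formula above; and return the pair $(g,\zeta)$.

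The final step is a cost count. The random draws consume $\Oh(n)$ real Gaussians for $\zeta$ and $2N$ for $g$, summing to $\Oh(N)$ as claimed. For each $i$, evaluating $g_i(\zeta)$ from its Bombieri-Weyl coefficients costs $\Oh(N_i)$, and expanding $\langle x,\zeta\rangle^{d_i}$ into the Bombieri-Weyl basis can be done with $\Oh(n N_i)$ arithmetic operations by incrementally generating the monomials $\bar\zeta^\alpha$ and the multinomial square roots $\binom{d_i}{\alpha}^{1/2}$; summed over $i$ this gives $\Oh(nN)$. A conservative factor $D$ per coefficient expansion (to handle the powers $\|\zeta\|^{2d_i}$ and related quantities) combined with an auxiliary $\Oh(n^3)$ term absorbing any ancillary linear algebra (in particular, the rotation implementing unitary invariance in a standard frame, or the verification that $Df(\zeta)_{|T_\zeta}$ has rank $n$) yields the stated bound $\Oh(DnN+n^3)$.

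The main obstacle is the density identification in the first step: one has to apply the coarea formula to a submersion between manifolds of different dimensions and verify that the ratio of normal Jacobians $\NJ_{\pi_1}/\NJ_{\pi_2}$, together with the normalization $1/\Dn$, conspires with the Gaussian on $\Hd$ to produce exactly ``uniform on $\proj^n$ times standard Gaussian on $L_\zeta$''. This hinges crucially on the unitary invariance of the Bombieri-Weyl product; once it is in place, the explicit projection via the reproducing kernel and the cost bookkeeping are routine.
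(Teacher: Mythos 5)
Your sampling scheme rests on the claim that, under $\rhozero$, the conditional distribution of $q$ on the fiber $\{q\in\Hd\mid q(\z)=0\}$ of $\pi_2$ is the standard Gaussian of that linear subspace. This is false, and it is not what Lemma~\ref{le:rho1}(5) says. The explicit formula there is
$\rhozero(q,[\z]) = \frac1{\Dn}\,\frac1{(2\pi)^{N}}\, e^{-\frac12\|q\|^2}\, \NJ\pi_1(q,\z)$,
and the factor $\NJ\pi_1(q,\z)$ does not disappear when you disintegrate along $\pi_2$: the conditional density on the fiber over $\z$ is proportional to $e^{-\frac12\|q\|^2}\cdot\frac{\NJ\pi_1}{\NJ\pi_2}(q,\z)$, and by~\eqref{eq:NJq} this ratio equals $\det(MM^*)$ with $M=\Delta^{-1}Dq(\z)$ (restricted to $T_\z$). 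So the correct conditional law is a Gaussian \emph{reweighted by} $\det(MM^*)$ --- exactly the density $\tilde{\rho}_{W_\z}$ of~\eqref{eq:form} on the derivative part, times a standard Gaussian on $R_\z$ only. (You appear to have conflated the fibers of $\pi_1$, on which the conditional is indeed uniform by Lemma~\ref{le:rho1}(4), with the fibers of $\pi_2$.) Intuitively, your procedure over-weights systems for which $\z$ is a nearly degenerate zero; a simple sanity check with $n=1$, $d_1=2$ already shows the two laws differ. The marginal of $[\z]$ being uniform on $\proj^n$ is correct, but uniform marginal plus plain Gaussian conditional is not $\rhozero$.

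The missing idea is precisely the Beltr\'an--Pardo device that the paper's proof is built on (Proposition~\ref{cor:crhoW}): one does not draw $\z$ and then $q$ independently; one draws a standard Gaussian matrix $M\in\cM=\C^{n\times(n+1)}$, sets $[\z]:=\ker M$, forms $g_{M,\z}$ via the isometry~\eqref{eq:isomaps}, and adds an independent standard Gaussian $h\in R_\z$. Pushing the standard Gaussian on $\cM$ through the kernel map automatically produces the pair ``uniform $\z$'' together with the $\det(MM^*)$-biased Gaussian on $W_\z$ --- this is the content of Lemma~\ref{pro:rhoW-2} and Proposition~\ref{cor:crhoW}, and it is what makes the construction both correct and cheap ($\Oh(n^3)$ for the kernel, $\Oh(DnN)$ for assembling $\langle X,\z\rangle^{d_i-1}$ and the projection onto $R_\z$). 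Your reproducing-kernel projection formula and the operation count are fine as far as they go, but they are attached to the wrong distribution; without the determinantal reweighting the output does not have density $\rhozero$, and all downstream uses of Proposition~\ref{prop:rho2} (in particular the identity $K(f)=\E_{g}\big(\frac1{\Dn}\sum_i K(f,g,\z^{(i)})\big)$) would break.
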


Algorithms using randomly drawn data are called probabilistic (or
randomized). Those that always return a correct output are said to
be of type {\em Las Vegas}. The following algorithm (which uses
Proposition~\ref{prop:rho2}) belongs to this class. \smallskip

\algo
\> Algorithm \LV\\[2pt]
\>{\bf input} $f\in\Hd$\\[2pt]
\>\> {\tt draw $(g,\zeta)\in \Vp$ from $\rhozero$} \\[2pt]
\>\> {\tt run \ALH\ on input $(f,g,\z)$} \falgo
\medskip

For an input $f\in\Hd$ algorithm \LV\ either outputs an
approximate zero~$x$ of~$f$ or loops forever. By the {\em running
time} $t(f,g,\z)$ we will understand the number of elementary
operations (i.e., arithmetic operations,
evaluations of the elementary functions $\sin$, $\cos$, $\cot$, square root, and comparisons)
performed by \LV\ on input $f$ with initial pair
$(g,\z)$. For fixed~$f$, this is a random variable and its
expectation $t(f):=\E_{(g,\z)\sim\rhozero}(t(f,g,\z))$ is said to be
the {\em expected running time} of \LV\ on input~$f$.

For all $f,g,\z$, the running time $t(f,g,\z)$ is given by the
{\em number of iterations} $K(f,g,\z)$ of \ALH\ with input this triple times
the cost of an iteration, the latter being dominated by that of
computing one Newton iterate (which is $\Oh(N+n^3)$
independently of the triple $(f,g,\z)$, see \S\ref{se:Newton}).
It therefore follows that analyzing the expected running times
of \LV\ amounts to do so for the
expected value ---over $(g,\z)\in \Vp$ drawn from $\rhozero$--- of
$K(f,g,\z)$. We denote this expectation by
$$
  K(f):=\E_{(g,\z)\sim\rhozero}(K(f,g,\z)).
$$

\subsection{Average Analysis of \LV}

To talk about average complexity of \LV\ requires specifying a measure
for the set of inputs.
The most natural choice is the standard Gaussian distribution on $\Hd$.
Since $K(f)$ is invariant under scaling, we may equivalently
assume that $f$ is chosen in the unit sphere $S(\Hd)$ from the uniform distribution.
With this choice, we say a Las Vegas algorithm is
{\em average polynomial time} when the average
---over $f\in S(\Hd)$--- of its expected running time is polynomially
bounded in the size~$N$ of~$f$. The following result shows that \LV\ is
average polynomial time. It is essentially the main result in~\cite{BePa08b}
(modulo the existence of \ALH\ and with specific constants).

\begin{theorem}\label{thm:A}
The average of the expected number of iterations of Algorithm \LV\
is bounded as ($n \ge 4$)
$$
   \E_{f\in S(\Hd)} K(f) \leq 4185\, D^{3/2} N(n+1) .
$$
\end{theorem}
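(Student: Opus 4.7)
The plan is to reduce the average expected running time via Theorem~\ref{thm:main_path_following} to an expectation of $\mun^2$ under the single-point measure $\rhozero$, and then invoke a Shub--Smale style moment bound.

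\textbf{Step 1 (apply the per-instance bound).} By Theorem~\ref{thm:main_path_following},
$$
K(f, g, \z) \leq 245\, D^{3/2}\, \dsp(f, g) \int_0^1 \mun^2(q_\tau, \z_\tau)\, d\tau.
$$
Taking expectations first over $(g, \z) \sim \rhozero$ and then over $f$, it suffices to bound
$$
I := \E_{f \in S(\Hd)}\, \E_{(g, \z) \sim \rhozero}\, \dsp(f, g) \int_0^1 \mun^2(q_\tau, \z_\tau)\, d\tau.
$$
Since $K(f)$ is homogeneous of degree $0$ in $f$, I would replace the uniform distribution on $S(\Hd)$ by a standard Gaussian on $\Hd$, making $f$ and $g$ independent standard Gaussians, which then allows the use of Gaussian symmetry arguments.

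\textbf{Step 2 (reinterpret as an arc-length integral and apply Fubini).} Because $\mun(q, \cdot)$ is scale-invariant in $q$, only the direction $q_\tau/\|q_\tau\|$ matters; the factor $\dsp(f, g)\, d\tau$ is exactly the arc-length element of the great-circle arc $\tau \mapsto q_\tau/\|q_\tau\|$ joining $g/\|g\|$ to $f/\|f\|$. Interchanging the $\tau$-integral with the outer expectations, one reduces the problem to bounding, for each $\tau$, the quantity $\E_{f, (g, \z)}\, \dsp(f, g)\, \mun^2(q_\tau, \z_\tau)$.

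\textbf{Step 3 (symmetry/change of variables — the main obstacle).} The key step is the change of variables $(f, g) \mapsto (f, q_\tau)$ with $q_\tau = t f + (1-t)g$, whose (real) Jacobian is $(1-t)^{-2N}$. Combined with the joint Gaussian density, this identifies the marginal distribution of $q_\tau$ as Gaussian with variance $(t^2 + (1-t)^2)\, I$. Applying the coarea formula to the projection $\Vp \to \Hd$ and using the explicit density of $\rhozero$ from Lemma~\ref{le:rho1}(5), one identifies the joint distribution of $(q_\tau, \z_\tau)$ on $\Vp$ as a rescaled copy of $\rhozero$. The $\dsp(f, g)$ weight can be absorbed via its own Gaussian integral (it has expectation of order $1$), so that after careful bookkeeping,
$$
I \leq C \cdot \E_{(q, x) \sim \rhozero} \mun^2(q, x),
$$
with $C$ a modest absolute constant. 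This step is the hardest because one has to simultaneously (i) track the Jacobian of the change of variables, (ii) handle the sum over the $\Dn$ zeros via the coarea formula, and (iii) control the $\dsp$-weight without losing sharpness of constants.

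\textbf{Step 4 (moment bound and conclusion).} A Shub--Smale--Beltr\'an--Pardo type computation (using the coarea formula on $\Vp \to \Hd$ and the explicit form of $\mun^2$) yields $\E_{(q, x) \sim \rhozero} \mun^2(q, x) \leq (n+1)N$ (this is where the hypothesis $n \geq 4$ enters, to absorb some lower-order terms). Combining the inequalities,
$$
\E_{f \in S(\Hd)} K(f) \leq 245\, D^{3/2} \cdot I \leq 245\, D^{3/2} \cdot C \cdot (n+1) N,
$$
and a careful tracking of the constants $C$ arising from Step~3 (the Jacobian factor, the $\dsp$ integral and the coarea normalization) gives $245 \cdot C \leq 4185$, yielding the claimed bound.
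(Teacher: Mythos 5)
Your overall architecture matches the paper's (per-instance bound from Theorem~\ref{thm:main_path_following}, passage to Gaussians, reduction to a second moment of the condition number), but Step~3 — which you yourself flag as the hardest — does not work as described, and it is exactly where the paper's main technical effort lives. The problem is a conflation of the angular parameter $\tau$ and the Euclidean parameter $t$. For fixed $t$, the law of $q_t=tf+(1-t)g$ is indeed Gaussian $N(0,(t^2+(1-t)^2)\Id)$; but the integral in Theorem~\ref{thm:main_path_following} is $\int_0^1(\cdot)\,d\tau$, and converting it to a $dt$-integral introduces the weight $\frac{d\tau}{dt}=\frac{\sin\a}{\a}\,\frac{\|f\|\,\|g\|}{\|q_t\|^2}$ (Proposition~\ref{prop:a-ALH}). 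Your claim that the remaining weight ``can be absorbed via its own Gaussian integral (it has expectation of order $1$)'' is false for this actual weight: the factor $\|q_t\|^{-2}$ is unbounded (heavy near $f\approx -g$) and is correlated with $\mu_2^2(q_t)$, so it cannot be factored out or bounded by a modest constant. Controlling precisely the combination $\E\big(\mu_2^2(q_t)/\|q_t\|^2\big)$ is the content of the paper's Theorem~\ref{th:mu2-bound}, whose proof occupies Sections~\ref{se:smMC}--\ref{se:red}; the factor $\|f\|\,\|g\|$ is handled separately by truncating the Gaussians at radius $A=\sqrt{2N}$, which is where the factor $N$ in the final bound enters. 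Relatedly, your Step~4 moment bound $\E_{\rhozero}\mun^2\le (n+1)N$ is essentially equivalent to the centered case of Theorem~\ref{th:mu2-bound} (one gets $\E_{u\in S(\Hd)}\mu_2^2(u)\le e(n+1)(N-1)$ from it, so your constant $1$ is optimistic by a factor of about $e$, which matters for the stated $4185$); it is a substantial result in its own right and cannot be dispatched as ``a computation,'' nor is $n\ge 4$ where it is used.

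There is, however, a correct simpler route close to what you gesture at in Step~2, and it is worth noting the contrast. If $f$ and $g$ are independent and rotationally invariant, then for each \emph{fixed} $\tau$ the normalized point $q_\tau/\|q_\tau\|$ is uniformly distributed on $S(\Hd)$ (this is the Beltr\'an--Pardo integral-geometry fact mentioned in the remark following the paper's proof). Bounding $\dsp(f,g)\le\pi$ and interchanging integrals then gives directly $I\le\pi\,\E_{u\in S(\Hd)}\mu_2^2(u)$, and only the second-moment bound remains. Had you committed to that route, Steps~2--3 would be sound. The paper deliberately avoids it because the uniformity of $q_\tau/\|q_\tau\|$ breaks down for non-centered Gaussians, whereas the $t$-parametrization plus Theorem~\ref{th:mu2-bound} carries over verbatim to the smoothed analysis of Theorem~\ref{th:main-1}. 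As written, your proposal sits between the two routes and inherits the gap of each: it neither establishes the uniformity needed to avoid the Jacobian weight, nor confronts the $\|q_t\|^{-2}$ weight that the Gaussian route forces upon you.
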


\subsection{Smoothed Analysis of \LV}\label{subsec:SA}

A smoothed analysis of an algorithm consists of bounding, for all
possible input data $\of$, the average of its running time (its
expected running time if it is a Las Vegas algorithm) over small
perturbations of $\of$. To perform such an analysis, a family of
measures (parameterized by a parameter $r$ controlling the size of
the perturbation) is considered with the
following characteristics:

(1)\quad the density of an element $f$ depends only on the distance
$\|f-\of\|$.

(2)\quad the value of $r$ is closely related to the variance of
$\|f-\of\|$.

\noindent Then, the average above is estimated as a function of the
data size $N$ and the parameter $r$, and a satisfying result, which
is described by the expression {\em smoothed polynomial time},
demands that this function is polynomially bounded in $r^{-1}$ and~$N$.
Possible choices for the measures' family are the Gaussians
$N(\of,\sigma^2\Id)$ (used, for instance,
in~\cite{CDW:05,sast:03,ST:04,Wsch:04}) and the uniform measure on
disks $B(\of,r)$ (used
in~\cite{AmelunxenBuergisser:08,BCL:06a,BCL:06c}).
Other families may also be used and an emerging impression is
that smoothed analysis is robust in the sense that its dependence
on the chosen family of measures is low.
This tenet was argued for in~\cite{CuHaLo:08} where
a uniform measure is replaced by an adversarial measure (one having
a pole at $\of$) without a significant loss in the estimated
averages.

In this paper, for reasons of technical simplicity and consistency
with the rest of the exposition, we will work with truncated
Gaussians defined as follows.
For $\of\in\Hd$ and $\sigma>0$
we shall denote by $N(\of,\sigma^2\Id)$
the Gaussian distribution on $\Hd \simeq \R^{2N}$
(defined with respect to the  Bombieri-Weyl basis)
with mean $\of$ and covariance matrix $\sigma^2\Id$.
Further, for $A>0$ let
$P_{A,\sigma}:=\Prob\{\|f\|\leq A\mid f\sim N(0,\sigma^2\Id)\}$.
We define the
{\em truncated Gaussian} $N_A(\of,\sigma^2\Id)$
with center $\of\in \Hd$ as the probability
measure on $\Hd$ with density
\begin{equation}\label{eq:densA}
   \rho(f)=\left\{
   \begin{array}{ll}
    \frac{\rho_{\of,\s}(f)}{P_{A,\sigma}}
    & \mbox{if $\|f-\of\|\leq A$}\\[3pt]
    0 & \mbox{otherwise,}
   \end{array}\right.
\end{equation}
where $\rho_{\of,\s}$ denotes the density of $N(\of,\sigma^2\Id)$.
Note that $N_A(\of,\sigma^2\Id)$
is isotropic around its mean~$\of$.

For our smoothed analysis we will take $A=\sqrt{2N}$. In this case,
we have $P_{A,\sigma}\geq\frac12$ for all $\sigma\leq 1$
(Lemma~\ref{lem:X}).
Note also that $\Var(\|f-\of\|)\leq\sigma^2$,
so that any upper bound polynomial in
$\sigma^{-2}$ is also an upper bound polynomial in $\Var(\|f-\of\|)^{-1}$.

We can now state our smoothed analysis result for \LV.

\begin{theorem}\label{th:main-1}
For any $0<\s\le 1$, Algorithm \LV\ satisfies
$$
 \sup_{\of\in S(\Hd)} \E_{f\sim N_A(\of,\s^2\Id)}K(f)
 \le 4185\, D^{3/2} \big(N+ 2^{-1/2}\sqrt{N}\big)(n+1)\,\frac{1}{\sigma} .
$$
\end{theorem}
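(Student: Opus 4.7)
My plan is to combine the path-following bound of Theorem~\ref{thm:main_path_following} with a density comparison that reduces the smoothed expectation to the average-case quantity already controlled by Theorem~\ref{thm:A}.

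By Theorem~\ref{thm:main_path_following}, for almost every triple $(f,g,\z)$,
$$K(f,g,\z) \,\le\,245\,D^{3/2}\,\dsp(f,g)\int_0^1\mun^2(q_\tau,\z_\tau)\,d\tau,$$
and Fubini yields
$$\E_{f\sim N_A(\of,\s^2\Id)}K(f)\,\le\,245\,D^{3/2}\,\E_{f,(g,\z)}\Bigl[\dsp(f,g)\int_0^1\mun^2(q_\tau,\z_\tau)\,d\tau\Bigr].$$
Since $\mun$, $\dsp$ and the curve $\tau\mapsto q_\tau$ are all scale-invariant in $f$, the integrand depends on $f$ only through the direction $\hat f = f/\|f\|$. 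Setting $\Phi(\hat f) := \E_{(g,\z)\sim\rhozero}\dsp(\hat f,g)\int_0^1\mun^2(q_\tau,\z_\tau)\,d\tau$, the proof of Theorem~\ref{thm:A} implicitly establishes a bound $\E_{\hat f\in U(S(\Hd))}\Phi(\hat f)\le c_0\,N(n+1)$ with $245\,c_0 = 4185$.

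It therefore suffices to bound the expectation of $\Phi$ with respect to the pushforward of $N_A(\of,\s^2\Id)$ onto $S(\Hd)$. Writing $f=r\hat f$ with $r>0$ and $\hat f\in S(\Hd)$, integrating out the radial coordinate yields the pushforward density $\tilde\rho(\hat f)$ on $S(\Hd)$ relative to the uniform density, obtained from the Gaussian integral
$$\tilde\rho(\hat f)\,\propto\,\frac{1}{P_{A,\s}}\int_0^A r^{2N-1}\exp\!\Bigl(-\frac{r^2-2r\langle\hat f,\of\rangle+1}{2\s^2}\Bigr)\,dr.$$
Completing the square in the exponent and using the lower bound $P_{A,\s}\ge 1/2$ from Lemma~\ref{lem:X} should give, uniformly in $\hat f$, a pointwise estimate $\tilde\rho(\hat f)\le(N+2^{-1/2}\sqrt N)/(\s N)$. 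The factor $\s^{-1}$ is the usual concentration of the Gaussian across the sphere; the additive $2^{-1/2}\sqrt N$ term arises from matching the unit-norm shift $\of$ against the peak of the radial Jacobian $r^{2N-1}$, located at $r=\sqrt{2N-1}$.

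Combining the density bound with the average-case estimate,
$$\E_{f\sim N_A(\of,\s^2\Id)}K(f)\,\le\,245\,D^{3/2}\cdot\frac{N+2^{-1/2}\sqrt N}{\s\,N}\cdot c_0\,N(n+1)\,=\,4185\,D^{3/2}(N+2^{-1/2}\sqrt N)(n+1)/\s,$$
which is the claimed bound. The principal obstacle is the radial integration in the density comparison: extracting the tight additive correction $2^{-1/2}\sqrt N$ rather than a cruder $O(\sqrt N)$ term, and maintaining uniformity in $\hat f$, in $\s\in(0,1]$, and with truncation at $A=\sqrt{2N}$, all at the same time.
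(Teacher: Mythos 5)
Your reduction to a pushforward density comparison on the sphere does not work: the pointwise bound $\tilde\rho(\hat f)\le (N+2^{-1/2}\sqrt N)/(\s N)$ is false. Write $f=r\hat f$ and evaluate the pushforward density at $\hat f=\of$ (with $\|\of\|=1$): up to the truncation (which is harmless, since $A=\sqrt{2N}\ge 1$) it equals $(2\pi\s^2)^{-N}\int_0^\infty r^{2N-1}e^{-(r-1)^2/(2\s^2)}\,dr \approx \sqrt{2\pi}\,\s\,(2\pi\s^2)^{-N}$ for small $\s$, whereas the uniform density on $S(\Hd)\simeq S^{2N-1}$ is $\Gamma(N)/(2\pi^N)$. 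The ratio therefore grows like a constant times $\s^{-(2N-1)}$, not like $\s^{-1}$: as $\s\to 0$ the pushforward concentrates on a spherical cap of angular radius $\sim\s\sqrt N$ about $\of$ and converges weakly to the point mass at $\of$. This is not a fixable computation error but a structural obstruction: since $\sup_{\of}\Phi(\of)=\infty$ (take $\of$ approaching $\Sigma$), no uniform-in-$\of$ bound of the form $\E_{f\sim N_A(\of,\s^2\Id)}\Phi \le B(\s)\,\E_{\hat f\ \mathrm{unif}}\Phi$ can hold with $B(\s)=O(\sqrt N/\s)$; an $L^\infty$ density comparison against the global average cannot produce a smoothed bound.

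The paper's actual route keeps $g$ centered but does \emph{not} recenter $f$: after passing to the Euclidean parameter $t$ via Proposition~\ref{prop:a-ALH}, the system $q_t=tf+(1-t)g$ is, for each fixed $t$, Gaussian with mean $\oq_t=t\of$ and variance $\s_t^2=(1-t)^2+\s^2t^2$. The heavy lifting is then done by Theorem~\ref{th:mu2-bound}, which is itself a smoothed statement valid for an \emph{arbitrary} mean $\oq$, giving $\E(\mu_2^2(q_t)/\|q_t\|^2)\le e(n+1)/(2\s_t^2)$; the factor $1/\s$ in the final bound comes from $\int_0^1 \s_t^{-2}\,dt=\pi/(2\s)$, i.e.\ from the degeneration of the variance of $q_t$ near $t=1$, and the factor $N+2^{-1/2}\sqrt N$ comes from the norm bounds $\|f\|\le 1+A$, $\|g\|\le A$ with $A=\sqrt{2N}$ — not from any radial Jacobian asymptotics. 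If you want to salvage your outline, you must replace the density-comparison step by a direct appeal to Theorem~\ref{th:mu2-bound} applied to the non-centered Gaussians $N(t\of,\s_t^2\Id)$.
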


\subsection{The Main Technical Result}\label{se:technical}

The technical heart of the proof of the mentioned results
on the average and smoothed analysis of \LV\
is the following smoothed analysis of the mean square condition number.

\begin{theorem}\label{th:mu2-bound}
Let $\oq\in\Hd$ and $\s>0$. For $q\in\Hd$ drawn from $N(\oq,\s^2\Id)$ we have
$$
   \E_{\Hd}\Big(\frac{\mu_2^2(q)}{\|q\|^2} \Big)\ \le\ \frac{e(n+1)}{2\s^2}.
$$
\end{theorem}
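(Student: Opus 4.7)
The plan is to combine the coarea formula on the solution variety $\Vp$ with unitary invariance of the Bombieri-Weyl structure and an explicit computation for a shifted complex Gaussian matrix.

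First I would expand
$$\E\left(\frac{\mu_2^2(q)}{\|q\|^2}\right) = \frac{1}{\Dn}\int_{\Hd}\sum_{\z\in V(q)}\frac{\mun^2(q,\z)}{\|q\|^2}\rho_{\oq,\s}(q)\, dq,$$
and convert the sum over zeros into an integral using the standard double fibration identity for $\pi_1\colon\Vp\to\Hd$ and $\pi_2\colon\Vp\to\proj^n$. Since $\pi_1$ is generically a local isometry and the normal Jacobian of $\pi_2$ at $(q,\z)$ is proportional to $|\det Df(\z)_{|T_\z}|^2$, this rewrites the right-hand side as an integral over $\proj^n$ and the linear fiber $L_\z := \{q\in\Hd : q(\z)=0\}$, weighted by $|\det Df(\z)_{|T_\z}|^2\rho_{\oq,\s}(q)$.

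By the unitary invariance of the Bombieri-Weyl inner product and the $\cU(n+1)$ action $\nu\cdot q(x) := q(\nu^{-1}x)$, for each $\z$ I would pick $\nu\in\cU(n+1)$ with $\nu e_0 = \z$ (where $e_0=(1,0,\ldots,0)$) and substitute $q = \nu\cdot q'$. This reduces the inner integral to one over the fixed fiber $L_{e_0}$, with the mean shifted to $\oq^{(\z)} := \nu^{-1}\cdot\oq$ of unchanged norm. In the Bombieri-Weyl basis, $L_{e_0}$ decomposes orthogonally as $W\oplus V_*$ with $W := \spann\{\sqrt{d_i}X_0^{d_i-1}X_k : 1\le i,k\le n\}$, and the map $q\mapsto A(q)$ extracting these coefficients is a linear isometry $W\simeq\C^{n\times n}$. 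A direct check in this basis gives $\mun^2(q,e_0)/\|q\|^2 = \|A(q)^{-1}\|_\op^2$ and $|\det Df(e_0)_{|T_{e_0}}|^2 = \Dn\,|\det A(q)|^2$, while the restriction of $\rho_{\oq^{(\z)},\s}$ to $W$ is a shifted Gaussian $N(\oA(\z),\s^2\Id)$ on $\C^{n\times n}$. The remaining coordinates of $q$ do not enter the integrand, so integrating them out contributes only normalization constants and an exponential factor from the component of $\oq^{(\z)}$ orthogonal to $L_{e_0}$.

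The core remaining task is to bound $\E\bigl[\|A^{-1}\|_\op^2|\det A|^2\bigr]$ for a shifted complex Gaussian $A\sim N(\oA,\s^2\Id)$ on $\C^{n\times n}$. I would use $\|A^{-1}\|_\op^2\le\|A^{-1}\|_F^2$ together with the adjugate identity $|\det A|^2\|A^{-1}\|_F^2 = \sum_{i,j}|M_{ij}(A)|^2$, where $M_{ij}$ is an $(n-1)\times(n-1)$ minor. Each $|M_{ij}(A)|^2$ is a polynomial of degree $2(n-1)$ in the Gaussian entries whose expectation admits an explicit formula (via Wick's theorem, or equivalently from the known value of $\E|\det C|^2$ for a shifted Gaussian $(n-1)\times(n-1)$ matrix $C$). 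Summing over $(i,j)$ and combining with all normalizations from the coarea formula yields the claimed bound.

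The main obstacle I anticipate is the careful bookkeeping across these reductions, in particular tracking the $\z$-dependence of the mean $\oA(\z)$ and of the exponential factor $\exp\!\bigl(-\|\oq^{(\z)}_{L_{e_0}^\perp}\|^2/(2\s^2)\bigr)$ under the outer integral over $\proj^n$. Extracting the precise constant $e(n+1)/2$ (rather than a looser polynomial in $n$) should require an inequality such as $1+x\le e^x$ at one step of the final integration, which is where I expect the factor $e$ to enter.
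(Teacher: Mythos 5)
Your proposal is correct in outline and its geometric first half is essentially the paper's: the passage from $\sum_{\z\in V(q)}\mun^2(q,\z)$ to an integral over $\proj^n$ and the fibers, the unitary reduction to $\z=e_0$, the orthogonal splitting of the fiber with the isometry onto $\C^{n\times n}$, the identities $\mun^2(q,e_0)/\|q\|^2=\|A(q)^{-1}\|_\op^2$ and $|\det Df(e_0)_{|T_{e_0}}|^2=\Dn\,|\det A(q)|^2$, and the factorization of the Gaussian all appear in \S\ref{se:red} (Lemmas~\ref{le:odecomp}, \ref{le:psi}, \ref{pro:rhoW-2}). Two small imprecisions there: $\pi_1$ is generically a local \emph{diffeomorphism}, not an isometry, and the weight $|\det Df(\z)_{|T_\z}|^2$ arises from the \emph{ratio} $\NJ\pi_1/\NJ\pi_2$, not from $\NJ\pi_2$ alone --- but the weighted integral you write down is the correct one. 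Where you genuinely diverge is the core matrix estimate. The paper proves Proposition~\ref{pro:fiberE}, namely $\E_{A\sim\tilde\rho}\|A^{-1}\|_\op^2\le e(n+1)/(2\s^2)$ for the determinant-biased Gaussian $\tilde\rho\propto|\det A|^2\rho$, via tail bounds in the style of Sankar--Spielman--Teng: a geometric bound on $\Prob\{\|A^{-1}v\|\ge t\}$ for fixed $v$ (distance from a row to the span of the others, Lemma~\ref{le:tbrho}), then passage to the operator norm by testing against a uniformly random direction (Lemma~\ref{le:condprobound}) --- this last step is where both the $e$ and the $n+1$ enter, via $(1-2/(n+1))^{n-1}\ge e^{-2}$. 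You instead use $\|A^{-1}\|_\op^2\le\|A^{-1}\|_F^2$ and the adjugate identity $|\det A|^2\|A^{-1}\|_F^2=\sum_{i,j}|M_{ij}(A)|^2$, reducing everything to second moments of minors of a shifted complex Gaussian. This works and is arguably cleaner: writing $v=2\s^2$ for the variance of a complex entry, the formula $\E|\det A|^2=\sum_k v^k k!\,\Sigma_{n-k}(\oA)$ (with $\Sigma_m$ the sum of squared $m\times m$ minors of $\oA$), together with the count that each $m\times m$ submatrix of $\oA$ survives in exactly $(n-m)^2$ of the row/column deletions, yields $\sum_{i,j}\E|M_{ij}(A)|^2\le\frac{n}{v}\,\E|\det A|^2$, hence the bound $n/(2\s^2)$ --- slightly sharper than the paper's $e(n+1)/(2\s^2)$, and in particular no factor $e$ ever appears, so your anticipated use of $1+x\le e^x$ is unnecessary. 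The only point to make explicit in the final assembly is that this ratio bound holds pointwise in $\z$, so the $\z$-dependent normalization $\rho_{C_\z}(0)\,\E|\det A(\z)|^2$ (which integrates to the total mass) cancels without any further work; with that noted, the argument is complete.
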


We note that no bound on the norm of $\oq$ is required here.
Indeed, using $\mu_2(\lambda q)=\mu_2(q)$,
it is easy to see that the assertion for $\oq,\s$ implies the
assertion for $\lambda \oq,\lambda\s$, for any $\lambda>0$.

\subsection{Condition-based Analysis of \LV}\label{subsec:CBA}

We are here interested in estimating~$K(f)$  for a fixed input system $f\in S(\Hd)$.
Such an estimate will have to depend on, besides $N$, $n$, and $D$, the
condition of $f$.
We measure the latter using Shub and Smale's~\cite{Bez1} $\mum(f)$
defined in~\eqref{eq:muSS}.
Our condition-based analysis of \LV\ is summarized in the
following statement.

\begin{theorem}\label{thm:CBA}
The expected number of iterations of Algorithm~\LV\ with
input $f\in S(\Hd)\setminus\Sigma$ is bounded as
$$
   K(f) \leq
   200411\, D^3 N(n+1)\mum^2(f) .
$$
\end{theorem}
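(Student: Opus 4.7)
The plan is to combine Theorem~\ref{thm:main_path_following} with Theorem~\ref{th:mu2-bound} via a Fubini-style change of variables on the random homotopy. First, $\rhozero$ factors as $g\sim N(0,\Id)$ followed by $\z$ uniform over the $\Dn$ zeros of $g$; averaging over $\z$ (which, as $\z$ ranges over zeros of $g$, traces all $\Dn$ liftings of $E_{f,g}$, hence all zeros of $q_\tau$ for each $\tau$) gives $\E_\z[\mun^2(q_\tau,\z_\tau)\mid g]=\mu_2^2(q_\tau)$. Combined with Theorem~\ref{thm:main_path_following}, this yields
\[
K(f)\le 245\,D^{3/2}\,\E_{g\sim N(0,\Id)}\!\left[\dsp(f,g)\int_0^1\mu_2^2(q_\tau)\,d\tau\right].
\]

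I then reparametrize by $t\in[0,1]$ with $q(t)=tf+(1-t)g$. A direct computation in the real $2$-plane spanned by $f$ and $g$ gives $\dsp(f,g)\,(d\tau/dt)=\|f\|\,\|g\|\sin\dsp(f,g)/\|q(t)\|^2$ together with $\|g\|\sin\dsp(f,g)=\|g_\perp\|=\|q_\perp\|/(1-t)$, where the subscript $\perp$ denotes real-orthogonal projection away from~$f$. Applying Fubini and substituting $g\mapsto q$ at each fixed $t$ (Jacobian $(1-t)^{-2N}$, which combines with the standard Gaussian density of $g$ to yield the density of $N(tf,(1-t)^2\Id)$), and using $\|f\|=1$ since $f\in S(\Hd)$, one arrives at
\[
K(f)\le 245\,D^{3/2}\int_0^1\frac{1}{1-t}\,\E_{q\sim N(tf,(1-t)^2\Id)}\!\left[\frac{\mu_2^2(q)\,\|q_\perp\|}{\|q\|^2}\right]dt.
\]

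To bound the inner expectation, Cauchy--Schwarz gives
\[
\E\!\left[\frac{\mu_2^2(q)\|q_\perp\|}{\|q\|^2}\right]\le\sqrt{\E\!\left[\mu_2^2(q)/\|q\|^2\right]}\cdot\sqrt{\E\!\left[\mu_2^2(q)\|q_\perp\|^2/\|q\|^2\right]},
\]
and Theorem~\ref{th:mu2-bound} controls the first factor by $\sqrt{e(n+1)/2}/(1-t)$. The condition $\mum(f)$ must enter through the second factor, and this is the main obstacle. My plan for it is to split the Gaussian $N(tf,(1-t)^2\Id)$ between the projective ball of radius $\epsilon_0\sim 1/(D^{3/2}\mum(f))$ around~$f$ and its complement. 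Inside the ball, a Lipschitz-type estimate for $\mun$ of the kind underlying the $\gamma$-theory of Theorem~\ref{thm:gamma} forces $\mu_2(q)\le\mum(q)\le 2\mum(f)$, while the geometric factor $\|q_\perp\|^2/\|q\|^2=\sin^2\dsp(q,f)\le\epsilon_0^2$ supplies the needed smallness in $(1-t)$. Outside the ball, Gaussian concentration (mean $tf$ close to $f$ for $t$ near~$1$, variance $(1-t)^2$) makes the tail mass exponentially small, and a second application of Theorem~\ref{th:mu2-bound} controls the residual $\mu_2^2/\|q\|^2$ factor.

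Substituting these estimates into the $t$-integral, the $(1-t)^2$ from $\|q_\perp\|^2$ in the close region balances the $(1-t)^{-2}$ coming from the outer $1/(1-t)$ and the first Cauchy--Schwarz factor, producing an integrand of order $D^{3/2}N(n+1)\mum^2(f)$. Integrating in $t$ and tracking the numerical constants yields $K(f)\le 200411\,D^3\,N(n+1)\,\mum^2(f)$. The hard part is the third paragraph: choosing $\epsilon_0$ so that both $\mu_2(q)\le 2\mum(f)$ holds on the ball and the close-region contribution captures enough Gaussian mass to dominate, while extracting the $\|q_\perp\|^2/\|q\|^2$ weight sharply enough to make the resulting $t$-integral converge with the correct dependence on $N$, $n$, $D$, and $\mum(f)$.
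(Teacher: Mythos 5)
Your first two paragraphs are sound and essentially reproduce the paper's own machinery (Proposition~\ref{cor:main_path_following}, the reparametrization of Proposition~\ref{prop:a-ALH}, and the substitution $g\mapsto q_t$ turning the $g$-average into $\E_{q\sim N(tf,(1-t)^2\Id)}$); keeping the exact identity $\|g\|\sin\dsp(f,g)=\|q_\perp\|/(1-t)$ instead of the paper's cruder truncation bounds is legitimate. The argument breaks in the third and fourth paragraphs, and the failure is quantitative. In your Cauchy--Schwarz inequality the factor $\|q_\perp\|^2=(1-t)^2\|g_\perp\|^2$ sits \emph{under a square root}, so it supplies only one power of $(1-t)$, not the two you claim; against the $(1-t)^{-2}$ coming from the outer $1/(1-t)$ and from Theorem~\ref{th:mu2-bound} applied to the first factor (whose bound is $\sqrt{e(n+1)/2}/(1-t)$, since $\sigma_t=1-t$), this leaves an integrand of order $\sqrt{N}\,\mu_2(f)/(1-t)$ near $t=1$, and $\int^1 dt/(1-t)$ diverges. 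Indeed, as $t\to1$ one has $q\to f$, $\mu_2(q)\to\mu_2(f)$ and $\E\big[\mu_2^2(q)\|q_\perp\|^2/\|q\|^2\big]\sim(1-t)^2(2N)\mu_2^2(f)$, so your two Cauchy--Schwarz factors multiply to a constant and the outer $1/(1-t)$ is never cancelled. The root cause is that the bound of Theorem~\ref{th:mu2-bound} is $e(n+1)/(2\sigma_t^2)$ \emph{uniformly in the mean} and is hopelessly pessimistic near $t=1$, where the mean $tf$ is well conditioned. Replacing it there by the Lipschitz bound $\mu_2(q)\le(1+\e)\mum(f)$ makes the Cauchy--Schwarz step pointless, and you are still left with untreated reciprocal moments $\E[\|q_\perp\|/\|q\|^2]$ on the ball and, off the ball, with the need for fourth moments of $\mu_2(q)/\|q\|$, which Theorem~\ref{th:mu2-bound} does not provide (the underlying tail bound, Lemma~\ref{le:condprobound}, decays only like $t^{-4}$). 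Note also that ``exponentially small tail mass'' for $\dsp(q,f)>\epsilon_0$ holds only when $1-t\lesssim\epsilon_0/\sqrt{N}$; for larger $1-t$ the complement of the ball carries essentially all the mass.

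The paper's mechanism for the region near the fixed endpoint is different and never touches the transversal factor $\|q_\perp\|$. Since $\dsp(f,g)\int_0^1\mu_2^2(q_\tau)\,d\tau$ is symmetric under reversing the segment, Theorem~\ref{thm:CBA} reduces (by swapping $f$ and $g$) to Theorem~\ref{thm:IBA}, where the fixed, possibly ill-conditioned system sits at $\tau=0$. There the $\tau$-integral is cut at $\tau_0=\min\{1,\delta_0/\dsp(f,g)\}$ with $\delta_0=\lambda/(D^{3/2}\mum^2(g))$ --- exactly the first step of \ALH. On $[0,\tau_0]$, inequality~\eqref{eq:b1} gives $\mu_2(q_\tau)\le(1+\e)\mum(g)$, and the \emph{length} of this piece times $\dsp(f,g)$ is $\delta_0$, so its total contribution is $(1+\e)^2\lambda D^{-3/2}\le 0.01$: the $\mum^2$ cancels. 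On $[\tau_0,1]$ one has $t\ge t_A\approx\delta_0/A$, so the variance of $q_t$ is bounded below, Theorem~\ref{th:mu2-bound} applies, and $\int_{t_A}^1 t^{-2}dt\approx A/\delta_0$ is precisely where the $N(n+1)\mum^2$ dependence and the constant come from. The smallness near the fixed endpoint must come from the shortness of the interval on which the Lipschitz estimate holds, not from $\|q_\perp\|$; without importing that idea your $t$-integral does not converge.
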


\subsection{A Near Solution of Smale's 17th Problem}

We finally want to consider {\em deterministic} algorithms finding
zeros of polynomial systems. Our goal is to exhibit one such algorithm
working in nearly-polynomial average time, more precisely in
average time $N^{\Oh(\log\log N)}$. A first ingredient to do so is
a deterministic homotopy algorithm which is fast when $D$ is small.
This consists of algorithm \ALH\ plus the initial pair $(\oU,\bz_1)$, where
$\oU=(\oU_1,\ldots,\oU_n)\in S(\Hd)$ with
$\oU_i = \frac1{\sqrt{2n}}(X_0^{d_i} - X_i^{d_i})$
and $\bz_1=(1:1:\ldots:1)$.

We consider the following algorithm \FD\ (Moderate Degree):

\algo
\> Algorithm \FD\\[2pt]
\>{\bf input} $f\in \Hd$\\[2pt]
\>\> {\tt run \ALH\ on input $(f,\oU,\bz_1)$} \falgo
\medskip

We write $K_{\oU}(f):=K(f,\oU,\bz_1)$ for the number of iterations
of algorithm \FD\ with input $f$. We are interested in
computing the average over $f$ of $K_{\oU}(f)$ for $f$ randomly
chosen in $S(\Hd)$ from the uniform distribution.

The complexity of \FD\ is bounded as follows.

\begin{theorem}\label{thm:S17FD}
The average number of iterations of Algorithm \FD\ is bounded as
$$
   \E_{f\in S(\Hd)}K_{\oU}(f) \leq
    400821\, D^{3}\, N(n+1)^{D+1}.
$$
\end{theorem}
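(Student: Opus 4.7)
The plan is to invoke Theorem~\ref{thm:main_path_following}, which bounds
$$K_{\oU}(f)\ \le\ 245\, D^{3/2}\,\dsp(f,\oU)\int_0^1\mun^2(q_\tau,\zeta_\tau)\,d\tau,$$
where $(q_\tau,\zeta_\tau)$ is the lift of $E_{\oU,f}$ with origin $(\oU,\bz_1)$. The task then reduces to estimating the expectation of the right-hand side for $f$ uniform on $S(\Hd)$. Because \FD\ is scale-invariant in $f$, I may equivalently take $f\sim N(0,\Id)$ on $\Hd$, the radial part integrating out. Using $\dsp(\oU,q_\tau)=\tau\alpha$ with $\alpha:=\dsp(\oU,f)$, and the scale-invariance of $\mun$ in its first argument, re-parameterising by the arclength $\theta=\tau\alpha$ along the spherical geodesic converts the product $\dsp(f,\oU)\int_0^1\mun^2(q_\tau,\zeta_\tau)\,d\tau$ into
$$\int_0^{\alpha}\mun^2(\tilde q_\theta,\zeta_\theta)\,d\theta,\qquad \tilde q_\theta=\cos\theta\,\oU+\sin\theta\,v,$$
where $v$ is the unit tangent at $\oU$ of the geodesic toward $f/\|f\|$.

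Fubini then reduces matters to bounding, for each fixed $\theta\in[0,\pi]$, the inner expectation $\E_f[\indic_{\alpha\ge\theta}\,\mun^2(\tilde q_\theta,\zeta_\theta)]$. For this, I would apply a coarea-type change of variables. The map $f\mapsto\tilde q_\theta$ has a tractable Jacobian, and its push-forward of the Gaussian on $\Hd$ is a density well approximated, after correcting by the Jacobian factor, by a Gaussian centred at $(\cos\theta)\,\oU$ with effective variance of order $\sin^2\theta$. Theorem~\ref{th:mu2-bound} then controls the averaged ratio $\E_q[\mu_2^2(q)/\|q\|^2]$ at variance $\sigma^2\sim\sin^2\theta$, producing a factor of the form $(n+1)/\sin^2\theta$ to be integrated over $\theta$.

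The principal obstacle is that the integrand features $\mun^2$ evaluated at the \emph{single} lifted zero $\zeta_\theta$, not the averaged $\mu_2^2(\tilde q_\theta)$; unlike the Beltr\'an--Pardo setting, where the random initial pair drawn from $\rhozero$ effectively averages over the $\Dn$ zeros, the deterministic start $(\oU,\bz_1)$ fixes one zero throughout. The crudest remedy $\mun^2(\tilde q_\theta,\zeta_\theta)\le\Dn\,\mu_2^2(\tilde q_\theta)$ with $\Dn\le D^n$ is exponential in $n$ and so insufficient for the target $\lesssim N(n+1)^{D+1}$, which is polynomial in $n$ for fixed $D$. A sharper accounting, exploiting the explicit form of $(\oU,\bz_1)$ and the identity $N=\sum_i\binom{n+d_i}{d_i}\lesssim n(n+1)^D$, is required to convert the Gaussian bound of Theorem~\ref{th:mu2-bound} into the stated factor $(n+1)^{D+1}$. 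Once this step is carried out, integrating the resulting $\theta$-dependent estimate against the Gaussian density for $\alpha$ (which suppresses large $\theta$) and collecting numerical constants yields $\E_{f\in S(\Hd)} K_{\oU}(f)\le 400821\,D^{3}\,N(n+1)^{D+1}$.
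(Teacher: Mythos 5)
You have correctly located the principal obstacle --- that Theorem~\ref{thm:main_path_following} controls $K_{\oU}(f)$ by $\mun^2(q_\tau,\zeta_\tau)$ along the \emph{single} lifted path starting at $(\oU,\bz_1)$, whereas Theorem~\ref{th:mu2-bound} only controls the mean square $\mu_2^2(q_\tau)$ over all $\Dn$ zeros --- but you do not resolve it, and the resolution is the heart of the paper's proof. The missing idea is a symmetry argument: the $\Dn$ zeros $\bz_j$ of $\oU$ form a single orbit under unitary transformations $\nu_j$ that fix $\oU$ (coordinatewise multiplication by roots of unity), and by unitary invariance of the whole setup (Lemma~\ref{le:Uinv}) one has $K(f,\oU,\bz_1)=K(f\circ\nu_j^{-1},\oU,\bz_j)$. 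Averaging over $f$ uniform on $S(\Hd)$, which is isotropic, then gives $\E_f K_{\oU}(f)=\E_f\frac1{\Dn}\sum_j K(f,\oU,\bz_j)$ (Proposition~\ref{prop:inv2}), and the right-hand side is exactly the quantity bounded by $245\,D^{3/2}\dsp(f,\oU)\int_0^1\mu_2^2(q_\tau)\,d\tau$ via \eqref{eq:cor3.4}. This replaces the single-zero condition number by the mean square condition number at no cost, with no factor of $\Dn$. Your closing remark that ``a sharper accounting\dots is required'' is an accurate diagnosis, but leaving that step unexecuted leaves the proof incomplete at its crux; moreover, your guess that the factor $(n+1)^{D+1}$ should come from $N\lesssim n(n+1)^D$ points in the wrong direction --- $N$ appears as a separate factor in the bound, and $(n+1)^D$ actually comes from the explicit estimate $\mum^2(\oU)\le 2(n+1)^D$ (Lemma~\ref{le:comp-munorm}).

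There is a second gap in your outline. After the reduction to $\mu_2^2$, the expectation over $f$ cannot be handled as in the average-case analysis of \LV\ because the endpoint $\oU$ is fixed, not Gaussian: the effective variance of $q_t$ is $\sigma_t^2=t^2$, and your proposed integral of $(n+1)/\sin^2\theta$ (equivalently $\int dt/t^2$) diverges at the $\oU$-end. The paper's Theorem~\ref{thm:IBA} deals with this by splitting the homotopy at $\tau_0(f)=\min\{1,\delta_0/\dsp(f,\oU)\}$: on the initial piece, inequality \eqref{eq:b1} bounds $\mu_2^2(q_\tau)$ by $(1+\e)^2\mum^2(\oU)$ directly, and on the remaining piece $t$ is bounded below by an explicit $t_A$, making the integral $\int_{t_A}^1 dt/t^2$ finite and of order $1/\delta_0\sim D^{3/2}\mum^2(\oU)$. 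Both halves therefore require the bound on $\mum(\oU)$, which you never compute. Without the orbit argument and without this splitting, the proposal does not yield the stated estimate.
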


Algorithm \FD\ is efficient when $D$ is small, say, when $D\leq n$.
For $D>n$ we use another approach, namely, a real number algorithm designed
by Jim Renegar~\cite{rene:89} which in this case has a performance
similar to that of \FD\ when $D\leq n$. Putting both pieces together we
will reach our last main result.

\begin{theorem}\label{th:near17}
There is a {\em deterministic} real number algorithm that on input~$f\in\Hd$
computes an approximate zero of~$f$ in average time
$N^{\Oh(\log\log N)}$, where $N=\dim\Hd$ measures the size of the input~$f$.
Moreover, if we restrict data to polynomials satisfying
$$
    D\le n^{\frac1{1+\e}} \quad\mbox{ or }\quad D\ge n^{1+\e},
$$
for some fixed $\e>0$,
then the average time of the algorithm is polynomial
in the input size~$N$.
\end{theorem}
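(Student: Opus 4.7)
The plan is to build a deterministic dispatcher that on input $f \in \Hd$ reads the degree pattern and runs Algorithm \FD\ when $D \le n$, and Renegar's deterministic real-number algorithm from~\cite{rene:89} when $D > n$. Theorem~\ref{thm:S17FD} bounds the average iteration count of the first branch by $\Oh(D^3 N (n+1)^{D+1})$ (each iteration costing $\Oh(N+n^3)$), while Renegar's algorithm computes approximate zeros deterministically in $(nD)^{\Oh(n)}$ arithmetic operations, so its average and worst-case times coincide. With this architecture, all three assertions of the theorem --- the $N^{\Oh(\log\log N)}$ bound, polynomiality for $D \le n^{1/(1+\e)}$, and polynomiality for $D \ge n^{1+\e}$ --- reduce to elementary size estimates on $N = \dim_\C \Hd$.

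For the \FD\ branch ($D \le n$) I would use $N \ge \binom{n+D}{D} \ge \max\{2^D,(n/D)^D\}$, which yields both $\log N \ge D$ and $\log N \ge D \log(n/D)$. A case split on $n/D \le 2$ versus $n/D > 2$ should give
\[
  (D+1)\log(n+1) \;\le\; \Oh(\log N \cdot \log\log N)
  \qquad (2 \le D \le n),
\]
using $\log n \le 1 + \log D \le 1 + \log\log N$ in the first case (since $n \le 2D \le 2\log N$) and $D \log n \le \log N + D\log D$ with $\log D \le \log\log N$ in the second. Feeding this into Theorem~\ref{thm:S17FD} produces an average number of iterations $N^{\Oh(\log\log N)}$. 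When additionally $D \le n^{1/(1+\e)}$, i.e., $n \ge D^{1+\e}$, one has $\log N \ge D \log(n/D) \ge (\e/(1+\e))\, D \log n$, so $(D+1)\log(n+1) = \Oh_\e(\log N)$ and \FD\ runs in average time polynomial in~$N$.

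The Renegar branch ($D > n$) is handled by the mirror-image estimate $N \ge \binom{n+D}{n} \ge \max\{2^n,(D/n)^n\}$: a parallel case split on $D/n$ gives $n\log(nD) = \Oh(\log N \cdot \log\log N)$ for $D > n$, so Renegar costs $N^{\Oh(\log\log N)}$; and when $D \ge n^{1+\e}$ one gets $n \log(nD) = \Oh_\e(\log N)$, giving $N^{\Oh_\e(1)}$. The hard part, I expect, will be the transition region $D \approx n$: with $D = n$ one has $N \asymp \binom{2n}{n}\asymp 4^n/\sqrt{n}$, so $\log N \asymp n$ and $\log\log N \asymp \log n$, and the factor $(n+1)^{D+1} \asymp n^{n+1}$ in \FD's bound is precisely $N^{\Theta(\log\log N)}$. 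This both forces the $\log\log N$ exponent and explains why the polynomial regimes require a power-of-$n$ gap between $D$ and $n$.
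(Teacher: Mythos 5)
Your $D\le n$ branch is sound and matches the paper's: Theorem~\ref{thm:S17FD} plus elementary estimates on $\binom{n+D}{D}$ (these are exactly the content of Lemma~\ref{le:B17}(1)--(4), and your case split on $n/D$ is an equivalent route to the same bounds, including the sharpened $\Oh_\e(\log N)$ bound when $D\le n^{1/(1+\e)}$). Your diagnosis of the transition region $D\approx n$ as the source of the $\log\log N$ exponent is also correct.

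The gap is in the $D>n$ branch, in the sentence ``Renegar's algorithm computes approximate zeros deterministically in $(nD)^{\Oh(n)}$ arithmetic operations, so its average and worst-case times coincide.'' Renegar's algorithm $\ReA(R,\d)$ does \emph{not} output approximate zeros in the sense of Definition~\ref{def:app-zero}; it outputs $\d$-approximations, i.e.\ points $x$ with $d_\aff(x,\z)\le\d$ for the zeros $\z$ with $\|\z\|_\aff\le R$. To certify that such an $x$ is an approximate zero one needs $D^{3/2}\mun(f,x)\,\d\le C$ (Proposition~\ref{cor:crit}), and since $\mun(f,\z)$ is unbounded on $\Hd\setminus\Sigma$ there is \emph{no} fixed choice of $\d$ (nor of $R$, since all zeros may have large $\|\z\|_\aff$ or lie at infinity) that works for all inputs. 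Consequently there is no worst-case bound for this branch, and ``average and worst-case times coincide'' is false. The paper repairs this by iterating Renegar's routine with $R=4^k$, $\d=2^{-k}$ and testing the certificate $D^{3/2}\mun(f,x)\d\le C$ at each stage (Algorithm \IRe); the resulting running time is a random variable whose expectation is controlled by the tail bound $\Prob\{\mum(f)\ge\rho^{-1}\}=\Oh(n^3N^2\Dn\rho^4)$ from \cite{Bez2} together with a bound on the probability that all zeros satisfy $\|\z\|_\aff\ge R$ (Lemmas~\ref{le:R17} and~\ref{le:E17}, Proposition~\ref{pro:T17}). So the $D>n$ branch is genuinely an average-case analysis as well, and this probabilistic ingredient is missing from your proposal. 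Your size estimate $n\log(nD)=\Oh(\log N\cdot\log\log N)$ for $D>n$ (the analogue of Lemma~\ref{le:B17}(5)) is fine once the correct algorithm and its expected cost $(nN\Dn)^{\Oh(1)}$ are in place.
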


\section{Complexity Analysis of \ALH}\label{sec:ALH}

The goal of this section is to prove
Theorem~\ref{thm:main_path_following}.
An essential component in this proof is an
estimate of how much $\mun(f,\z)$ changes when $f$ or $\z$ (or
both) are slightly perturbed. The following result gives upper and
lower bounds on this variation. It is a precise version, with
explicit constants, of Theorem~1 of~\cite{Bez6}.

\begin{proposition}\label{prop:apps}
Assume $D\geq 2$. Let $0<\e\leq 0.13$ be arbitrary and
$C\leq \frac{\e}{5.2}$.
For all $f,g\in S(\Hd)$
and all $x,\zeta\in \proj^n$, if
$d(f,g) \leq \frac{C}{D^{1/2}\mun(f,\zeta)}$ and $d(\zeta,x) \leq
 \frac{C}{D^{3/2}\mun(f,\zeta)}$,
then
\begin{center}
$\displaystyle
   \frac{1}{1+\e}\, \mun(g,x)\leq \mun(f,\zeta) \leq
     (1+\e)\mun(g,x).
$
\end{center}
\vspace*{-0.7cm}{{\mbox{}\hfill\qed}}\vspace{0.6cm}
\end{proposition}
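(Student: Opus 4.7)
The plan is to prove this as a quantitative refinement of Theorem~1 of~\cite{Bez6}, carrying constants explicitly through the argument. Since $f,g\in S(\Hd)$, only the projective representatives of $x,\zeta$ need to be normalized, and we may fix unit affine representatives, so that $\|f\|=\|g\|=1$ and $\|\zeta\|=\|x\|=1$ after rescaling. Write $\mu:=\mun(f,\zeta)$ for brevity. The strategy is to view $\mun$ as (a norm of) an inverse linear map and apply a standard operator perturbation lemma: if $A$ is invertible, $B$ a small perturbation, and $\|A^{-1}\|\cdot\|B-A\|=\delta<1$, then $B$ is invertible with $\frac{\|A^{-1}\|}{1+\delta}\le \|B^{-1}\|\le\frac{\|A^{-1}\|}{1-\delta}$.

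The heart of the matter is therefore a Lipschitz estimate for the map $(h,y)\mapsto M(h,y):=Dh(y)_{\mid T_y}\cdot\diag(\sqrt{d_i}\|y\|^{d_i-1})^{-1}$, whose operator-norm inverse equals $\mun(h,y)/\|h\|$. I would split $M(g,x)-M(f,\zeta)$ as $(M(g,x)-M(f,x))+(M(f,x)-M(f,\zeta))$ and bound each term. The first piece is linear in $f-g$, and using the unitary invariance of the Bombieri-Weyl inner product one obtains a bound of the shape $\|M(g,x)-M(f,x)\|\le c_1\sqrt{D}\,\|g-f\|$; since $d(f,g)$ and $\|f-g\|$ agree up to the usual sine/angle comparison for small arguments, our hypothesis $d(f,g)\le C/(D^{1/2}\mu)$ yields a contribution $c_1 C/\mu$ to $\|M(g,x)-M(f,\zeta)\|$. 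For the second piece, I would apply the higher derivative estimate of \cite[p.~267, Thm.~2]{bcss:95} (as used in the proof of Theorem~\ref{thm:gamma}) to control $M(f,\cdot)$ as a function of the base point: this yields a Lipschitz constant of order $D^{3/2}$, so that $\|M(f,x)-M(f,\zeta)\|\le c_2 D^{3/2}\,d(\zeta,x)$, and the hypothesis $d(\zeta,x)\le C/(D^{3/2}\mu)$ gives a contribution $c_2 C/\mu$. A subtlety here is that $T_\zeta$ and $T_x$ differ as subspaces of $\C^{n+1}$, so one must compose with the parallel transport (or orthogonal projection) between them and absorb that additional small discrepancy into $c_2$; this is straightforward since $d(\zeta,x)$ is tiny.

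Putting the two estimates together and multiplying by $\|M(f,\zeta)^{-1}\|=\mu$ gives $\delta:=\mu\cdot\|M(g,x)-M(f,\zeta)\|\le (c_1+c_2)C$. The constants $c_1,c_2$ computed from the Bombieri–Weyl estimates and \cite[Thm.~2]{bcss:95} combine to at most $5.2$ (this is the step encoded in the choice $C\le\e/5.2$), so $\delta\le \e$. Feeding $\delta$ into the perturbation lemma gives $\frac{\mu}{1+\e}\le \mun(g,x)\le\frac{\mu}{1-\e}$, and a standard rewriting (using $\tfrac{1}{1-\e}\le 1+\tfrac{\e}{1-\e}$ with $\e\le 0.13$) converts this to the symmetric form $\tfrac{1}{1+\e}\mun(g,x)\le\mu\le(1+\e)\mun(g,x)$ claimed. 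I expect the main obstacle to be the careful bookkeeping of the constants $c_1,c_2$: in particular, keeping track of the sine-versus-angle comparison, of the tangent-space parallel transport, and of the factors produced by the higher derivative estimate, so that $c_1+c_2\le 5.2$ comes out without room to spare for all $\e\le 0.13$.
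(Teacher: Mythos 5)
The paper does not actually prove this proposition --- it is stated with a terminal \qed and a pointer to Theorem~1 of~\cite{Bez6}, whose proof has exactly the architecture you describe: perturbation of the inverse of the normalized derivative, a norm-$\le 1$ (Bombieri--Weyl) estimate for the variation in the system, the higher derivative estimate for the variation in the base point, and an identification of $T_\zeta$ with $T_x$. So your route is the intended one. Two points, however, keep your write-up from being a proof.

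First, a concrete error in the endgame. From $\delta\le\e$ the perturbation lemma gives $\mun(g,x)\le \mun(f,\zeta)/(1-\e)$, and you claim a ``standard rewriting'' turns this into $\mun(g,x)\le(1+\e)\mun(f,\zeta)$. It does not: $\tfrac{1}{1-\e}=1+\tfrac{\e}{1-\e}>1+\e$, so the bound you obtain is strictly weaker than the one asserted. To get the symmetric two-sided statement you must arrange $\delta\le\e/(1+\e)$ (the condition $\tfrac{1}{1-\delta}\le 1+\e$), which with $C\le\e/5.2$ forces your aggregate constant to satisfy $c_1+c_2\le 5.2/(1+\e)\approx 4.6$ at $\e=0.13$, not $5.2$. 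Second, and relatedly, the constants $c_1,c_2$ are never computed --- they are the entire content of a proposition whose raison d'\^etre is ``explicit constants,'' and you yourself note there is no room to spare. The pieces that must be tracked are: the chordal-versus-angular comparison $\|f-g\|\le \dsp(f,g)$ (which goes the right way), the second-derivative bound $\|D^2f_i(y)\|\le d_i(d_i-1)\|y\|^{d_i-2}\|f_i\|$ feeding the $D^{3/2}$ Lipschitz constant, and the tangent-space correction, whose effect on $\|B^{-1}\|$ is a multiplicative factor of order $1/\cos\dpr(\zeta,x)$ that must also be folded into the budget. Until these are carried out and shown to sum below $4.6$, the statement is plausible but not established.
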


In what follows, we will fix the constants as
$\e=0.13$ and $C=\frac{\e}{5.2} = 0.025$.

\begin{remark}\label{rem:Ceps}
The constants $C$ and $\e$ implicitly occur in the statement of
Theorem~\ref{thm:main_path_following} since the 245 therein is a
function of these numbers. But their role is not limited to this
since they also occur in the algorithm \ALH\
in the parameter $\lambda=\frac{C(1-\e)}{2(1+\e)^4}$
controlling the update $\tau+\Delta\tau$ of $\tau$.
We note that for the
former we could do without precise values by using the big Oh
notation. In contrast, we cannot talk of a constructive procedure
unless all of its steps are precisely given.
\end{remark}

\begin{proof}[Proof of Theorem~\ref{thm:main_path_following}]
Let $0=\tau_0<\tau_1<\ldots<\tau_k=1$ and 
$\z_0=x_0,x_1,\ldots,x_k$
be the sequences of $\tau$-values and points in $\proj^n$
generated by the algorithm \ALH.
To simplify notation we write $q_i$ instead of $q_{\tau_i}$ and
$\z_i$ instead of $\zeta_{\tau_i}$.
\smallskip

We claim that, for $i=0,\ldots,k-1$, the following inequalities
are true:
\medskip

\noindent {\bf (a)}\quad $\displaystyle
 \dpr(x_i,\zeta_i)\leq \frac{C}{D^{3/2}\mun(q_i,\zeta_i)}$
\smallskip

\noindent {\bf (b)}\quad $\displaystyle
    \frac{\mun(q_i,x_i)}{(1+\e)}\leq
    \mun(q_i,\zeta_i)\leq (1+\e)\mun(q_i,x_i)$
\smallskip

\noindent {\bf (c)}\quad $\displaystyle
 \dsp(q_i,q_{i+1})\leq \frac{C}{D^{3/2}\mun(q_i,\zeta_i)}$
\smallskip

\noindent {\bf (d)}\quad $\displaystyle
 \dpr(\zeta_i,\zeta_{i+1})\leq \frac{C}{D^{3/2}\mun(q_i,\zeta_i)}
 \,\frac{(1-\e)}{(1+\e)}$
\smallskip

\noindent {\bf (e)}\quad $\displaystyle
 \dpr(x_i,\zeta_{i+1})\leq \frac{2C}{(1+\e)D^{3/2}\mun(q_i,\zeta_i)}$
\medskip

We proceed by induction showing that
$$
  ({\bf a},i)\Rightarrow ({\bf b},i)\Rightarrow
  \big(({\bf c},i) \mbox{\rm\ and } ({\bf d},i)\big)\Rightarrow
  ({\bf e},i)\Rightarrow ({\bf a},i+1).
$$
Inequality (a) for $i=0$ is trivial.

Assume now that (a) holds for some $i \le k-1$. Then,
Proposition~\ref{prop:apps} (with $f=g=q_i$) implies
$$
   \frac{\mun(q_i,x_i)}{(1+\e)}\leq
   \mun(q_i,\zeta_i)\leq (1+\e)\mun(q_i,x_i)
$$
and thus (b). We now show (c) and (d). 
To do so, put $p_\tau := \frac{q_\tau}{\|q_\tau\|}$ and let $\tau_*>\tau_i$ be such that
$\int_{\tau_i}^{\tau_*}(\|\dot{p}_\tau\|+\|\dot{\zeta}_\tau\|)d\tau
=\frac{C}{D^{3/2}\mun(q_i,\zeta_i)}\,\frac{(1-\e)}{(1+\e)}$ or
$\tau_*=1$, whichever the smallest.
Then, for all $t\in[\tau_i,\tau_*]$,
\begin{eqnarray*}
 \dpr(\zeta_i,\zeta_t) &=& \int_{\tau_i}^t\|\dot{\zeta}_\tau\|\, d\tau
     \leq \int_{\tau_i}^{\tau_*}(\|\dot{p}_\tau\|+\|\dot{\zeta}_\tau\|)d\tau \\
    &\leq& \frac{C}{D^{3/2}\mun(q_i,\zeta_i)}\,\frac{(1-\e)}{(1+\e)}
\end{eqnarray*}
and, similarly,
$$
 \dsp(q_i,q_t)
  \leq \frac{C}{D^{3/2}\mun(q_i,\zeta_i)}\,\frac{(1-\e)}{(1+\e)}
  \leq \frac{C}{D^{3/2}\mun(q_i,\zeta_i)}.
$$
It is therefore enough to show that $\tau_{i+1}\leq \tau_*$. This is
trivial if $\tau_*=1$. We therefore assume $\tau_*<1$. The two
bounds above allow us to apply Proposition~\ref{prop:apps} and
to deduce, for all $\tau\in[\tau_i,\tau_*]$,
$$
    \mun(q_\tau,\zeta_\tau) \leq (1+\e) \mun(q_i,\zeta_i).
$$
From $\|\dot{\z}_\tau\| \le \mun(q_\tau,\z_\tau)\, \|\dot{p}_\tau\|$
(cf.~\cite[\S12.3-12.4]{bcss:95})
and $\mun(q_\tau,\z_\tau)\ge 1$
it follows that
\begin{equation*}
\begin{split}
  \frac{C}{D^{3/2}\mun(q_i,\zeta_i)}\,\frac{(1-\e)}{(1+\e)}
  =  \int_{\tau_i}^{\tau_*}(\|\dot{p}_\tau\|
       +\|\dot{\zeta}_\tau\|)d\tau
  \leq \int_{\tau_i}^{\tau_*}2\mun(q_\tau,\zeta_\tau)
    \|\dot{p}_\tau\| d\tau \\
  \leq\  2(1+\e)\mun(q_i,\zeta_i) \int_{\tau_i}^{\tau_*}
   \|\dot{p}_\tau\| d\tau
  \ \leq\  2\dsp(q_i,q_{\tau_*})(1+\e)\mun(q_i,\zeta_i) .
\end{split}
\end{equation*}
Consequently, using (b), we obtain
$$
  \dsp(q_i,q_{\tau_*}) \geq \frac{C(1-\e)}{2(1+\e)^2D^{3/2}\mun^2(q_i,\zeta_i)}
  \geq \frac{C(1-\e)}{2(1+\e)^4 D^{3/2}\mun^2(q_i,x_i)} .
$$
The parameter $\lambda$ in \ALH\ is chosen as $\frac{C(1-\e)}{2(1+\e)^4}$ (or slightly less).
By the definition of $\tau_{i+1}-\tau_i$ in \ALH\ we have
$\a(\tau_{i+1}-\tau_i) = \frac{\lambda}{D^{3/2}\mun^2(q_i,x_i)}$.
So we obtain
$$
 \dsp(q_i,q_{\tau_*}) \geq \a(\tau_{i+1}-\tau_i)\,=\,\dsp(q_i,q_{i+1}).
$$
This implies $\tau_{i+1}\leq \tau_*$ as claimed and hence,
inequalities (c) and (d). With them, we may apply
Proposition~\ref{prop:apps} to deduce, for all
$\tau\in[\tau_i,\tau_{i+1}]$,
\begin{equation}\label{eq:b1}
    \frac{\mun(q_i,\zeta_i)}{1+\e} \leq \mun(q_\tau,\zeta_\tau)
    \leq (1+\e) \mun(q_i,\zeta_i).
\end{equation}
Next we use the triangle inequality, (a), and (d), to obtain
\begin{eqnarray*}
  \dpr(x_i,\zeta_{i+1}) &\leq& \dpr(x_i,\zeta_i) +\dpr(\zeta_i,\zeta_{i+1}) \\
 &\leq& \frac{C}{D^{3/2}\mun(q_i,\zeta_i)}
    +\frac{C}{D^{3/2}\mun(q_i,\zeta_i)}\frac{(1-\e)}{(1+\e)} \\
  &=& \frac{2C}{(1+\e)D^{3/2}\mun(q_i,\zeta_i)},
\end{eqnarray*}
which proves (e).
Theorem~\ref{thm:gamma} yields that $x_i$ is an approximate zero
of~$q_{i+1}$ associated with its zero $\z_{i+1}$.
Indeed, by our choice of $C$ and $\e$, we have
$2C\le u_0(1+\e)$ and hence
$\dpr(x_i,\zeta_{i+1})\leq\frac{u_0}{D^{3/2}\mun(q_i,\zeta_i)}$.
Therefore,
$x_{i+1}=N_{q_{i+1}}(x_i)$ satisfies
$$
\dpr(x_{i+1},\zeta_{i+1})\leq \frac12\,\dpr(x_i,\z_{i+1}).
$$
Using (e) and
the right-hand inequality in~\eqref{eq:b1} with $t=t_{i+1}$, we
obtain
$$
   \dpr(x_{i+1},\zeta_{i+1})
  \leq \frac{C}{(1+\e)D^{3/2}\mun(q_i,\zeta_i)}\\
  \leq \frac{C}{D^{3/2}\mun(q_{i+1},\zeta_{i+1})} ,
$$
which proves (a) for $i+1$. The claim is thus proved.

The estimate
$ \dpr(x_{k},\zeta_{k}) \le \frac{C}{D^{3/2}\mun(q_k,\zeta_k)}$
just shown for $i=k-1$
implies by Theorem~\ref{thm:gamma} that the
returned point~$x_k$ is an approximate zero of
$q_k=f$ with associated zero~$\z_1$.

Consider now any $i\in\{0,\ldots,k-1\}$. Using~\eqref{eq:b1} and (b)
we obtain
\begin{eqnarray*}
  \int_{\tau_i}^{\tau_{i+1}} \mun^2(q_\tau,\zeta_\tau)d\tau
  &\geq&
  \int_{\tau_i}^{\tau_{i+1}
  }\frac{\mun^2(q_i,\zeta_i)}{(1+\e)^2}d\tau
  =  \frac{\mun^2(q_i,\zeta_i)}{(1+\e)^2} (\tau_{i+1}-\tau_i) \\
  &\geq&\frac{\mun^2(q_i,x_i)}{(1+\e)^4}(\tau_{i+1}-\tau_i)\\
  &=& \frac{\mun^2(q_i,x_i)}{(1+\e)^4}
       \frac{\lambda}{\a D^{3/2}\mun^2(q_i,x_i)}\\
  &=&  \frac{\lambda}{(1+\e)^4\a D^{3/2}} \ \ge\  \frac1{245} \frac1{\a D^{3/2}} .
\end{eqnarray*}
This implies
$$
  \int_0^1 \mun^2(q_\tau,\zeta_\tau)d\tau
   \ge \frac{k}{245} \frac1{\a D^{3/2}} ,
$$
which proves the stated bound on~$k$.
\end{proof}
\bigskip

\section{A Useful Change of Variables}

We first draw a conclusion of Theorem~\ref{thm:main_path_following},
that we will need several times.
Recall the definition~\eqref{eq:mu2} of the mean square condition number~$\mu_2(q)$.

\begin{proposition}\label{cor:main_path_following}
The expected number of iterations of \ALH\ on input~$f\in\Hd\setminus\Sigma$ is bounded as
$$
    K(f) \le 245\, D^{3/2}\E_{g\in S(\Hd)}
    \left(\dsp(f,g)\int_0^1\mu_2^2(q_\tau)d\tau \right).
$$
\end{proposition}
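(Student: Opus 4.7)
The plan is to start from the pointwise estimate of Theorem~\ref{thm:main_path_following} and average it over $(g,\z)\sim\rhozero$, using that $\rhozero$ factors as ``$g$ standard Gaussian, then $\z$ uniform on the zero set $V_{\proj}(g)$''. Concretely, Theorem~\ref{thm:main_path_following} yields for every admissible triple $(f,g,\z)$
\[
  K(f,g,\z) \;\le\; 245\, D^{3/2}\, \dsp(f,g) \int_0^1 \mun^2(q_\tau,\z_\tau)\, d\tau ,
\]
where $(q_\tau,\z_\tau)$ is the lifting in $\Vp$ of the segment $E_{f,g}$ with origin $(g,\z)$. Taking expectation over $(g,\z)\sim\rhozero$ and invoking Fubini on the right-hand side gives
\[
  K(f) \;\le\; 245\, D^{3/2}\, \E_{g}\!\left[\dsp(f,g) \int_0^1 \E_{\z\mid g}\!\left[\mun^2(q_\tau,\z_\tau)\right] d\tau\right].
\]

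The key step is the inner conditional expectation. For $g\notin\Sigma$, fixed $\tau\in[0,1]$ and $q_\tau\notin\Sigma$, the map $\z\mapsto \z_\tau$ sends the $\Dn$ zeros of $g$ bijectively to the $\Dn$ zeros of $q_\tau$ (this is the content of the lifting along a path avoiding the discriminant). Since under $\rhozero$ the conditional law of $\z$ given $g$ is uniform on $V_{\proj}(g)$, one immediately obtains
\[
  \E_{\z\mid g}\!\left[\mun^2(q_\tau,\z_\tau)\right] \;=\; \frac{1}{\Dn}\sum_{\z'\in V_{\proj}(q_\tau)} \mun^2(q_\tau,\z') \;=\; \mu_2^2(q_\tau) ,
\]
by the very definition~\eqref{eq:mu2} of $\mu_2$. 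Plugging this back in,
\[
   K(f) \;\le\; 245\, D^{3/2}\, \E_{g\sim N(0,\Id)}\!\left[\dsp(f,g) \int_0^1 \mu_2^2(q_\tau)\, d\tau\right].
\]

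Finally, both factors inside the expectation are scale-invariant in $g$: the angle $\dsp(f,g)$ depends only on $g/\|g\|$, the parameterization $\tau\mapsto q_\tau$ of $E_{f,g}$ is determined by the angle $\dsp(g,q_\tau)=\tau\a$ and hence only depends on the direction of $g$, and $\mu_2^2$ is invariant under scaling of its argument. Therefore the standard Gaussian expectation over $g\in\Hd$ coincides with the uniform expectation over $g\in S(\Hd)$, yielding the stated bound. The only mild subtlety is ensuring that the exceptional sets where the lift fails (a subset of $\Sigma$ of measure zero under both $\rhozero$ and the uniform measure on $S(\Hd)$, by $f\notin\Sigma$) do not affect the expectations; this is standard.
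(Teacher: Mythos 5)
Your proposal is correct and follows essentially the same route as the paper: apply Theorem~\ref{thm:main_path_following} to each lifting of $E_{f,g}$, use that the endpoints $\z_\tau^{(1)},\ldots,\z_\tau^{(\Dn)}$ at time $\tau$ enumerate the zeros of $q_\tau$ so that averaging over the zeros of $g$ produces exactly $\mu_2^2(q_\tau)$, and then use the structure of $\rhozero$ (Gaussian $g$, uniform choice of zero) together with scale invariance to pass to $\E_{g\in S(\Hd)}$. The paper phrases the averaging as a finite sum $\frac1{\Dn}\sum_i K(f,g,\z^{(i)})$ rather than a conditional expectation, but this is the same computation.
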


\begin{proof}
Fix $g\in\Hd$ such that the segment $E_{f,g}$ does not intersect the
discriminant variety~$\Sigma$
(which is the case for almost all~$g$, as $f\not\in\Sigma$).
To each of the zeros~$\z^{(i)}$ of $g$ there corresponds a lifting
$[0,1]\to V,\tau\mapsto (q_\tau,\z_\tau^{(i)})$
of $E_{f,g}$
such that $\z_0^{(i)}=\z^{(i)}$.
Theorem~\ref{thm:main_path_following} states that
$$
      K(f,g,\z^{(i)}) \le 245\,D^{3/2}\,\dsp(f,g)\int_0^1\mun^2(q_\tau,\zeta_\tau^{(i)})\,d\tau.
$$
Since $\z_\tau^{(1)},\ldots,\z_\tau^{(\Dn)}$ are the zeros of $q_\tau$,
we have by the definition~\eqref{eq:mu2} of the mean square condition number
\begin{equation}\label{eq:cor3.4}
\frac1{\Dn} \sum_{i=1}^\Dn K(f,g,\z^{(i)}) \le 245\,D^{3/2}\,\dsp(f,g)\int_0^1\mu_2^2(q_\tau)\,d\tau.
\end{equation}
The assertion follows now from (compare the forthcoming Lemma~\ref{le:rho1})
\begin{equation*}
  K(f)=\E_{(g,\z)\sim\rhozero}(K(f,g,\z))
 = \E_{g\in S(\Hd)}\left(\frac1{\Dn}\sum_{i=1}^\Dn K(f,g,\z^{(i)}) \right) . \qedhere
\end{equation*}
\end{proof}

The remaining of this article is devoted to prove
Theorems~\ref{thm:A}--\ref{th:near17}.
All of them involve expectations
---over random $f$ and/or $g$--- of the integral
$
    \int_0^1\mu_2^2(q_{\tau})d\tau .
$
In all cases, we will eventually deal with such an expectation with $f$ and
$g$ Gaussian. Since a linear combination (with fixed coefficients)
of two such Gaussian systems is Gaussian as well, it is convenient
to parameterize the interval $E_{f,g}$ by a parameter $t\in[0,1]$
representing a ratio of Euclidean distances (instead of a ratio of angles as $\tau$ does).
Thus we write, abusing notation, $q_t=tf+(1-t)g$.
For fixed $t$, as noted before, $q_t$ follows a Gaussian law.
For this new parametrization we have the following result.

\begin{proposition}\label{prop:a-ALH}
Let $f,g\in\Hd$ be $\R$-linearly independent and $\tau_0\in[0,1]$. Then
\begin{equation*}
 \dsp(f,g)\int_{\tau_0}^1 \mu_2^2 (q_\tau)d\tau
 \leq \int_{t_0}^1 \|f\|\,\|g\|\,\frac{\mu_2^2(q_t)}{\|q_t\|^2}\, dt,
\end{equation*}
where
$$
   t_0=\frac{\|g\|}
     {\|g\| + \|f\| (\sin\a\cot(\tau_0\a)-\cos\a)}
$$
is the fraction of the Euclidean distance $\|f-g\|$ corresponding to
the fraction $\tau_0$ of the angle $\a=\dsp(f,g)$.
\end{proposition}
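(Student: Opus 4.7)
My plan is to change variables from $\tau$ to $t$ in the left-hand integral. Both parameterizations describe the same segment $E_{f,g}\subset\Hd$, so they are related by a (strictly monotone, generically smooth) reparameterization $t=t(\tau)$ with $q_\tau=q_{t(\tau)}$ and $t_0=t(\tau_0)$. Setting $\theta(t):=\dsp(g,q_t)$, the defining relation $\tau\alpha=\dsp(g,q_\tau)$ becomes $\tau\alpha=\theta(t(\tau))$, so that $\alpha\,\tfrac{d\tau}{dt}=\theta'(t)$, and the standard change-of-variables formula gives
$$
  \alpha\int_{\tau_0}^1\mu_2^2(q_\tau)\,d\tau
  \ =\ \int_{t_0}^1 \mu_2^2(q_t)\,\theta'(t)\,dt .
$$
The proof thus reduces to the pointwise estimate $\theta'(t)\le \|f\|\,\|g\|/\|q_t\|^2$.

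For this estimate, I would invoke the classical fact that the distance from a fixed point in a Riemannian manifold is $1$-Lipschitz along smooth curves. Applied to the moving point $[q_t]$ in complex projective space $\proj(\Hd)$ equipped with the Fubini--Study metric (whose induced distance coincides with $\dsp$) and the fixed point $[g]$, this yields $\theta'(t)\le \|\dot{[q_t]}\|$, where the right-hand side is the Fubini--Study speed of the curve. A direct computation on a lift gives the standard expression
$$
  \|\dot{[q_t]}\|
  \ =\ \frac{\sqrt{\|q_t\|^2\|\dot q_t\|^2-|\langle q_t,\dot q_t\rangle|^2}}{\|q_t\|^2}
  \ =\ \frac{\|q_t\wedge\dot q_t\|}{\|q_t\|^2},
$$
the numerator being the norm of the bivector $q_t\wedge\dot q_t\in\Lambda^2\Hd$.

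The key simplification is the exterior-algebra identity
$$
  q_t\wedge\dot q_t\ =\ \bigl(tf+(1-t)g\bigr)\wedge(f-g)\ =\ -\,f\wedge g,
$$
which holds independently of~$t$. Thus $\|q_t\wedge\dot q_t\|=\|f\wedge g\|=\sqrt{\|f\|^2\|g\|^2-|\langle f,g\rangle|^2}=\|f\|\,\|g\|\sin\alpha$, yielding
$$
  \theta'(t)\ \le\ \frac{\|f\|\,\|g\|\sin\alpha}{\|q_t\|^2}\ \le\ \frac{\|f\|\,\|g\|}{\|q_t\|^2},
$$
using $\sin\alpha\le 1$. Substituting back into the change-of-variables formula completes the proof. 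The explicit formula for~$t_0$ given in the statement is obtained by inverting the relation $\tau_0\alpha=\theta(t_0)$, which reduces to elementary trigonometry in the real plane spanned by $f$ and~$g$. I do not foresee a serious obstacle: the essential content is the one-line exterior-algebra identity above, combined with the basic Riemannian fact that distance to a fixed point is $1$-Lipschitz.
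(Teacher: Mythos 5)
Your overall strategy is the same as the paper's: reparameterize the integral by the Euclidean fraction $t$, identify the Jacobian $\a\,d\tau/dt$ with the speed of the normalized curve $t\mapsto q_t/\|q_t\|$, show that this speed equals $\|f\|\,\|g\|\sin\a/\|q_t\|^2$, and discard the factor $\sin\a\le1$. The paper obtains the speed by explicit plane trigonometry (computing $t(\tau)$ and differentiating), and your wedge identity $q_t\wedge\dot q_t=-f\wedge g$ is a slicker way to see the same constancy. However, there is a genuine error in your setup: $\dsp$ is the Riemannian distance on the \emph{sphere} $S(\Hd)$, i.e.\ the real angle with $\cos\dsp(u,v)=\Re\langle u,v\rangle$ for unit vectors, not the Fubini--Study distance on $\proj(\Hd)$, whose cosine is $|\langle u,v\rangle|$ (this is precisely why the paper carries the two symbols $\dsp$ and $\dpr$). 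Consequently your key inequality $\theta'(t)\le\|\dot{[q_t]}\|_{FS}$ is false in general: take $g\in S(\Hd)$ and $f=ig$. These are $\R$-linearly independent with $\a=\dsp(f,g)=\pi/2$, yet $[q_t]$ is constant in $\proj(\Hd)$, so the Fubini--Study speed vanishes identically while $\theta(t)=\dsp(g,q_t)$ increases from $0$ to $\pi/2$. For the same reason your final equality $\sqrt{\|f\|^2\|g\|^2-|\langle f,g\rangle|^2}=\|f\|\,\|g\|\sin\a$ is wrong when $\a$ is the real angle --- and it must be the real angle for your own derivation of $t_0$ ``in the real plane spanned by $f$ and $g$'' to be consistent.

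The repair is immediate: run the identical argument on $S(\Hd)\cong S^{2N-1}$ with the underlying \emph{real} inner product $\Re\langle\cdot,\cdot\rangle$. The curve $t\mapsto q_t/\|q_t\|$ traverses a great-circle arc monotonically, so $\theta'(t)$ in fact \emph{equals} its spherical speed $\|q_t\wedge_\R\dot q_t\|/\|q_t\|^2$, where $\wedge_\R$ denotes the real exterior product; your computation then gives $q_t\wedge_\R\dot q_t=-f\wedge_\R g$ and $\|f\wedge_\R g\|=\sqrt{\|f\|^2\|g\|^2-(\Re\langle f,g\rangle)^2}=\|f\|\,\|g\|\sin\a$, with $\a$ now correctly the real angle. (Monotonicity of $\theta$ is also what legitimizes your change of variables, and it is clear from this picture.) With these substitutions your argument is correct and recovers exactly the formula for $d\tau/dt$ established in the paper's proof, so the two proofs are essentially the same computation packaged differently.
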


\begin{proof}
For $t\in [0,1]$, abusing notation, we let $q_t=tf+(1-t)g$ and $\tau(t)\in[0,1]$ be such
that $\tau(t)\a$ is the angle between $g$ and $q_t$. This
defines a bijective map $[t_0,1]\to [\tau_0,1], t\mapsto \tau(t)$. We denote its
inverse by $\tau\mapsto t(\tau)$.
We claim that
\begin{equation}\label{speed}
 \frac{d\tau}{dt} = \frac{\sin\a}{\a}\, \frac{\|f\|\cdot \|g\|}{\|q_t\|^2} .
\end{equation}
Note that the stated inequality easily follows from this claim by the transformation
formula for integrals together with the bound $\sin\a\le 1$.

To prove Claim~\eqref{speed}, denote
$r=\|f\|$ and $s=\|g\|$.
We will explicitly compute $t(\tau)$ by some elementary geometry.
For this, we introduce cartesian coordinates in the
plane spanned by $f$ and $g$ and
assume that $g$ has the coordinates $(s,0)$
and $f$ has the coordinates $(r\cos\a,r\sin\a)$,
see Figure~\ref{fig:1}.
\begin{figure}[H]
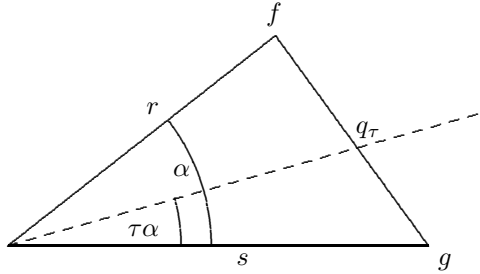

\begin{center}
   \input situation.pictex
\end{center}
\caption{Computing $t(\tau)$.}\label{fig:1}
\end{figure}

Then, the lines determining $q_\tau$ have the equations
$$
    x=y\,\frac{\cos (\tau\a)}{\sin(\tau\a)}
   \qquad\mbox{ and }\qquad
   x=y\,\frac{r\cos \a-s}{r\sin \a}+s
$$
from where it follows that the coordinate $y$ of $q_\tau$ is
\begin{equation}\label{eq:y}
  y=\frac{rs\sin\a\sin(\tau\a)}
     {r\sin\a\cos(\tau\a)-r\cos\a\sin(\tau\a)+s\sin(\tau\a)}.
\end{equation}
Since $t(\tau)=\frac{y}{r\sin\a}$ it follows that
\begin{equation}\label{eq:lambda}
    t(\tau) =\frac{s}{r\sin\a\cot(\tau\a)-r\cos\a+s}.
\end{equation}
This implies the stated formula for $t_0=t(\tau_0)$.
Differentiating with respect to $\tau$,
using~\eqref{eq:y} and  $\sin(\tau\a)=\frac{y}{\|q_\tau\|}$,
we obtain from \eqref{eq:lambda}
\begin{eqnarray*}
  \frac{dt}{d\tau}&=&
   \frac{\a rs\sin\a}
     {(r\sin\a\cos(\tau\a)-r\cos\a\sin(\tau\a)+s\sin(\tau\a))^2}\\
   &=& \frac{\a y^2}{rs \sin^2(\tau\a)\sin\a}
    = \frac{\a \|q_{t(\tau)}\|^2}{rs\sin\a}.
\end{eqnarray*}
This finishes the proof of Claim~\eqref{speed}.
\end{proof}

In all the cases we will deal with, the factor $\|f\|\,\|g\|$ will
be easily bounded and factored out the expectation. We will
ultimately face the problem of estimating expectations of
$\frac{\mu_2^2(q_t)}{\|q_t\|^2}$ for different choices of $\oq_t$ and $\sigma_t$.
This is achieved by Theorem~\ref{th:mu2-bound} stated in \S\ref{se:technical}.





\section{Analysis of \LV\ }\label{se:pfLV}

We derive here from Theorem~\ref{th:mu2-bound} our main results on the
average and smoothed analysis of \LV\ stated in \S\ref{sec:main_results}.
The proof of Theorem~\ref{th:mu2-bound} is postponed to
Sections~\ref{se:smMC}--~\ref{se:red}.

\subsection{Average-case Analysis of \LV\ (proof)}\label{se:Aproof}

To warm up, we first prove Theorem~\ref{thm:A}, which
illustrates the blending of the previous results in a simpler setting.

In the following we set $A:=\sqrt{2N}$ and write
$P_{A,\s} = \Prob\{\|f\| \leq A \mid f \sim N(0,\s^2\Id)\}$
for $\s>0$.

\begin{lemma}\label{lem:X}
We have $P_{A,\s} \ge \frac12$
for all $0<\sigma\leq 1$.
\end{lemma}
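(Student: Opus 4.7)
The plan is to reduce the claim to a statement about the median of a chi-squared random variable.

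First, I would identify the distribution of $\|f\|^2$. Since $\Hd\simeq\R^{2N}$ via the Bombieri-Weyl basis and $f\sim N(0,\s^2\Id)$, one has $f/\s\sim N(0,\Id_{2N})$, so $\|f\|^2/\s^2$ follows the $\chi^2_{2N}$ distribution. Writing $f=\s Z$ with $Z\sim N(0,\Id)$ gives
$$
P_{A,\s}=\Prob\{\|Z\|\le A/\s\},
$$
which is a non-increasing function of $\s$ on $(0,\infty)$. Hence the infimum over $\s\in(0,1]$ is attained at $\s=1$, and it suffices to prove
$$
\Prob\{\chi^2_{2N}\le 2N\}\ \ge\ \tfrac12.
$$

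Next, since $\E(\chi^2_{2N})=2N$, this is precisely the assertion that the median of $\chi^2_{2N}$ does not exceed its mean. I would prove this by invoking the Poisson–Gamma identity: if $X\sim\chi^2_{2N}$ and $Y\sim\mathrm{Poisson}(N)$, then
$$
\Prob\{X\le 2N\}=\Prob\{Y\ge N\},
$$
obtained from the classical relation between the lower incomplete Gamma function and Poisson tail sums (viewing $X/2$ as the $N$-th arrival time of a unit-rate Poisson process on $[0,N]$). It is then a standard fact that for integer $N\ge1$ the median of $\mathrm{Poisson}(N)$ equals $N$, so $\Prob\{Y\ge N\}\ge 1/2$, finishing the proof.

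The main obstacle is the last step: elementary concentration tools (Markov, Chebyshev, Chernoff) do not obviously deliver the inequality $\Prob\{\chi^2_{2N}\le 2N\}\ge 1/2$ because they bound deviations away from the mean in a symmetric way, whereas we need to exploit the positive skewness of $\chi^2_{2N}$. The cleanest self-contained route is the Poisson reduction above, combined with a citation to the classical result on Poisson medians (e.g.\ Choi, 1994), or alternatively a short induction on $N$ using integration by parts on the incomplete Gamma function. Because the lemma is only a technical bookkeeping step used to control the normalization constant of the truncated Gaussian, I would keep the presentation short and cite the median bound rather than reproving it.
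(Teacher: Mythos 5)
Your proof is correct and follows essentially the same route as the paper: reduce to $\sigma=1$ by scaling and then invoke the fact that the median of a chi-square distribution does not exceed its mean, which the paper gets by citing \cite[Corollary~6]{choi:94}. Your Poisson--Gamma reduction to the median of $\mathrm{Poisson}(N)$ is a valid way to make that step self-contained, and is in fact the substance of Choi's result (whose title refers to the same Ramanujan identity), so it is an elaboration rather than a different approach.
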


\begin{proof}
Clearly it suffices to assume $\s=1$.
The random variable $\|f\|^2$ is chi-square distributed with
$2N$ degrees of freedom. Its mean equals $2N$.
In~\cite[Corollary~6]{choi:94} it is shown that
the median of a chi-square distribution is always
less than its mean.
\end{proof}

\begin{proof}[Proof of Theorem~\ref{thm:A}]
We use Proposition~\ref{cor:main_path_following} to obtain
\begin{eqnarray*}
  \E_{f\in S(\Hd)}K(f)
 &\le & 245\, D^{3/2}\, \E_{f\in S(\Hd)} \E_{g\in S(\Hd)}
 \bigg(\dsp(f,g) \int_0^1 \mu_2^2(q_{\tau})d\tau\bigg) \\
 &=& 245\, D^{3/2} \E_{f\sim N_A(0,\Id)}
  \E_{g\sim N_A(0,\Id)}\bigg(\dsp(f,g) \int_0^1 \mu_2^2(q_{\tau})d\tau\bigg).
\end{eqnarray*}
The equality follows from the fact that, since both
$\dsp(f,g)$ and $\mu_2^2(q_{\tau})$ are homogeneous of
degree 0 in both $f$ and $g$, we may replace the uniform
distribution on $S(\Hd)$ by any rotationally invariant distribution on $\Hd$,
in particular by the centered truncated Gaussian $N_A(0,\Id)$ defined in \eqref{eq:densA}.
Now we use Proposition~\ref{prop:a-ALH} (with $\tau_0=0$) to get
\begin{equation}\label{eq:auflosen}
 \E_{f\in S(\Hd)} K(f) \;\leq\;
   245 D^{3/2} A^2 \E_{f\sim N_A(0,\Id)} \E_{g\sim N_A(0,\Id)}
   \bigg(\int_0^1 \frac{\mu_2^2(q_t)}{\|q_t\|^2}\, dt \bigg).
\end{equation}
Denoting by $\rho_{0,1}$ the density of $N(0,\Id)$,
the right-hand side of \eqref{eq:auflosen} equals
\begin{align*}
&
  245\, D^{3/2} \frac{A^2}{P_{A,1}^2}
   \int_{\|f\|\leq A} \int_{\|g\|\leq A}
   \bigg(\int_0^1 \frac{\mu_2^2(q_t)}{\|q_t\|^2}\, dt\bigg)
     \;\rho_{0,1}(g)\,\rho_{0,1}(f)\, dg\, df\\
 &\leq \;
   245\, D^{3/2} \frac{A^2}{P_{A,1}^2}
   \E_{f\sim N(0,\Id)} \E_{g\sim N(0,\Id)}
   \bigg(\int_0^1 \frac{\mu_2^2(q_t)}{\|q_t\|^2}\, d t \bigg) \\
 &=\;
   245\, D^{3/2} \frac{A^2}{P_{A,1}^2}
   \int_0^1 \E_{q_t\sim N(0,(t^2+(1-t)^2)\Id)}
   \bigg(\frac{\mu_2^2(q_t)}{\|q_t\|^2}\bigg)\, d t,
\end{align*}
where the last equality follows
from the fact that,
for fixed $t$, the random polynomial system $q_t=tf + (1-t)g$
has a Gaussian distribution with law $N(0,\s_t^2\Id)$,
where $\s_t^2:=t^2+(1-t)^2$.
Note that we deal with nonnegative integrands,
so the interchange of integrals is justified
by Tonelli's theorem.
By Lemma~\ref{lem:X} we have
$\frac{A^2}{P_{A,1}^2} \le 8N$.

We now apply Theorem~\ref{th:mu2-bound} to deduce that
$$
 \int_0^1 \E_{q_t\sim N(0,\s_t^2\Id)} \left(\frac{\mu_2^2(q_t)}{\|q_t\|^2}\right)\, dt
  \leq \frac{e(n+1)}{2}\, \int_0^1 \frac{dt}{t^2+(1-t)^2}
  = \frac{e\pi(n+1)}{4}.
$$
Consequently,
\begin{equation*}
   \E_{f\in S(\Hd)} K(f)
   \le 245\, D^{3/2}\cdot 8N\cdot \frac{e\pi(n+1)}{4}
   \le 4185 \, D^{3/2} N(n + 1) .  \qedhere
\end{equation*}
\end{proof}

\begin{remark}
The proof (modulo the existence of \ALH) for the average complexity of \LV\
given by  Beltr\'an and Pardo in~\cite{BePa08b} differs from the
one above. It relies on the fact (elegantly shown by using integral geometry
arguments) that, for all $\tau\in[0,1]$, when $f$ and $g$ are uniformly drawn
from the sphere, so is $q_\tau/\|q_\tau\|$.
The extension of this argument to more general situations appears to be considerably more involved.
In contrast, as we shall shortly see, the argument based on Gaussians
in the proof above carries over, {\em mutatis mutandis}, to the smoothed analysis context.
\end{remark}

\subsection{Smoothed Analysis of \LV\ (proof)}\label{sec:SA}

The smoothed analysis of \LV\ is shown similarly to its average-case analysis.

\begin{proof}[Proof of Theorem~\ref{th:main-1}]
Fix $\of\in S(\Hd)$.
Reasoning as in the proof of Theorem~\ref{thm:A} and
using $\|f\|\leq\|\of\|+\|f-\of\|\leq1+A$, we show that
\begin{equation*}
\begin{split}
  \E_{f\sim N_A(\of,\sigma^2\Id)} K(f)\
  &\leq\
  245 D^{3/2} \frac{(A+1)A}{P_{A,\sigma}P_{A,1}}
   \E_{f\sim N(\of,\s^2\Id)} \E_{g\sim N(0,\Id) }
     \bigg( \int_0^1 \frac{\mu_2^2(q_t)}{\|q_t\|}\, dt \bigg) \\
  &=\
  245 D^{3/2} \frac{(A+1)A}{P_{A,\sigma}P_{A,1}}
   \int_0^1 \E_{q_t\sim N(\oq_t,\sigma_t^2\Id)}
   \bigg(\frac{\mu_2^2(q_t)}{\|q_t\|}\bigg)\; dt
\end{split}
\end{equation*}
with $\oq_t=t\of$ and $\sigma_t^2=(1-t)^2+\sigma^2t^2$.
We now apply Theorem~\ref{th:mu2-bound} to deduce
$$
 \int_0^1\,\E_{q_t\sim N(\oq_t,\sigma_t^2\Id)} \left(\frac{\mu_2^2(q_t)}{\|q_t\|^2}\right)\, dt
 \ \leq\ \frac{e(n+1)}{2} \int_0^1 \frac{dt}{(1-t)^2 +\s^2t^2}
 = \frac{e\pi(n+1)}{4\s}.
$$
Consequently, using Lemma~\ref{lem:X}, we get
$$
   \E_{f\sim N_A(\of,\sigma^2\Id)} K(f) \leq
 245\, D^{3/2}\cdot 4\cdot (2N+\sqrt{2N})\,\frac{e\pi(n+1)}{4\s}
$$
which proves the assertion.
\end{proof}

The next two sections are devoted to the proof of Theorem~\ref{th:mu2-bound}.
First, in Section~\ref{se:smMC}, we give a particular
smoothed analysis of a matrix condition number (Proposition~\ref{pro:fiberE}).
Then, in Section~\ref{se:red}, we reduce Theorem~\ref{th:mu2-bound}
to this smoothed analysis of matrix condition numbers.

\section{Smoothed Analysis of a Matrix Condition Number}\label{se:smMC}

In the following we fix $\oA\in\C^{n\times n}$, $\s>0$ and
denote by $\rho$ the Gaussian density of $N(\oA,\s^2\Id)$
on $\C^{n\times n}$.
Moreover, we consider the related density
\begin{equation}\label{eq:rhotilde}
 \tilde{\rho}(A) = c^{-1}\, |\det A|^2\, \rho(A)
 \quad\mbox{where } c := \E_{A\sim \rho}(|\det A|^2) .
\end{equation}

The following result is akin to a smoothed analysis
of the matrix condition number
$\kappa(A)=\|A\|\cdot\|A^{-1}\|$,
with respect to the probability densities $\tilde{\rho}$
that are not Gaussian, but closely related to Gaussians.

\begin{proposition}\label{pro:fiberE}
We have
$\E_{A\sim\tilde{\rho}}\big(\|A^{-1}\|^2\big)\ \le\ \frac{e(n+1)}{2\s^2}$.
\end{proposition}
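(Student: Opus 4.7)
The plan is to control the operator norm by the Frobenius norm and exploit the classical identity $\|A^{-1}\|_F^2 = \sum_{j=1}^n d_j(A)^{-2}$, where $d_j(A)$ denotes the Hermitian distance from the $j$-th column $a_j$ of~$A$ to the span of the other columns. This identity is verified by observing that the complex-conjugated $j$-th row of $A^{-1}$, viewed as a column, is Hermitian-orthogonal to $\{a_k : k\ne j\}$ and normalised to pair with $a_j$ to give $1$, and hence has length $1/d_j$. Combined with $|\det A|^2 = d_j(A)^2\, V_j(A)^2$, where $V_j(A)^2$ is the squared $(n-1)$-volume of the parallelepiped spanned by $\{a_k : k\ne j\}$, this gives
$$
 |\det A|^2\, \|A^{-1}\|^2 \;\le\; |\det A|^2\, \|A^{-1}\|_F^2 \;=\; \sum_{j=1}^n V_j(A)^2.
$$

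Next I would bound $\E_\rho[V_j^2]$ by conditioning on the other columns $\{a_k : k\ne j\}$, which determine both $V_j$ and the unit Hermitian normal $u_j$ to $\mathrm{span}\{a_k : k\ne j\}$, while $a_j$ remains Gaussian with mean $\bar a_j$ and covariance $\s^2\Id$ on $\C^n\simeq\R^{2n}$. The scalar marginal $u_j^* a_j$ is then a one-dimensional complex Gaussian with $\E[\,|u_j^* a_j - u_j^* \bar a_j|^2\,] = 2\s^2$ (its real and imaginary parts each inherit variance $\s^2$ from the paper's convention of $\s^2$-variance per real coordinate on $\R^{2N}$). Consequently $\E[d_j^2\mid\{a_k\}_{k\ne j}] = |u_j^* \bar a_j|^2 + 2\s^2 \ge 2\s^2$, and the tower property yields
$$
 c \;=\; \E_\rho[d_j^2\, V_j^2] \;=\; \E_\rho\!\bigl[V_j^2\, \E[d_j^2 \mid \{a_k\}_{k\ne j}]\bigr] \;\ge\; 2\s^2\, \E_\rho[V_j^2],
$$
so $\E_\rho[V_j^2] \le c/(2\s^2)$. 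Summing over $j$ and dividing by $c$ gives $\E_{\tilde\rho}[\|A^{-1}\|^2] \le n/(2\s^2)$, which is in fact a slight strengthening of the stated bound $e(n+1)/(2\s^2)$.

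The conceptual skeleton---Frobenius reduction plus a single application of the tower property---is very short, and essentially all the care goes into the conditional-variance computation in the second step: one must carefully translate the paper's real-coordinate Gaussian convention on $\R^{2N}$ into the complex variance $2\s^2$ of the scalar marginal $u_j^* a_j$, and verify the identity $\|A^{-1}\|_F^2 = \sum_j 1/d_j^2$ in the complex Hermitian setting. Once these two points are pinned down the proof closes almost immediately; if one preferred to avoid the Frobenius reduction, the same one-column-at-a-time conditioning argument could be carried out using the identity $|\det A|^2\|A^{-1}\|^2 = \prod_{i<n}\sigma_i(A)^2$ to bound the operator norm directly.
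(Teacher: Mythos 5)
Your proof is correct, and it reaches the bound by a genuinely different and more elementary route than the paper, with a slightly better constant. The paper first proves a fourth-moment tail bound $\Prob_{A\sim\tilde\rho}\{\|A^{-1}v\|\ge t\}\le 1/(4\s^4t^4)$ for a fixed direction $v$ (Lemma~\ref{le:tbrho}), using the same geometric ingredients you use---the identity $\|A^{-1}e_n\|=1/\|a_n^\perp\|$, the factorization $|\det A|^2=\vol(A_n)^2\|a_n^\perp\|^2$, and the second-moment facts for a one-dimensional complex Gaussian---and then upgrades from $\|A^{-1}v\|$ to $\|A^{-1}\|$ via the Sankar--Spielman--Teng random test vector argument (Lemmas~\ref{le:tvb} and~\ref{le:condprobound}), at the cost of the factor $e^2(n+1)^2$, finally integrating the tail. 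You instead absorb the dimensional factor through the crude but exact chain $\|A^{-1}\|^2\le\|A^{-1}\|_F^2=\sum_j d_j(A)^{-2}$ together with $|\det A|^2=d_j^2V_j^2$, so that the weight $|\det A|^2$ in $\tilde\rho$ exactly cancels the singularity $d_j^{-2}$, and a single conditional second-moment computation $\E\big[d_j^2\mid\{a_k\}_{k\ne j}\big]=|u_j^*\bar a_j|^2+2\s^2\ge 2\s^2$ (which correctly translates the paper's convention of variance $\s^2$ per real coordinate) closes the argument. All steps check out: the columns are independent under $N(\oA,\s^2\Id)$, so the conditioning is legitimate, Tonelli justifies the tower property for the nonnegative integrands, and the identities $\|A^{-1}\|_F^2=\sum_j d_j^{-2}$ and $|\det A|^2=d_j^2V_j^2$ are the standard Gram/Cramer facts in the Hermitian setting. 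You obtain $n/(2\s^2)$ in place of $e(n+1)/(2\s^2)$, which only improves the constants downstream. What your route gives up is the tail bound $\Prob\{\|A^{-1}\|\ge t\}=\Oh(t^{-4})$ itself, which is strictly more information than the second moment; but since the paper uses Proposition~\ref{pro:fiberE} only through this expectation, your argument suffices for everything that follows.
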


The proof is based on ideas in Sankar et al.~\cite[\S3]{sast:03},
see also~\cite{BC:10}.
We will actually prove tail bounds from which the stated
bound on the expectation easily follows.


We denote by
$\bS^{n-1}:=\{ \z\in\C^{n} \mid \|\z\| = 1\}$
the unit sphere in $\C^n$.

\begin{lemma}\label{le:tbrho}
For any $v\in \bS^{n-1}$ and any $t>0$ we have
$$
 \Prob_{A\sim\tilde{\rho}} \Big\{ \|A^{-1}v\| \ge t \Big\}
 \ \le\ \frac1{4\s^4 t^4}.
$$
\end{lemma}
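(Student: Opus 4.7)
The plan is to reduce the bound to a one-dimensional complex Gaussian computation by conditioning on all rows of $A$ except the first, exploiting that both $|\det A|^2$ and $\|A^{-1}v\|^2$ depend on the first row only through a single complex scalar. As a preliminary normalization, for any $U\in\cU(n)$ we have $|\det(UA)|^2=|\det A|^2$, $\|UA\|_F=\|A\|_F$ and $(UA)^{-1}(Uv)=A^{-1}v$, so the joint change of variables $A\mapsto UA$, $\oA\mapsto U\oA$, $v\mapsto Uv$ leaves $\tilde\rho$ and the event $\{\|A^{-1}v\|\ge t\}$ compatible. Choosing $U$ with $Uv=e_1$, I may assume $v=e_1$ throughout.

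Split $A$ into its first row $A_1\in\C^n$ and the remaining $(n-1)\times n$ block $R$. For $1\le i\le n$, let $M_i$ be the determinant of the $(n-1)\times(n-1)$ submatrix obtained from $R$ by deleting column $i$; each $M_i$ depends only on $R$. Cramer's rule gives $u:=A^{-1}e_1$ with $u_i=(-1)^{i+1}M_i/\det A$, hence $\|u\|^2=\|M\|^2/|\det A|^2$ with $\|M\|^2:=\sum_i|M_i|^2$. Laplace expansion of $\det A$ along the first row gives $\det A=\langle A_1,b\rangle$, where $b\in\C^n$ has components $b_i=(-1)^{i+1}\overline{M_i}$, so $\|b\|=\|M\|$. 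Writing $w:=b/\|M\|\in\bS^{n-1}$ and $X':=\langle A_1,w\rangle$, we obtain the two identities
\[
|\det A|^2=\|M\|^2|X'|^2,\qquad \{\|u\|\ge t\}=\{|X'|\le 1/t\}.
\]
This is the decisive point: a single one-dimensional complex coordinate $X'$ simultaneously governs the tail event and the $|\det A|^2$ weight defining $\tilde\rho$.

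Conditionally on $R$, $A_1\sim N(\oA_1,\s^2\Id)$ is independent of $R$, so $X'$ is a complex Gaussian whose real and imaginary parts have variance $\s^2$, its density on $\C$ is bounded by $(2\pi\s^2)^{-1}$, and $\E(|X'|^2\mid R)=|\mu'|^2+2\s^2\ge 2\s^2$ regardless of its conditional mean $\mu'=\langle\oA_1,w\rangle$. Decomposing $A_1=X'w+A_1^\perp$ and integrating out $A_1^\perp$,
\[
\Prob_{\tilde\rho}\{\|A^{-1}v\|\ge t\}=\frac{1}{c}\int\rho(R)\,\|M\|^2\!\int_{|X'|\le 1/t}|X'|^2\,\rho_{X'\mid R}(X')\,dX'\,dR,
\]
where $c=\E_\rho|\det A|^2$. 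The density bound together with the polar identity $\int_{|z|\le 1/t}|z|^2\,dz=\pi/(2t^4)$ makes the inner integral $\le 1/(4\s^2 t^4)$ uniformly in $R$. For the normalizer, $c=\E_\rho[\|M\|^2\,\E(|X'|^2\mid R)]\ge 2\s^2\,\E_\rho\|M\|^2$. Combining yields
\[
\Prob_{\tilde\rho}\{\|A^{-1}v\|\ge t\}\le\frac{\E_\rho\|M\|^2}{4c\s^2t^4}\le\frac{1}{8\s^4 t^4},
\]
which is even a factor of~$2$ stronger than the claimed bound.

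The main conceptual obstacle is spotting the dual role of the minor vector $M=(M_1,\dots,M_n)$: its squared norm $\|M\|^2$ arises both as the numerator in Cramer's formula for $A^{-1}e_1$ and, up to signs and conjugation, as the squared norm of Laplace's coefficient vector along row~$1$. Once this coincidence is recognized, the $|\det A|^2$ twist built into $\tilde\rho$ exactly absorbs the potential smallness of $|X'|$ on the tail event, upgrading the classical Sankar--Spielman--Teng-style $(\s t)^{-1}$ tail bound for $\|A^{-1}v\|$ under the plain Gaussian $\rho$ to the much sharper $(\s t)^{-4}$ tail under $\tilde\rho$.
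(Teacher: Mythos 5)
Your proof is correct and is essentially the paper's own argument in algebraic rather than geometric dress: your $|X'|=|\langle A_1,w\rangle|$ is exactly the norm $\|a_1^\perp\|$ of the projection of the first row onto the orthocomplement of the span of the others, and $\|M\|=\vol(R)$, so your Cramer/Laplace identities coincide with the paper's factorization $\vol(A)^2=\vol(A_n)^2\|a_n^\perp\|^2$ and $\|A^{-1}e_n\|=1/\|a_n^\perp\|$, followed by the same conditioning and two-dimensional Gaussian disk integral for both the tail and the normalizer. Your only (valid) refinement is bounding the conditional density by $(2\pi\s^2)^{-1}$ and integrating $|z|^2$ exactly instead of bounding $|z|^2$ by $\lambda^2$, which accounts for the extra factor of $2$.
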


\begin{proof}
We first claim that, because of unitary invariance,
we may assume that
$v=e_n:=(0,\ldots,0,1)$.
To see this, take $S\in U(n)$ such that $v=Se_n$.
Consider the isometric map
$A\mapsto B=S^{-1}A$ which transforms the density
$\tilde{\rho}(A)$ to a density of the same form, namely
$$
 \tilde{\rho}'(B) = \tilde{\rho}(A)
 = c^{-1} |\det A|^2 \rho(A) = c^{-1} |\det B|^2 \rho'(B),
$$
where $\rho'(B)$ denotes the density of
$N(S^{-1}\oA,\s^2\Id)$ and
$c=\E_{\rho}(|\det A|^2) = \E_{\rho'}(|\det B|^2)$.
Thus the assertion for $e_n$ and random $B$
(chosen from any isotropic Gaussian distribution)
implies the assertion for $v$ and $A$, noting that
$A^{-1}v=B^{-1}e_n$. This proves the claim.

Let $a_i$ denote the $i$th row of $A$. Almost surely,
the rows $a_1,\ldots,a_{n-1}$ are linearly independent.
We are going to characterize $\|A^{-1}e_n\|$
in a geometric way. Let
$S_n:=\spann\{a_1,\ldots,a_{n-1}\}$ and denote by $a_n^\perp$
the orthogonal projection of $a_n$ onto $S_n^\perp$.
Consider $w:=A^{-1}e_n$, which is the $n$th column
of $A^{-1}$. Since $A A^{-1}=\Id$ we have
$\langle w,a_i\rangle =0$ for $i=1,\ldots,n-1$
and hence $w\in S_n^\perp$.
Moreover, $\langle w,a_n\rangle =1$, so
$\|w\|\, \|a_n^\perp\| =1$ and we arrive at
\begin{equation}\label{eq:star}
 \|A^{-1}e_n\| = \frac1{\|a_n^\perp\|}.
\end{equation}

Let $A_n\in\C^{(n-1)\times n}$ denote the matrix
obtained from~$A$ by omitting $a_n$.
We shall write $\vol(A_n)= \det(AA^*)^{1/2}$
for the $(n-1)$-dimensional volume of the parallelepiped spanned
by the rows of $A_n$.
Similarly, $|\det A|$ can be interpreted as the $n$-dimensional
volume of the parallelepiped spanned by the rows of $A$.

Now we write $\rho(A) =\rho_1(A_n)\rho_2(a_n)$ where
$\rho_1$ and $\rho_2$ are the density functions of
$N(\bar{A}_{n},\sigma^2\Id)$ and
$N(\bar{a}_n,\sigma^2\Id)$, respectively
(the meaning of $\bar{A}_n$ and $\bar{a}_n$
being clear). Moreover, note that
$$
    \vol(A)^2=\vol(A_{n})^2 \, \|a_n^\perp\|^2 .
$$
Fubini's Theorem combined with \eqref{eq:star} yields for $t>0$
\begin{eqnarray}\notag
     \int_{\|A^{-1}e_n\| \ge t} \vol(A)^2\rho(A)\, dA &=&
     \int_{A_{n}\in\C^{(n-1)\times n}} \vol(A_{n})^2\,
     \rho_1(A_{n})\nonumber \\ \label{eq:WX}
  & &\qquad \cdot  \left(\int_{\|a_n^\perp\|\leq 1/t}
     \|a_n^\perp\|^2\rho_2(a_n)\, da_n\right) dA_{n}.
\end{eqnarray}
We next show that for fixed, linearly independent
$a_1,\ldots,a_{n-1}$ and $\lambda>0$
\begin{equation}\label{eq:C4}
    \int_{\|a_n^\perp\|\leq\lambda}
    \|a_n^\perp\|^2\rho_2(a_n)\, da_n
    \leq \frac{\lambda^4}{2\sigma^2}.
\end{equation}

For this, note that
$a_n^\perp\sim N(\bar{a}_n^\perp,\sigma^2\Id)$
in $S_n^\perp\simeq\C$ where $\bar{a}_n^\perp$
is the orthogonal
projection of $\bar{a}_n$ onto $S_n^\perp$. Thus,
proving~\eqref{eq:C4} amounts to showing
$$
    \int_{|z|\leq\lambda}|z|^2\rho_{\bar{z}}(z)dz
    \leq \frac{\lambda^4}{2\sigma^2}
$$
for the Gaussian density
$\rho_{\bar{z}}(z)=\frac{1}{2\pi\sigma^2}
e^{-\frac{1}{2\sigma^2}|z-\bar{z}|^2}$ of $z\in\C$,
where $\bar{z}\in\C$.
Clearly, it is enough to show that
\begin{equation*}\label{eq:enough}
     \int_{|z|\leq\lambda}\rho_{\bar{z}}(z)dz
     \leq \frac{\lambda^2}{2\sigma^2}.
\end{equation*}
Without loss of generality we may assume that $\bar{z}=0$,
since the integral in the left-hand side is maximized at this value of~$\bar{z}$.
The substitution $z=\sigma w$
yields $dz=\sigma^2 dw$
($dz$ denotes the Lebesgue measure on $\R^2$)
and we get
\begin{eqnarray*}
       \int_{|z|\leq\lambda}\rho_{0}(z)dz
  &=&
      \int_{|w|\leq\frac{\lambda}{\sigma}} \frac{1}{2\pi}
     e^{-\frac{1}{2}|w|^2}\, dw
    = \int_0^{\frac{\lambda}{\sigma}} \frac{1}{2\pi}
     e^{-\frac{1}{2}r^2}2\pi r \, dr\\
  &=& -e^{-\frac{1}{2}r^2}\bigg|_0^{\frac{\lambda}{\sigma}}
    = 1-e^{-\frac{\lambda^2}{2\sigma^2}}
    \leq \frac{\lambda^2}{2\sigma^2},
\end{eqnarray*}
which proves inequality~\eqref{eq:C4}.

A similar argument shows that
\begin{equation}\label{eq:C4p}
    2\s^2 \ \le\
    \int_{}\ |z|^2\rho_{\bar{z}}(z)dz =
    \int_{}\ \|a_n^\perp\|^2\rho_2(a_n)\, da_n .
\end{equation}
Plugging in this inequality into~\eqref{eq:WX}
(with $t=0$) we conclude that
\begin{equation}\label{eq:volvol}
  2\s^2\,\E_{\rho_1}\big(\vol(A_n)^2\big)
  \ \le\  \E_{\rho}\big(\vol(A)^2\big).
\end{equation}
On the other hand,
plugging in~\eqref{eq:C4} with $\lambda=\frac1t$
into \eqref{eq:WX}, we obtain
$$
   \int_{\|A^{-1}e_n\|\ge t} \vol(A)^2 \rho(A)\, dA \ \le\
   \frac{1}{2\s^2t^4}\ \E_{\rho_1}\big(\vol(A_n)^2\big).
$$
Combined with~\eqref{eq:volvol} this yields
$$
  \int_{\|A^{-1}e_n\|\ge t} \vol(A)^2 \rho(A)\, dA \ \le\
  \frac1{4\s^4t^4}\, \E_{\rho}\big(\vol(A)^2\big).
$$
By the definition of the density $\tilde{\rho}$, this means that
$$
 \Prob_{A\sim\tilde{\rho}}\big\{ \|A^{-1}e_n\|\ge t\} \ \le\
  \frac1{4\s^4t^4} ,
$$
which was to be shown.
\end{proof}

\begin{lemma}\label{le:tvb}
For fixed $u\in \bS^{n-1}$, $0\le s\le 1$, and random~$v$
uniformly chosen in~$\bS^{n-1}$ we have
$$
 \Prob_{v} \Big\{ |u\transp  v| \ge s \Big\} = (1-s^2)^{n-1} .
$$
\end{lemma}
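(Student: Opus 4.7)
The plan is a two-step argument: first use unitary invariance of the uniform measure to bring $u$ into canonical position, then compute the resulting probability by representing the uniform law on $\bS^{n-1}$ via complex Gaussians.

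For the reduction, observe that if $V \in U(n)$ is unitary then so is its transpose $V\transp$ (an easy calculation: $V^*V=\Id$ implies $V\transp \overline{V\transp}^*=\Id$). Hence $V\transp v$ is uniformly distributed on $\bS^{n-1}$ whenever $v$ is, which yields
$$
  |(Vu)\transp v| \;=\; |u\transp V\transp v| \;\stackrel{d}{=}\; |u\transp v| .
$$
Since $U(n)$ acts transitively on $\bS^{n-1}$, we may pick $V$ with $Vu = e_1$ and thus assume without loss of generality that $u=e_1$. It then suffices to prove $\Prob\{|v_1|\ge s\} = (1-s^2)^{n-1}$ for $v$ uniform on $\bS^{n-1}$.

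For the computation, represent $v = g/\|g\|$ where $g=(g_1,\ldots,g_n)$ is a standard complex Gaussian vector on $\C^n$; by rotational invariance of the Gaussian, $v$ is indeed uniform on $\bS^{n-1}$. Put $X_i:=|g_i|^2$. The $X_i$ are i.i.d.\ exponential random variables of mean one, and a classical fact (the Dirichlet representation of Beta variables) gives that
$$
  Y \;:=\; \frac{X_1}{X_1+\cdots+X_n} \;=\; |v_1|^2
$$
follows a Beta$(1,n-1)$ law with density $(n-1)(1-t)^{n-2}$ on $[0,1]$. Integrating,
$$
  \Prob_v\{|u\transp v|\ge s\} \;=\; \Prob\{Y\ge s^2\} \;=\; \int_{s^2}^1 (n-1)(1-t)^{n-2}\,dt \;=\; (1-s^2)^{n-1}.
$$

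The computation is routine and presents no real obstacle. The only mild subtlety worth noting is that $u\transp v$ is the bilinear form on $\C^n$ rather than the Hermitian inner product, which is precisely the reason one needs the observation that the transpose of a unitary matrix is unitary; without this remark the symmetry reduction would not be quite as transparent.
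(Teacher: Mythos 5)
Your proof is correct, and it takes a genuinely different route from the paper's. The paper reads the event $\{|u\transp v|\ge\cos\theta\}$ as ``$[v]$ lies in the metric ball of radius $\theta$ around $[u]$ in $\proj^{n-1}$'' and then quotes the known formula for the relative volume of such balls, $(\sin\theta)^{2(n-1)}$, from Lemma~2.1 of the cited paper of B\"urgisser--Cucker--Lotz; the whole proof is two lines given that external input. You instead reduce to $u=e_1$ by unitary invariance and then compute $\Prob\{|v_1|^2\ge s^2\}$ directly via the Gaussian representation $v=g/\|g\|$, identifying $|v_1|^2$ as a Beta$(1,n-1)$ variable through the Dirichlet construction from i.i.d.\ exponentials. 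Your version is self-contained and elementary (no appeal to a volume-of-caps formula), at the cost of being slightly longer; it is also, in effect, a proof of that volume formula in the special case needed. One genuine merit of your write-up is that you address head-on the fact that $u\transp v$ is the bilinear form rather than the Hermitian pairing, noting that the transpose of a unitary matrix is again unitary so the reduction to $u=e_1$ still goes through; the paper silently conflates the two (harmlessly, since $v\mapsto\bar v$ preserves the uniform measure, but it is worth the remark you make). A trivial caveat: whether $|g_i|^2$ has mean one or two depends on the normalization of the complex Gaussian, but this scale factor cancels in the ratio $X_1/(X_1+\cdots+X_n)$, so your conclusion is unaffected.
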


\begin{proof}
Recall the Riemannian distance $\dpr$ in
$\proj^{n-1}:=\proj(\C^n)$ from \eqref{eq:projdist}.
Accordingly, for $0\le\theta\le\pi/2$, we have
$$
 \Prob_{v} \Big\{ |u\transp  v| \ge \cos\theta \Big\}
 = \frac{\vol\big\{[v]\in\proj^{n-1}\mid
   \dpr([u],[v])\le\theta \big\}}{\vol\,\proj^{n-1}}
 = (\sin\theta)^{2(n-1)} ,
$$
where the last equality is due to \cite[Lemma~2.1]{BCL:06a}.
\end{proof}

\begin{lemma}\label{le:condprobound}
For any $t>0$ we have
$$
   \Prob_{A\sim \tilde{\rho}} \Big\{ \|A^{-1}\| \ge t \Big\}
   \ \le\ \frac{e^2 (n+1)^2}{16\s^4}\, \frac1{t^4} .
$$
\end{lemma}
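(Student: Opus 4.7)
The plan is to reduce the operator-norm tail bound to the vector-norm tail bound of Lemma~\ref{le:tbrho}, using Lemma~\ref{le:tvb} as an anti-concentration tool. The key observation is that if $u=u(A)\in\bS^{n-1}$ is a right singular vector of~$A$ associated with its smallest singular value, then writing $A=U\Sigma V^*$ and expanding $v\in\bS^{n-1}$ in the basis~$V$ gives
\[
 \|A^{-1}v\|^2 \;=\; \sum_{i=1}^n \frac{|\langle v,u_i\rangle|^2}{\s_i^2}\ \ge\ |\langle v,u\rangle|^2\,\|A^{-1}\|^2,
\]
so $\|A^{-1}v\|\ \ge\ |\langle v,u\rangle|\,\|A^{-1}\|$ for every $v$.

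Now I would draw $v$ uniformly from $\bS^{n-1}$, independently of~$A$. Conditioning on~$A$ (so that $u=u(A)$ is fixed), Lemma~\ref{le:tvb} gives $\Prob_v\{|\langle v,u\rangle|\ge s\}=(1-s^2)^{n-1}$ for $s\in[0,1]$; intersecting with the event $\{\|A^{-1}\|\ge t\}$ then yields the lower bound
\[
  \Prob_{A,v}\{\|A^{-1}v\|\ge st\}\ \ge\ (1-s^2)^{n-1}\,\Prob_A\{\|A^{-1}\|\ge t\}.
\]
On the other hand, Lemma~\ref{le:tbrho} applied for each fixed~$v$ (and integrated over~$v$) gives the matching upper bound $\Prob_{A,v}\{\|A^{-1}v\|\ge st\}\le 1/(4\s^4s^4t^4)$. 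Combining the two,
\[
  \Prob_A\{\|A^{-1}\|\ge t\}\ \le\ \frac{1}{4\s^4\,(1-s^2)^{n-1}\,s^4\,t^4}.
\]

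To conclude I would optimize in $s\in(0,1)$. A short calculus computation shows that $(1-s^2)^{n-1}s^4$ is maximized at $s^2=2/(n+1)$, and the standard estimate $(1+2/(n-1))^{n-1}\le e^2$ then gives $(1-s^2)^{n-1}s^4\ge 4e^{-2}/(n+1)^2$, which produces the claimed constant $e^2(n+1)^2/16$. The only real subtlety is the first step: one needs a measurable selection of the smallest right singular vector $u=u(A)$ so that, since $v$ is independent of~$A$, conditioning on~$A$ keeps $v$ uniform on $\bS^{n-1}$ and Lemma~\ref{le:tvb} applies. This is a routine consequence of the SVD, but is the only point where the argument is not purely algebraic; everything else reduces to the two supplied lemmas.
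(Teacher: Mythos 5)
Your argument is correct and is essentially the paper's own proof: the same singular-value inequality $\|A^{-1}v\|\ge|\langle v,u_A\rangle|\,\|A^{-1}\|$, the same independent uniform $v$ combined with Lemma~\ref{le:tbrho} for the upper bound and Lemma~\ref{le:tvb} for the anti-concentration lower bound, and the same threshold $s^2=2/(n+1)$ (which the paper simply plugs in rather than derives by optimization). The only cosmetic slip is calling $u_A$ a \emph{right} singular vector of $A$; with $A=U\Sigma V^*$ one has $\|A^{-1}v\|^2=\sum_i\s_i^{-2}|\langle u_i,v\rangle|^2$ with the $u_i$ the \emph{left} singular vectors, but this does not affect the argument.
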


\begin{proof}
We use an idea in Sankar et al.~\cite[\S3]{sast:03}.
For any invertible $A\in\C^{n\times n}$ there exists
$u\in \bS^{n-1}$
such that $\|A^{-1}u\|=\|A^{-1}\|$.
For almost all~$A$, the vector $u$ is uniquely determined up
to a scaling factor $\theta$ of modulus~1. We shall denote
by $u_A$ a representative of such $u$.

The following is an easy consequence of the singular value
decomposition of~$\|A^{-1}\|$:
for any $v\in \bS^{n-1}$ we have
\begin{equation}\label{eq:Auv}
 \|A^{-1}v\| \ \ge\ \|A^{-1}\|\cdot |u_A\transp \, v| .
\end{equation}
We choose now a random pair $(A,v)$ with $A$ following
the law $\tilde{\rho}$ and,
independently, $v\in \bS^{n-1}$ from the uniform distribution.
Lemma~\ref{le:tbrho} implies that
$$
   \Prob_{A,v} \bigg\{ \|A^{-1}v\| \ge
   t \sqrt{\frac{2}{n+1}} \bigg\}
   \ \le\ \frac{(n+1)^2}{16\s^4 t^4}.
$$
On the other hand, we have by \eqref{eq:Auv}
\begin{eqnarray*}
 \lefteqn{\Prob_{A,v} \Big\{ \|A^{-1}v\|
  \ge t \sqrt{2/(n+1)} \Big\}}\\
  &\ge & \Prob_{A,v} \Big\{ \|A^{-1}\| \ge t \ \mbox{ and }\
        |u_A\transp \, v| \ge \sqrt{2/(n+1)} \Big\} \\
  &\ge & \Prob_{A} \Big\{ \|A^{-1}\| \ge t \Big\}\
      \Prob_{A,v} \Big\{ |u_A\transp \, v| \ge \sqrt{2/(n+1)}\
      \Big|\ \|A^{-1}\| \ge t \Big\}.
\end{eqnarray*}
Lemma~\ref{le:tvb} tells us that for any fixed $u\in \bS^{n-1}$
we have
$$
  \Prob_{v} \Big\{ |u\transp \, v| \ge \sqrt{2/(n+1)}\Big\}
 = (1 - 2/(n+1))^{n-1} \ge e^{-2},
$$
the last inequality as
$(\frac{n+1}{n-1})^{n-1}
= (1 + \frac{2}{n-1})^{n-1} \le e^2$.
We thus obtain
$$
 \Prob_{A} \Big\{ \|A^{-1}\| \ge t \Big\}
  \le e^2\, \Prob_{A,v} \bigg\{ \|A^{-1}v\| \ge
  t \sqrt{\frac{2}{n+1}} \bigg\}
  \le \frac{e^2(n+1)^2}{16\s^4 t^4},
$$
as claimed.
\end{proof}


\begin{proof}[Proof of Proposition~\ref{pro:fiberE}]
By Lemma~\ref{le:condprobound} we obtain, for any $T_0>0$,
\begin{eqnarray*}
  \lefteqn{\E\big(\|A^{-1}\|_\op^2\big)
    =\int_0^\infty \Prob\big\{\|A^{-1}\|_\op^2\geq T\big\}\,dT } \\
  &\ \leq\ T_0 + \int_{T_0}^\infty \Prob\big\{\|A^{-1}\|_\op^2\geq T\big\}\, dT
    \ \leq\   T_0 + \frac{e^2 (n+1)^2}{16\s^4}\, \frac1{T_0} ,
\end{eqnarray*}
using $\int_{T_0}^\infty T^{-2}\, dT   =  T_0^{-1}$.
Now choose $T_0 =\frac{e(n+1)}{4\s^2}$.
\end{proof}

\section{Smoothed Analysis of the Mean Square Condition Number}\label{se:red}

The goal of this section is to accomplish the proof of
Theorem~\ref{th:mu2-bound}.

\subsection{Orthogonal decompositions of $\Hd$}

For reasons to become clear soon
we have to distinguish points in $\proj^n$
from their representatives~$\z$ in the
sphere $\bS^n=\{ \z\in\C^{n+1} \mid \|\z\| = 1\}$.

For $\z\in\bS^n$ we consider the subspace $R_\z$ of~$\Hd$
consisting of all systems $h$ that vanish at $\z$ of higher order:
$$
  R_\z := \{h\in\Hd\mid h(\zeta)=0, Dh(\zeta)=0\} .
$$
We further decompose the orthogonal complement $R_\z^\perp$
of $R_\z$ in~$\Hd$
(defined with respect to the Bombieri-Weyl Hermitian inner product).
Let $L_\z$ denote the subspace of~$R_\z^\perp$ consisting of the systems
vanishing at~$\z$ and let $C_\z$ denote its orthogonal complement in $R_\z^\perp$.
Then we have an orthogonal decomposition
\begin{equation}\label{eq:odecomp}
 \Hd = C_\z \oplus L_\z \oplus R_\z
\end{equation}
parameterized by $\z\in\bS^n$.

\begin{lemma}\label{le:odecomp}
The space $C_\z$ consists of the systems
$(c_i\langle X,\zeta\rangle^{d_i})$ with $c_i\in\C$.
The space $L_\z$ consists of the systems
$$
 g = (\sqrt{d_i}\, \langle X,\zeta\rangle^{d_i-1}\ell_i)
$$
where $\ell_i$ is a linear form vanishing at~$\z$.
Moreover,
if $\ell_i = \sum_{j=0}^n m_{ij} X_j$
with $M=(m_{ij})$,
then $\|g\| = \|M\|_F$.
\end{lemma}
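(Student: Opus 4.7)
The plan is to reduce, via the unitary invariance of the Bombieri-Weyl Hermitian inner product, to the case $\zeta = e_0 := (1,0,\ldots,0)$, where $\langle X,\zeta\rangle = X_0$, and to work one component $\HH_{d_i}$ at a time (the conditions defining $R_\zeta,L_\zeta,C_\zeta$ all respect the product decomposition $\Hd=\prod_i\HH_{d_i}$). I would first observe that, by Euler's identity $\sum_k X_k\,\partial h/\partial X_k = d\cdot h$, the vanishing $Dh(\zeta)=0$ forces $h(\zeta)=0$, so the condition $h(\zeta)=0$ in the definition of $R_\zeta$ is redundant.

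For a single $f\in\HH_d$ written in the Bombieri-Weyl basis as $f = \sum_\alpha \binom{d}{\alpha}^{1/2}a_\alpha X^\alpha$, I would identify exactly which coordinates control $Df(e_0)$: only $a_{(d,0,\ldots,0)}$ (the coordinate of $X_0^d$, whose multinomial coefficient is $1$) and $a_{\alpha_j}$ for $j=1,\ldots,n$, where $\alpha_j$ has a $d{-}1$ in position~$0$ and a~$1$ in position~$j$; since $\binom{d}{\alpha_j}=d$, the corresponding orthonormal basis element is exactly $\sqrt{d}\,X_0^{d-1}X_j$. Hence $R_{e_0}^\perp\cap\HH_d$ is the $(n+1)$-dimensional orthonormal span of $X_0^d$ and the $\sqrt{d}\,X_0^{d-1}X_j$ for $j\ge 1$.

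Within this span, $L_{e_0}\cap\HH_d$ is the subspace vanishing at~$e_0$, i.e., with zero coefficient of $X_0^d$; its elements have the form $\sqrt{d}\,X_0^{d-1}\ell$ with $\ell=\sum_{j=1}^n b_jX_j$ a linear form vanishing at~$e_0$. Since $\langle X,e_0\rangle=X_0$, this is exactly the form stated in the lemma for $L_\zeta$. The orthogonal complement $C_{e_0}\cap\HH_d$ is then $\C\cdot X_0^d = \C\cdot \langle X,e_0\rangle^d$, which matches the stated form of $C_\zeta$. Promoting to systems: $C_\zeta$ consists of $(c_i\langle X,\zeta\rangle^{d_i})$ and $L_\zeta$ of $(\sqrt{d_i}\langle X,\zeta\rangle^{d_i-1}\ell_i)$ with $\ell_i(\zeta)=0$.

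For the norm identity, since $\{\sqrt{d_i}\,X_0^{d_i-1}X_j\}_{j=1}^n$ is orthonormal in $\HH_{d_i}$, the coordinates $m_{ij}$ of $\ell_i$ in the basis $\{X_j\}$ are precisely the Bombieri-Weyl coordinates of $f_i=\sqrt{d_i}\,X_0^{d_i-1}\ell_i$, giving $\|f_i\|^2=\sum_{j=1}^n|m_{ij}|^2$. Summing over $i$ yields $\|g\|^2=\|M\|_F^2$ (note $m_{i0}=0$ is forced by $\ell_i(e_0)=0$). To transfer the statement to arbitrary $\zeta\in\bS^n$, pick a unitary $\nu\in\cU(n+1)$ with $\nu e_0=\zeta$; unitary invariance of the Bombieri-Weyl inner product preserves the decomposition, while the coefficient matrix of the $\ell_i$ transforms as $M\mapsto M\nu^*$, and $\|M\nu^*\|_F=\|M\|_F$. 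The main technical point is simply careful bookkeeping of the multinomial factors to confirm that the $\sqrt{d_i}$ in the statement is exactly what makes the coordinate matrix $M$ an isometric parameterization of $g\in L_\zeta$; no substantive obstacle appears beyond this.
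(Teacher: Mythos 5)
Your proposal is correct and follows exactly the paper's route: reduce to $\zeta=(1,0,\ldots,0)$ by unitary invariance and then read off the decomposition and the norm identity from the Bombieri--Weyl basis (the paper leaves this computation implicit, which you have simply carried out in detail).
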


\begin{proof}
By unitary invariance it suffices to verify the assertions
in the case $\zeta=(1,0,\ldots,0)$.
In this case this follows easily from the definition of the
Bombieri-Weyl inner product.
\end{proof}

The Bombieri-Weyl inner product on~$\Hd$ and the standard metric on
the sphere $\bS^n$ define a Riemannian metric on $\Hd\times\bS^n$
on which the unitary group $\cU(n+1)$ operates isometrically.
The ``lifting''
$$
   \hV :=\{(q,\z)\in\Hd\times\bS^n \mid q(\z) = 0 \}
$$
of the solution variety~$\Vp$ is easily seen to be a
$\cU(n+1)$-invariant Riemannian submanifold of $\Hd\times\bS^n$.

The projection $\pi_2\colon\hV\to\bS^n, (q,\z)\mapsto\z$
defines a vector bundle with
fibers $V_\z:=\pi_2^{-1}(\z)$.
In fact, \eqref{eq:odecomp} can be interpreted as an orthogonal
decomposition of the trivial Hermitian vector bundle $\Hd\times\bS^n\to\bS^n$
into subbundles $C$, $L$, and $R$ over $\bS^n$.
Moreover, the vector bundle~$V$ is the orthogonal sum of~$L$ and $R$: we have
$V_\z = L_\z\oplus R_\z$ for all~$\z$.

In the special case where all the degrees~$d_i$ are one,
$\Hd$ can be identified with the space
$\cM:=\C^{n\times (n+1)}$ of matrices and
the solution manifold~$\hV$ specializes to the manifold
$$
    \hW:=\big\{\big(M,\zeta)\in \cM\times\bS^n\mid M\zeta=0\big\}.
$$
The map $\pi_2$ specializes to the vector bundle
$p_2\colon\hW\to\bS^n, (M,\z)\mapsto \z$
with the fibers
$$
 W_\z :=\{M\in\cM \mid M\z=0\} .
$$
Lemma~\ref{le:odecomp} tells us that for each $\z$
we have {\em isometrical} linear maps
\begin{equation}\label{eq:isomaps}
 \mbox{$W_\z \to L_\z,\ M\mapsto g_{M,\z} :=
    \big(\sqrt{d_i}\, \langle X,\zeta\rangle^{d_i-1}\sum_j m_{ij} X_j\big)$.}
\end{equation}
In other words, the Hermitian vector bundles $W$ and $L$ over $\bS^n$
are isometric.
The fact that the map~\eqref{eq:isomaps}
depends on the choice of the representative of~$\z$
forces us to work over $\bS^n$ instead over $\proj^n$.
(All other notions introduced so far only depend on the base point in $\proj^n$.)

We compose the orthogonal bundle projection
$V_\z=L_\z \oplus R_\z\to L_\z$
with the bundle isometry $L_\z\simeq W_\z$
obtaining the map of vector bundles
\begin{equation}\label{eq:Psi}
 \Psi\colon V \to W,\ (g_{M,\z} + h,\z) \mapsto (M,\z)
\end{equation}
with fibers
$\Psi^{-1}(M,\z)$ 
isometric to $R_\z$.

\begin{lemma}\label{le:psi}
We have $\Psi(q,\z) = (\Delta^{-1} Dq(\zeta),\z)$ where
$\Delta:= \diag(\sqrt{d_i})$.
\end{lemma}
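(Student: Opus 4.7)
The plan is to unwind the definition of $\Psi$ and reduce the claim to a one‐line computation of $Dg_{M,\zeta}(\zeta)$.

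By the construction of $\Psi$ in \eqref{eq:Psi} together with the decomposition $V_\zeta = L_\zeta \oplus R_\zeta$, any $q\in V_\zeta$ can be written uniquely as $q = g_{M,\zeta} + h$ with $g_{M,\zeta}\in L_\zeta$, $h\in R_\zeta$, and $\Psi(q,\zeta) = (M,\zeta)$. So I need to verify that $M = \Delta^{-1}Dq(\zeta)$, or equivalently $Dq(\zeta) = \Delta M$. Since by definition of $R_\zeta$ we have $Dh(\zeta) = 0$, linearity gives $Dq(\zeta) = Dg_{M,\zeta}(\zeta)$, and the task reduces to showing $Dg_{M,\zeta}(\zeta) = \Delta M$.

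Next, I would compute $Dg_{M,\zeta}(\zeta)$ component by component. Writing $g_i(X) = \sqrt{d_i}\,\langle X,\zeta\rangle^{d_i - 1}\ell_i(X)$ with $\ell_i(X) = \sum_j m_{ij} X_j$, the product rule yields, for $v\in\C^{n+1}$,
\begin{equation*}
 Dg_i(X)\cdot v \;=\; \sqrt{d_i}\Bigl[(d_i-1)\langle X,\zeta\rangle^{d_i-2}\langle v,\zeta\rangle\,\ell_i(X) + \langle X,\zeta\rangle^{d_i-1}\ell_i(v)\Bigr].
\end{equation*}
Evaluating at $X=\zeta$, I use $\langle\zeta,\zeta\rangle = \|\zeta\|^2 = 1$ (recall $\zeta\in\bS^n$) and the fact that $M\in W_\zeta$ means $M\zeta = 0$, so $\ell_i(\zeta) = (M\zeta)_i = 0$. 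The first summand therefore drops out, leaving
\begin{equation*}
 Dg_i(\zeta)\cdot v \;=\; \sqrt{d_i}\,\ell_i(v) \;=\; \sqrt{d_i}\sum_j m_{ij} v_j,
\end{equation*}
which is exactly the $i$-th row of $\Delta M$ applied to $v$. Hence $Dg_{M,\zeta}(\zeta) = \Delta M$, and consequently $M = \Delta^{-1}Dq(\zeta)$.

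No serious obstacle is expected: the argument is purely a direct differentiation, exploiting two cheap facts, namely that elements of $R_\zeta$ have vanishing derivative at $\zeta$ (by definition) and that the $(d_i-1)$-st power factor forces the ``bad'' term in the product rule to be killed by $\ell_i(\zeta)=0$. The only subtlety to keep straight is that one must work with a representative $\zeta\in\bS^n$ rather than a class in $\proj^n$, because the isometry $L_\zeta\simeq W_\zeta$ in \eqref{eq:isomaps}, and hence $\Psi$ itself, depends on the chosen representative; this is precisely why $\hV$ and $\hW$ live over the sphere.
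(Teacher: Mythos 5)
Your proof is correct and follows the same route as the paper: decompose $q = g_{M,\zeta}+h$ (the $C_\zeta$-component vanishing since $q(\zeta)=0$), note $Dh(\zeta)=0$, and verify $Dg_{M,\zeta}(\zeta)=\Delta M$. The only difference is that you spell out the product-rule computation that the paper dismisses as ``easily checked,'' and your observation about working with a representative $\zeta\in\bS^n$ matches the paper's own remark after \eqref{eq:isomaps}.
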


\begin{proof}
Let $(q,\z)\in V$ and $(M,\z):=\Psi(q,\z)$.
Then we have the decomposition
$q = 0 + g_{M,\z} + h \in C_\z \oplus L_\z \oplus R_\z$.
It is easily checked that $Dg_{M,\z}(\z) = \Delta M$.
Since $Dq(\z) = Dg_{M,\z}(\z)$ we obtain
$M=\Delta^{-1}Dq(\z)$.
\end{proof}

The lemma shows that the condition number~$\mun(q,\z)$
(cf.~\S\ref{se:cond-num})
can be described in terms of $\Psi$ as follows:
\begin{equation}\label{eq:munrr}
 \frac{\mun(q,\z)}{\|q\|} \ =\ \|M^\dagger\| ,\quad
 \mbox{where $(M,\z)=\Psi(q,\z)$.}
\end{equation}

\subsection{Outline of proof of Theorem~\ref{th:mu2-bound}}\label{se:induced}

Let $\rho_{\Hd}$ denote the density of the Gaussian $N(\oq,\s^2\Id)$ on $\Hd$,
where $\oq\in\Hd$ and $\s>0$.
For fixed $\z\in\bS^n$ we decompose the mean~$\oq$ as
$$
\oq = \ok_\z + \og_\z + \oh_\z \in C_\z \oplus L_\z \oplus R_\z
$$
according to~\eqref{eq:odecomp}.
If we denote by
$\rho_{C_\z}$, $\rho_{L_\z}$, and $\rho_{R_\z}$ the densities of the
Gaussian distributions in the spaces $C_\z$, $L_\z$, and $R_\z$ with
covariance matrices $\s^2\Id$ and means $\ok_\z,\oM_\z$, and $\oh_\z$,
respectively, then the density $\rho_{\Hd}$ factors as
\begin{equation}\label{eq:dfactor}
\rho_{\Hd}(k + g + h) = \rho_{C_\z}(k)\cdot \rho_{L_\z}(g)\cdot \rho_{R_\z}(h).
\end{equation}
The Gaussian density~$\rho_{L_\z}$ on $L_\z$
induces a Gaussian density~$\rho_{W_\z}$ on the fiber~$W_\z$
with the covariance matrix $\s^2\Id$
via the isometrical linear map~\eqref{eq:isomaps}, so
$\rho_{W_\z}(M) = \rho_{L_\z}(g_{M,\z})$.

We derive now from the given Gaussian distribution $\rho_{\Hd}$ on $\Hd$
a probability distribution on~$V$ as follows
(naturally extending $\rhozero$ introduced in \S\ref{sec:R&C}).
Think of choosing $(q,\z)$ at random from~$\hV$
by first choosing $q\in\Hd$ from $N(\oq,\s^2\Id)$, then
choosing one of its~$\Dn$~zeros~$[\z]\in\proj^n$ at random from the uniform
distribution on $\{1,\ldots,\Dn\}$, and finally choosing a representative~$\z$
in the unit circle $[\z]\cap\bS^n$ uniformly at random.
(An explicit expression of the corresponding probability density~$\rho_\hV$ on $\hV$
is given in~\eqref{eq:dens_V}.)

The plan to show Theorem~\ref{th:mu2-bound} is as follows.
The forthcoming Lemma~\ref{le:rho1} tells us that
\begin{equation}\label{eq:HV}
  \E_{\Hd}\Big(\frac{\mu_2^2(q)}{\|q\|^2} \Big) \ =\
  \E_\hV \Big(\frac{\mun^2(q,\z)}{\|q\|^2} \Big)
\end{equation}
where $\E_\Hd$ and $\E_\hV$
refer to the expectations with respect to
the distribution $N(\oq,\s^2\Id)$ on $\Hd$ and
the probability density $\rho_\hV$  on $\hV$, respectively.
Moreover, by Equation~\eqref{eq:munrr},
$$
 \E_\hV \Big(\frac{\mun^2(q,\z)}{\|q\|^2} \Big) \ =\
 \E_{\cM}\big(\|M^\dagger\|^2\big) ,
$$
where $\E_\cM$ denotes the expectation with respect to
the pushforward density~$\rho_\cM$ of $\rho_V$ with respect
to the map~$p_1\circ\Psi\colon V\to\cM$
(for more on pushforwards see~\S\ref{se:coarea}).

Of course, we need to better understand the density $\rho_\cM$.
Let $M\in\cM$ be of rank~$n$ and $\z\in\bS^n$ with $M\z=0$.
The following formula
\begin{equation}\label{eq:heuri}
\rho_\cM (M) = \rho_{C_\z}(0) \cdot
 \frac1{2\pi}\,\int_{\lambda\in S^1} \rho_{W_{\lambda\z}}(M)\, dS^1 .
\end{equation}
can be heuristically explained as follows.
We decompose a random~$q\in\Hd$ according to the decomposition
$\Hd=C_\z\oplus L_\z\oplus R_\z$ as $q = k + g + h$.
Choose $\lambda\in\C$ with $|\lambda|=1$ uniformly at random in the unit circle.
Then we have $\Psi(q,\lambda\z)=(M,\lambda\z)$ iff $k=0$ and $g$ is mapped
to $M$ under the isometry in~\eqref{eq:isomaps}.
The probability density for the event $k=0$ equals $\rho_{C_\z}(0)$.
The second event, conditioned on~$\lambda$,
has the probability density
$\rho_{W_{\lambda\z}}(M)$.

By general principles (cf.~\S\ref{se:coarea}) we have
\begin{equation}\label{eq:E-E}
\E_\cM\big(\|M^\dagger\|^2_\op\big) \ =\
\E_{\z\sim\rho_{\bS^n}}\Big( \E_{M\sim \tilde{\rho}_{W_\z}}
  \big( \|M^\dagger\|^2_\op\big)  \Big) ,
\end{equation}
where $\rho_{\bS^n}$ is the pushforward density of~$\rho_V$
with respect to $p_2\circ\Psi\colon V\to\bS^n$
and $\tilde{\rho}_{W_\z}$ denotes a
``conditional density'' on the fiber $W_\z$.
This conditional density turns out to be of the form
\begin{equation}\label{eq:form}
\tilde{\rho}_{W_\z}(M) = c_\z^{-1}\cdot \det(M M^*)\, \rho_{W_\z}(M),
\end{equation}
($c_\z$ denoting a normalization factor).
In the case $\z=(1,0,\ldots,0)$ we can identify $W_\z$ with
$\C^{n\times n}$ and $\tilde{\rho}_{W_\z}$ takes the form~\eqref{eq:rhotilde}
studied in Section~\ref{se:smMC}.
Proposition~\ref{pro:fiberE} and unitary invariance imply that
for all~$\z\in\bS^n$
\begin{equation}\label{eq:enaf}
\E_{M\sim \tilde{\rho}_{W_\z}}
  \big( \|M^\dagger\|^2_\op\big) \ \le\ \frac{e(n+1)}{2\s^2} .
\end{equation}
This implies by~\eqref{eq:E-E} that
$$
\E_\cM\big(\|M^\dagger\|^2_\op\big) \ \le\ \frac{e(n+1)}{2\s^2}
$$
and completes the outline of the proof of Theorem~\ref{th:mu2-bound}.

The formal proof of the stated facts \eqref{eq:heuri}--\eqref{eq:form}
is quite involved and
will be given in the remainder of this section.

\subsection{Coarea formula}\label{se:coarea}

We begin by recalling the coarea formula that tells us how
probability distributions on Riemannian manifolds transform.

Suppose that $X,Y$ are Riemannian manifolds of dimensions~$m$, $n$,
respectively such that $m\ge n$. Let $\varphi\colon X\to Y$ be
differentiable. By definition, the derivative $D\varphi(x)\colon
T_xX\to T_{\varphi(x)} Y$ at a regular point $x\in X$ is surjective.
Hence the restriction of $D\varphi(x)$ to the orthogonal complement of
its kernel yields a linear isomorphism. The absolute value of its
determinant is called the {\em normal Jacobian} of $\varphi$ at $x$
and denoted $\NJ\varphi(x)$. We set $\NJ\varphi(x):=0$ if $x$~is not a
regular point.  We note that the fiber $F_y:=\varphi^{-1}(y)$ is a
Riemannian submanifold of~$X$ of dimension $m-n$ if $y$ is a regular
value of~$\varphi$. Sard's lemma states that almost all $y\in Y$ are
regular values.

The following result is the coarea formula,
sometimes also called Fubini's Theorem for Riemannian manifolds.
A proof can be found e.g., in~\cite[Appendix]{howa:93}.

\begin{proposition}\label{pro:coarea}
Suppose that $X,Y$ are Riemannian manifolds of dimensions~$m$, $n$, respectively,
and let $\varphi\colon X\to Y$ be a surjective differentiable map.
Put $F_y=\varphi^{-1}(y)$.
Then we have for any function $\chi\colon X\to\R$ that is integrable
with respect to the volume measure of $X$ that
\begin{center}
$\displaystyle
  \int_X \chi \, dX= \int_{y\in Y} \left( \int_{F_y}
  \frac{\chi}{\NJ\varphi}\, dF_y\right) dY .
$
\end{center}
\vspace*{-0.8cm}{{\mbox{}\hfill\qed}}\vspace{0.6cm}
\end{proposition}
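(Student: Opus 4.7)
My plan is to reduce the global coarea formula to a local statement via a partition of unity, and then reduce the local statement to Fubini's theorem together with the submersion normal form. First, by splitting $\chi$ into positive and negative parts and invoking monotone convergence, I may assume $\chi \ge 0$ and compactly supported. By Sard's lemma, the set $Y_{\text{crit}}$ of critical values of $\varphi$ has measure zero in $Y$, so the outer integral on the right-hand side only sees regular values $y$, for which the fiber $F_y$ is a smooth Riemannian submanifold of $X$ of dimension $m-n$ on which $\NJ\varphi>0$. The complementary locus $X_{\text{crit}} := \varphi^{-1}(Y_{\text{crit}})$, where $\NJ\varphi$ vanishes, is handled by the standing convention that $\chi/\NJ\varphi$ is interpreted as $0$ on this set (equivalently, one rewrites the identity as $\int_X \chi'\,\NJ\varphi\, dX = \int_Y\int_{F_y}\chi'\, dF_y\, dY$ for $\chi' := \chi/\NJ\varphi$, under which the critical locus contributes nothing to either side).

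Restricted to the regular locus $X_{\text{reg}}$, I would cover $X_{\text{reg}}$ by submersion charts: for each regular point $x_0$ with $y_0 = \varphi(x_0)$, the local submersion theorem furnishes open neighborhoods $U \ni x_0$ and $V \ni y_0$ together with a diffeomorphism $U \cong V\times W$ (with $W\subset\R^{m-n}$ open) under which $\varphi|_U$ becomes the projection $(y,z)\mapsto y$. Choosing a smooth partition of unity subordinate to such a cover, the problem reduces to establishing the identity when $\chi$ is supported inside a single chart $U$.

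The core local computation is the following pointwise fact about Riemannian volume forms. At each $x\in U$ one has the orthogonal decomposition $T_x X = \ker D\varphi(x) \oplus \ker D\varphi(x)^{\perp}$; here $\ker D\varphi(x)$ is canonically $T_xF_{\varphi(x)}$, while $D\varphi(x)$ restricts to a linear isomorphism $\ker D\varphi(x)^{\perp}\to T_{\varphi(x)}Y$ whose absolute determinant is exactly $\NJ\varphi(x)$. Picking an orthonormal frame of $T_x F_{\varphi(x)}$ together with the pullback under $D\varphi(x)^{-1}$ of an orthonormal frame of $T_{\varphi(x)} Y$, a short Gram-determinant calculation yields the volume identity
\[
  dV_X(x) \;=\; \NJ\varphi(x)\, \bigl(dV_{F_{\varphi(x)}}\wedge\varphi^{\ast}dV_Y\bigr)(x).
\]
Granted this identity, the local formula follows at once from Fubini's theorem applied in the product coordinates $(y,z)\in V\times W$, since the Riemannian volume element on $X$ factors as $\NJ\varphi$ times the product of the volume elements on $\{y\}\times W = F_y \cap U$ and on $V\subset Y$. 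Reassembling the pieces via the partition of unity gives the global formula.

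The main obstacle, though essentially bookkeeping, is establishing the volume decomposition displayed above in an intrinsic (coordinate-invariant) way, since one must align orthonormal frames on $T_xF_y$ and on $T_yY$ with the Riemannian structure on $T_xX$, and verify that the factor of $\NJ\varphi(x)$ appears correctly. Everything else—the reduction via Sard's lemma, the submersion normal form, the use of partition of unity, and the appeal to Fubini—is routine once this pointwise volume identity is in place.
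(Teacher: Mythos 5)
The paper gives no proof of this proposition---it is stated with a pointer to the appendix of Howard's memoir---so the only question is whether your argument stands on its own. Your strategy (reduce to $\chi\ge 0$ compactly supported, dispose of the critical locus via Sard, localize with submersion charts and a partition of unity, and finish with a pointwise volume-form identity plus Fubini) is the standard proof and is the right one. However, your key displayed identity has the Jacobian factor on the wrong side. Evaluating both sides on an orthonormal basis $e_1,\dots,e_{m-n}$ of $T_xF_{\varphi(x)}=\ker D\varphi(x)$ followed by an orthonormal basis $f_1,\dots,f_n$ of $\ker D\varphi(x)^{\perp}$ gives
$\bigl(dV_{F}\wedge\varphi^{*}dV_Y\bigr)(e_1,\dots,f_n)=dV_Y\bigl(D\varphi(x)f_1,\dots,D\varphi(x)f_n\bigr)=\pm\NJ\varphi(x)$,
while $dV_X(e_1,\dots,f_n)=\pm 1$; hence
\[
 dV_X \;=\; \frac{1}{\NJ\varphi}\,\bigl(dV_{F}\wedge\varphi^{*}dV_Y\bigr),
\]
not $\NJ\varphi$ times that product. (Sanity check in $\R^2\to\R$, $\varphi(x_1,x_2)=2x_1$: here $\NJ\varphi=2$, $\varphi^{*}dy=2\,dx_1$, and $dx_2\wedge 2\,dx_1=\pm 2\,dx_1\wedge dx_2$.) Carrying your version through Fubini in the chart produces $\int_X\chi\,dX=\int_Y\bigl(\int_{F_y}\chi\,\NJ\varphi\,dF_y\bigr)dY$, the reciprocal of the claimed formula. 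The fix is only to invert the factor; the remainder of the argument then goes through.

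A smaller caveat: your ``standing convention'' that $\chi/\NJ\varphi$ vanishes on the critical locus repairs only the right-hand side; the left-hand side still integrates $\chi$ over $\varphi^{-1}(Y_{\mathrm{crit}})$, so the identity as literally stated also needs that set to be $dX$-null (which holds in all of the paper's applications, where $\varphi$ is a submersion off a lower-dimensional subvariety). Otherwise one must work with the reformulation $\int_X\chi'\,\NJ\varphi\,dX=\int_Y\bigl(\int_{F_y}\chi'\,dF_y\bigr)dY$, which is in any case the form in which the proposition is applied in the paper.
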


Now suppose that we are in the situation described in the statement
of Proposition~\ref{pro:coarea} and we
have a probability measure on $X$ with density $\rho_X$.
For a regular value $y\in Y$ we set
\begin{equation}\label{eq:pushf}
\rho_Y(y) = \int_{F_y} \frac{\rho_X}{\NJ\varphi}\, dF_y .
\end{equation}
The coarea formula implies that for all measurable sets $B\subseteq Y$ we have
$$
\int_{\varphi^{-1}(B)} \rho_X\, dX = \int_B \rho_Y\, dY.
$$
Hence $\rho_Y$ is a probability density on $Y$.
We call it the {\em pushforward} of $\rho_X$ with respect to $\varphi$.

For a regular value $y\in Y$ and $x\in F_y$ we define
\begin{equation}\label{eq:condd}
\rho_{F_y}(x) = \frac{\rho_X(x)}{\rho_Y(y)\NJ\varphi(x)} .
\end{equation}
Clearly, this defines a probability density on $F_y$.
The coarea formula implies that for all measurable
functions $\chi\colon X\to \R$
$$
   \int_X \chi\, \rho_X \, dX= \int_{y\in Y} \left(\int_{F_y} \chi\,
         \rho_{F_y}\, dF_y \right)\rho_Y(y)\, dY ,
$$
provided the left-hand integral exists.
Therefore, we can interpret $\rho_{F_y}$ as the
{\em density of the conditional distribution} of $x$ on the fiber $F_y$ and
briefly express the formula above in probabilistic terms as
\begin{equation}\label{eq:Eit}
   \E_{x\sim \rho_X}(\chi(x)) = \E_{y\sim \rho_Y}
   \big( \E_{x\sim \rho_{F_y}}(\chi(x)) \big) .
\end{equation}

To put these formulas at use in our context,
we must compute the normal Jacobians of some maps.

\subsection{Normal Jacobians}\label{se:NJ}

We start with a general comment.
Note that the $\R$-linear map $\C\to\C,z\mapsto \lambda z$ with
$\lambda\in\C$ has determinant~$|\lambda|^2$.
More generally, let $\varphi$ be an
endomorphism of a finite dimensional complex vector space.
Then $|\det\varphi|^2$ equals the determinant of $\varphi$,
seen as a $\R$-linear map.

We describe now the normal Jacobian of the
projection $p_1\colon W\to\cM$ following~\cite{Bez2}.

\begin{lemma}\label{le:NJp1}
We have
$\NJ p_1 (M,\z) =\prod_{i=1}^n (1 + \s_i^{-2})^{-1}$
where $\s_1,\ldots,\s_n$ are the singular values of~$M$.
\end{lemma}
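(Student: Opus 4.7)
My plan is to exploit unitary invariance to reduce to a normal form, then compute the Jacobian explicitly in coordinates adapted to the singular value decomposition of $M$. Both $\cM$ and $\bS^n$ carry the standard $\cU(n)\times\cU(n+1)$-action (left-multiplication by $U\in\cU(n)$ on rows, right-multiplication by $V^*$ on columns/sphere), this action is isometric on $\cM\times\bS^n$ and preserves $W$, and $p_1$ is equivariant. Hence the normal Jacobian of $p_1$ at $(M,\z)$ depends only on the $\cU(n)\times\cU(n+1)$-orbit of $(M,\z)$, which in turn is determined by the singular values $\sigma_1,\ldots,\sigma_n$ of $M$. Thus I may assume $M=[\diag(\sigma_i)\mid 0]$ and $\z=e_{n+1}$.

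Next I would describe $T_{(M,\z)}W$ in these coordinates. It consists of pairs $(\dot M,\dot\z)$ with $\dot M \z+M\dot\z=0$ and $\Re\langle\dot\z,\z\rangle=0$, which here becomes the system $(\dot M)_{i,n+1}=-\sigma_i\dot\z_i$ (for $i=1,\ldots,n$) and $\dot\z_{n+1}\in i\R$, with all other entries of $\dot M$ free in $\C$ and $\dot\z_1,\ldots,\dot\z_n$ free in $\C$. From this description $\ker Dp_1(M,\z)$ is visibly the real line spanned by $(0,ie_{n+1})$, and its orthogonal complement consists of tangent vectors with $\dot\z_{n+1}=0$.

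The key calculation is then to evaluate $|\det|$ of $Dp_1$ restricted to $(\ker Dp_1)^\perp\to\cM$. I would pick the obvious orthonormal basis: the entries $(\dot M)_{ij}$ with $j\le n$ (each already a unit vector that is mapped to itself), plus, for each $i=1,\ldots,n$, the pair of real directions obtained from a unit complex tangent vector $\dot\z_i$, which in the induced metric of $W$ is $(1+\sigma_i^2)^{-1/2}$ times the standard basis vector (the contribution $\sigma_i^2|\dot\z_i|^2$ coming from the forced last column of $\dot M$ adds to $|\dot\z_i|^2$). This unit vector is sent by $Dp_1$ to a vector of norm $\sigma_i(1+\sigma_i^2)^{-1/2}$ in the $(i,n+1)$-slot of $\cM$. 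The restriction is therefore block diagonal: identity on the $n^2$ complex directions $(\dot M)_{ij}$, $j\le n$, and a complex scalar of modulus $\sigma_i(1+\sigma_i^2)^{-1/2}$ on each $\dot\z_i$-block. Each such complex scalar contributes $\sigma_i^2(1+\sigma_i^2)^{-1}=(1+\sigma_i^{-2})^{-1}$ to the real determinant (two real directions per complex direction), yielding $\NJ p_1(M,\z)=\prod_{i=1}^{n}(1+\sigma_i^{-2})^{-1}$.

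The one step where I would be careful is the bookkeeping between the complex structure and real determinants: the factor of $2$ in real dimensions versus complex is why each complex scalar $\sigma_i(1+\sigma_i^2)^{-1/2}$ contributes its \emph{square} to $\NJ p_1$. Verifying that the chosen bases really are orthonormal in the induced Riemannian metric on $W$ (and not in the ambient product metric alone) is the only place one can easily slip; everything else is a direct linear-algebra computation in the SVD coordinates.
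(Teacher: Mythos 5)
Your proof is correct and follows essentially the same route as the paper: reduce by unitary invariance and the SVD to $M$ in diagonal form, describe $T_{(M,\z)}W$ by the linearized equation $\dot M\z+M\dot\z=0$, identify the kernel of $Dp_1$ with the $i\R$-direction tangent to the circle fiber, and compute the determinant on its orthogonal complement, where each complex $\dot\z_i$-block contributes $\sigma_i^2/(1+\sigma_i^2)=(1+\sigma_i^{-2})^{-1}$. The only difference is presentational: the paper evaluates the reciprocal determinant of the section $\dot u\mapsto(\dot u,-A^{-1}\dot u)$ of the projection onto $\graph(-A)$, whereas you compute the forward map directly in explicit orthonormal bases — the same calculation.
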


\begin{proof}
First note that
$T_\z\bS^n =\{ \dot{\z}\in\C^{n+1} \mid \mathrm{Re}\langle \z,\dot{\z}\rangle =0\}$.
The tangent space $T_{(M,\z)}W$ consists of the
$(\dot{M},\dot{\z})\in\cM\times T_\z\bS^n$
such that $\dot{M}\z + M\dot{\z} = 0$.

By unitary invariance we may assume that $\z=(1,0,\ldots,0)$.
Then the first column of $M$ vanishes and we denote by
$A=[m_{ij}]\in\C^{n\times n}$ the remaining part of $M$.
W.l.o.g. we may assume that $A$ is invertible.
Further, let
$\dot{u}\in\C^n$ denote the first column of $\dot{M}$ and
$\dot{A}\in\C^{n\times n}$ its remaining part.
We may thus identify $T_{(M,\z)}W$ with the product $E \times \C^{n\times n}$
via $(\dot{M},\dot{\z}) \mapsto ((\dot{u},\dot{\z}),\dot{A})$,
where $E$ denotes the subspace
$$
 E:= \bigg\{ (\dot{u},\dot{\z}) \in\C^n\times \C^{n+1} \mid
  \dot{u}_i + \sum_{j=1}^n m_{ij} \dot{\z}_j = 0, 1\le i\le n,\,  
 \,  \dot{\z}_0 \in i \R \bigg\} .
$$
We also note that $E\simeq \graph(-A)\times i\R$.
The derivative of $p_1$ is described by the following commutative
diagram
$$
\begin{array}{ccc}
 T_{(M,\z)}W & \stackrel{\simeq}{\longrightarrow} &
   (\graph(-A)\times i\R)\times \C^{n\times n} \\
  \left.{\scriptstyle Dp_1(M,\z)}\rule{0mm}{5mm}
  \right\downarrow &
  & \left\downarrow {\scriptstyle \pr\times \id}
      \rule{0mm}{5mm}\right.\\
  \cM & \stackrel{\simeq}{\longrightarrow} &
  \C^n\times \C^{n\times n} ,
\end{array}
$$
where $\pr(\dot{u},\dot{\z}) = \dot{u}$.
Using the singular value decomposition we may assume that
$A=\diag(\s_1,\ldots,\s_n)$.
Then the pseudoinverse of the projection $\pr$ is given by
the $\R$-linear map
$$
 \varphi\colon\C^n\to \graph(-A),\,
 \dot{u} \mapsto (\dot{u}, -\s_1^{-1}\dot{u}_{1},\ldots,-\s_n^{-1}\dot{u}_{n}) .
$$
It is easy to see that
$\det\varphi =\prod_{i=1}^n (1 + \s_i^{-2})$.
To complete the proof we note that
$1/\NJ p_1 (M,\z)=\det\varphi$.
\end{proof}

We have already seen that the condition number~$\mun(q,\z)$
can be described in terms of the map~$\Psi$ introduced in~\eqref{eq:Psi}.
As a stepping stone towards the analysis of the normal Jacobian of~$\Psi$
we introduce now the related bundle map
$$
\Phi\colon \hV\to \hW,\ (q,\z)\mapsto (Dq(\z),\z),
$$
whose normal Jacobian turns out to be constant.
(This crucial observation is due to
Beltr\'an and Pardo in~\cite{BePa08b}.)

\begin{proposition}\label{pro:NJs}
We have $\NJ\Phi(q,\z) = \Dn^n$ for all $(q,\z)\in \hV$.
\end{proposition}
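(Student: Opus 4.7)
The plan is to use unitary equivariance of $\Phi$ to reduce to the point $\z=e_0=(1,0,\ldots,0)$, then compute $\NJ \Phi(q,e_0)$ directly by expressing $D\Phi(q,e_0)$ in bases adapted to the orthogonal decomposition $\Hd = C_{e_0}\oplus L_{e_0}\oplus R_{e_0}$ of Lemma~\ref{le:odecomp}. Equivariance is clear because $\cU(n+1)$ acts isometrically on $\hV$ and $\hW$ and intertwines the maps, and it acts transitively on $\bS^n$, so $\NJ\Phi$ is constant on $\hV$.

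First I will identify $\ker D\Phi(q,e_0)$. A tangent vector $(\dot q,\dot\z)\in T_{(q,e_0)}\hV$ satisfies $\dot q(e_0)+Dq(e_0)\dot\z=0$, and the description of $D\Phi$ given after Lemma~\ref{le:psi} shows it lies in $\ker D\Phi$ iff $\dot\z=0$ and $D\dot q(e_0)=0$. Combined with $\dot q(e_0)=0$, this says precisely $\dot q\in R_{e_0}$, giving $\ker D\Phi = R_{e_0}\times\{0\}$. The orthogonal complement $N\subset T_{(q,e_0)}\hV$ is then parametrized by pairs $(\dot g,\dot\z)\in L_{e_0}\times T_{e_0}\bS^n$: the tangent constraint forces a unique $\dot k\in C_{e_0}$, namely $\dot k_i=c_i\langle X,e_0\rangle^{d_i}$ with $c_i=-(Dq(e_0)\dot\z)_i$, and the tangent vector is $(\dot k+\dot g,\dot\z)$. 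Similarly I parametrize $T_{(M,e_0)}\hW$ (with $M=Dq(e_0)$) by $(\dot M',\dot\z)\in W_{e_0}\times T_{e_0}\bS^n$, where the first column of $\dot M$ is forced to be $-M\dot\z$.

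Next I compute the derivative in these coordinates. Since $\dot k$ contributes to $D\dot q(e_0)$ only in column~$0$ (which is forgotten when passing to $\dot M'$), since $D\dot g(e_0)=\Delta\tilde M$ in columns~$1,\ldots,n$ (where $\tilde M\in W_{e_0}$ is the isometric image of $\dot g$ via Lemma~\ref{le:odecomp} and $\Delta=\diag(\sqrt{d_i})$), and since $D^2q(e_0)\dot\z$ yields some $\R$-linear $B\colon T_{e_0}\bS^n\to W_{e_0}$, the map $D\Phi|_N$ takes the block-upper-triangular form
\[
 \begin{pmatrix} \Delta & B \\ 0 & I \end{pmatrix}\colon L_{e_0}\oplus T_{e_0}\bS^n \longrightarrow W_{e_0}\oplus T_{e_0}\bS^n.
\]

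The main subtlety, and what I expect to require the most care, is that the induced metrics on these parametrized source and target spaces are \emph{not} product metrics: the source has norm-squared $\|\dot g\|^2+\|\dot\z\|^2+\|\dot k\|^2=\|\dot g\|^2+\|\dot\z\|^2+\|M\dot\z\|^2$, while the target has norm-squared $\|\dot M'\|^2+\|\dot\z\|^2+\|M\dot\z\|^2$. I will observe that under the Bombieri--Weyl isometry $L_{e_0}\simeq W_{e_0}$ these two metrics are \emph{structurally identical}, so the volume distortion $\NJ\Phi(q,e_0)$ equals the absolute real determinant of the block-triangular map above, which in turn equals $|\det_\R\Delta|$. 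Since $\Delta$ is $\C$-linear acting on $W_{e_0}\simeq\C^{n\times n}$ by scaling row~$i$ by $\sqrt{d_i}$, we have $\det_\C\Delta=\prod_{i=1}^n (\sqrt{d_i})^n=\Dn^{n/2}$, hence $|\det_\R\Delta|=|\det_\C\Delta|^2=\Dn^n$, yielding the claim.
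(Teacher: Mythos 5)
Your proposal is correct and follows essentially the same route as the paper's proof: reduce to $\z=(1,0,\ldots,0)$ by unitary invariance, identify $\ker D\Phi$ with $R_\z$, exhibit the block-triangular structure of $D\Phi$ on the orthogonal complement, and read off the normal Jacobian as the real determinant of the row-scaling by $\sqrt{d_i}$ on an $n\times n$ complex matrix space, namely $\Dn^n$. The only difference is bookkeeping: the paper inserts the $\sqrt{d_i}$ factors into the coordinates of $\dot q$ so that both tangent spaces carry the standard product metric (and verifies the column-$0$ cancellation explicitly), whereas you keep the $C_\z\oplus L_\z\oplus R_\z$ decomposition and instead argue, correctly, that the matching non-product Gram matrices on source and target cancel in the Jacobian.
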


\begin{proof}

By unitary invariance we may assume without loss of generality that $\z=(1,0,\ldots,0)$.
If we write $N= (n_{ij}) = Dq(\z)\in\cM$ we must have $n_{i0}=0$ since $N\z=0$.
Moreover, according to the orthogonal decomposition~\eqref{eq:odecomp}
and Lemma~\ref{le:odecomp}, we have for $1\le i\le n$
$$
 q_i = X_0^{d_i-1}\sum_{j=1}^n n_{ij}X_j + h_i
$$
for some $h=(h_1,\ldots,h_n)\in R_\z$.
We further express $\dot{q}_i\in T_q\Hd=\Hd$ as
$$
 \dot{q}_i = \dot{u}_i X_0^{d_i}
  + \sqrt{d_i} X_0^{d_i-1}\sum_{j=1}^n\dot{a}_{ij}X_j + \dot{h}_i
$$
in terms of the coordinates $\dot{u} =(\dot{u}_i)\in\C^n$,
$\dot{A}=(\dot{a}_{ij})\in\C^{n\times n}$, and $\dot{h}=(\dot{h}_i)\in R_\z$.
The reason to put the factor $\sqrt{d_i}$ here is that
\begin{equation}\label{eq:BW}
\|\dot{q}\|^2= \sum_{i}|\dot{u}_{i}|^2 + \sum_{ij}|\dot{a}_{ij}|^2
  + \sum_i \|\dot{h}_i\|^2
\end{equation}
by the definition of the Bombieri-Weyl inner product.

The tangent space $T_{(q,\z)}V$ consists of the
$(\dot{q},\dot{\z})\in\Hd\times T_\z\bS^n$
such that $\dot{q}(\z) + N\dot{\z} = 0$,
see~\cite[\S10.3, Prop.~1]{bcss:95}.
This condition can be expressed in coordinates as
\begin{equation}\label{eq:tanc}
  \dot{u}_i + \sum_{j=1}^n n_{ij} \dot{\z}_j = 0,
  \quad \mbox{$i=1,\ldots,n$.}
\end{equation}
By~\eqref{eq:BW} the inner product on $T_{(q,\z)}V$ is given
by the standard inner product
in the chosen coordinates $\dot{u}_i,\dot{a}_{ij},\dot{\z}_j$
if $\dot{h}_i=0$.
Thinking of the description of $T_{(N,\z)}W$ given in the proof
of Lemma~\ref{le:NJp1},
we may therefore isometrically identify $T_{(q,\z)}V$ with the product
$T_{(N,\z)}W \times R_\z$ via
$(\dot{q},\dot{\z})\mapsto ((\dot{u},\dot{A},\dot{\z}),\dot{h})$.
The derivative of $\pi_1$ is then described by the commutative diagram
\begin{equation}\label{eq:idp}
\begin{array}{ccc}
 T_{(q,\z)}V & \stackrel{\simeq}{\longrightarrow}
   & T_{(N,\z)} W \times R_\z \\
  \left.{\scriptstyle D\pi_1(q,\z)}
  \rule{0mm}{5mm}\right\downarrow &
  & \left\downarrow { \scriptstyle Dp_1(N,\z) \times \id}
   \rule{0mm}{5mm}\right.\\
  \Hd & \stackrel{\simeq}{\longrightarrow} & \cM \times R_\z .
\end{array}
\end{equation}

We shall next calculate the derivative of $\Phi$.
For this, we will use the shorthand $\partial_kq$ for
the partial derivative $\partial_{X_k}q$, etc.
A short calculation yields, for $j>0$,
\begin{equation}\label{eq:derform}
 \partial_0\dot{q}_i(\z) = d_i \dot{u}_i,\quad
 \partial_j\dot{q}_i(\z) = \sqrt{d_i}\,\dot{a}_{ij},\quad
 \partial_{0j}^2q_i(\z) = (d_i-1)\, n_{ij} .
\end{equation}
Similarly, we obtain
$\partial_0 q_i(\z) = 0$ and $\partial_j q_i(\z) = n_{ij}$
for $j>0$.

The derivative of $D\Phi(q,\z)\colon T_{(q,\z)}V \to T_{(N,\z)}W$
is determined by
$$
D\Phi(q,\z)(\dot{q},\dot{\z}) = (\dot{N},\dot{\z}),
\mbox{ where }
 \dot{N} = D\dot{q}(\z) + D^2q(\z)(\dot{\z},\cdot) .
$$
Introducing the coordinates $\dot{N}=(\dot{n}_{ij})$ this can be written as
\begin{equation}\label{eq:starstar}
 \dot{n}_{ij} = \partial_j \dot{q}_i(\z) + \sum_{k=1}^n\partial^2_{jk}\dot{q}_i(\z)\,\dot{\z}_k .
\end{equation}
For $j>0$ this gives, using~\eqref{eq:derform},
\begin{equation}\label{eq:Der2}
  \dot{n}_{ij} = \sqrt{d_i}\,\dot{a}_{ij} + \sum_{k=1}^n\partial^2_{jk}\dot{q}_i(\z)\,\dot{\z}_k .
\end{equation}
For $j=0$ we obtain from~\eqref{eq:starstar}, using~\eqref{eq:derform}
and~\eqref{eq:tanc},
\begin{equation}\label{eq:Der1}
 \dot{n}_{i0} = \partial_0\dot{q}_i(\z) + \sum_{k=1}^n\partial^2_{0k}\dot{q}_i(\z)\,\dot{\z}_k
  = d_i\dot{u}_i + (d_i-1)\sum_{k=1}^n n_{ik}\,\dot{\z}_k
= \dot{u}_i .
\end{equation}
Note the crucial cancellation taking place here!

From~\eqref{eq:Der2} and \eqref{eq:Der1}
we see that the kernel~$K$ of $D\Phi(q,\z)$ is determined
by the conditions $\dot{\z}=0,\dot{u}=0,\dot{A}=0$.
Hence, recalling $T_{(q,\z)}V \simeq T_{(N,\z)}W \times R_\z$,
we have $K\simeq 0 \times R_\z$ and
$K^\perp\simeq T_{(N,\z)} W \times 0$.
Moreover,
as in the proof of Lemma~\ref{le:NJp1} (but replacing $M$ by $N$)
we write
$$
 E:= \bigg\{ (\dot{u},\dot{\z}) \in\C^n\times \C^{n+1} \mid
 \dot{u}_i + \sum_{j=1}^n n_{ij} \dot{\z}_j = 0, 1\le i\le n,\,
     \dot{\z}_0 \in i \R \bigg\}
$$
and identify $T_{(N,\z)} W$ with $E\times \C^{n\times n}$.
Using this identification of spaces,
\eqref{eq:Der2} and \eqref{eq:Der1} imply that
$D\Phi(q,\z)_{K^\perp}$ has the following structure:
\begin{eqnarray*}
 D\Phi(q,\z)_{K^\perp}\colon E\times \C^{n\times n}
&\to& E\times \C^{n\times n},\\
   ((\dot{u},\dot{\z}),\dot{A}) &\mapsto&
   ((\dot{u},\dot{\z}), \lambda(\dot{A}) + \rho(\dot{\z})) ,
\end{eqnarray*}
where the linear map
$\lambda\colon\C^{n\times n}\to \C^{n\times n},
\dot{A} \mapsto (\sqrt{d_i}\,\dot{a}_{ij})$,
multiplies the $i$th row of $\dot{A}$ with $\sqrt{d_i}$ and
$\rho\colon\C^{n+1}\to \C^{n\times n}$ is given by
$\rho(\dot{\z})_{ij} = \sum_{k=1}^n\partial^2_{jk}\dot{q}_i(\z)
\,\dot{\z}_k$.

By definition we have
$\NJ\Phi(q,\z) = | \det D\Phi(q,\z)_{|K^\perp}|$.
The triangular form of $D\Phi(q,\z)_{K^\perp}$ shown above implies that
$| \det D\Phi(q,\z)_{|K^\perp}| = \det\lambda$.
Finally, using the diagonal form of $\lambda$,
we obtain $\det\lambda =\prod_{i=1}^n \sqrt{d_i}^2 = \Dn^n$,
which completes the proof.
\end{proof}

\begin{remark}
An inspection of the proof of Proposition~\ref{pro:NJs} reveals that
the second order derivatives occuring
in $D\Phi$ do not have any impact on the normal Jacobian $\NJ\Phi$.
Its value~$\Dn^n$ occurs as a result of the chosen Bombieri-Weyl inner product on $\Hd$.
With respect to the naive inner product on $\Hd$
(where the monomials form an orthonormal basis),
the normal Jacobian of $\Phi$ at $(q,\z)$ would be equal to one
at $\z=(1,0,\ldots,0)$.
However unitary invariance would not hold and the
normal Jacobian would take different values elsewhere.
\end{remark}

Before proceding we note the following consequence
of Equation~\eqref{eq:idp}:
\begin{equation}\label{eq:Npip}
\NJ \pi_1(q,\z) = \NJ p_1 (N,\z) \quad\mbox{where $N=Dq(\z)$}.
\end{equation}

The normal Jacobian of the map $\Psi\colon V\to W$ is not constant
and takes a more complicated form in terms of the normal
Jacobians of the projection $p_1\colon W\to\cM$.
For obtaining an expression for $\NJ\Psi$
we need the following lemma.

\begin{lemma}\label{le:NJsc}
The scaling map
$\gamma\colon W\to W,(N,\z)\mapsto (M,\z)$ with $M=\Delta^{-1}N$ of rank~$n$
satisfies
$$
\det D\gamma(N,\z) = \frac1{\Dn^{n+1}}\cdot\frac{\NJ \ps_1 (N,\z)}{\NJ \ps_1 (M,\z) }.
$$
\end{lemma}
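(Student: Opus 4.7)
The plan is to exploit the commutativity $p_1\circ\gamma=\Delta^{-1}\circ p_1$, where $\Delta^{-1}\colon\cM\to\cM$ is the $\R$-linear endomorphism that scales the $i$th row of a matrix by $1/\sqrt{d_i}$. On the rank-$n$ locus, $\gamma$ is a diffeomorphism, so $|\det D\gamma(N,\z)|$ is well-defined, and this is the quantity I aim to compute; the sign in the lemma is only notational, since the asserted right-hand side is manifestly positive.

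First I would decompose the tangent spaces using the vertical subbundle $V_{(N,\z)}:=\ker Dp_1(N,\z)$. From the description of $T_{(N,\z)}W$ as pairs $(\dot N,\dot\z)$ satisfying $\dot N\z+N\dot\z=0$, combined with $Dp_1(N,\z)(\dot N,\dot\z)=\dot N$, the vertical space consists of vectors of the form $(0,\dot\z)$ with $\dot\z\in T_\z\bS^n\cap\ker N$; when $N$ has rank $n$ this is the real one-dimensional line $i\R\cdot\z$. Since $\gamma$ fixes the base point $\z$ and acts as $\dot N\mapsto\Delta^{-1}\dot N$ on the first component, $D\gamma$ sends $V_{(N,\z)}$ onto $V_{(M,\z)}$ by the identity. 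Consequently, in the orthogonal decompositions $T_{(N,\z)}W=H_{(N,\z)}\oplus V_{(N,\z)}$ and $T_{(M,\z)}W=H_{(M,\z)}\oplus V_{(M,\z)}$ (with $H$ the orthogonal complement of $V$), the derivative $D\gamma$ has block form
\[
D\gamma=\begin{pmatrix} A & 0 \\ B & \Id \end{pmatrix},
\]
so that $|\det D\gamma(N,\z)|=|\det A|$, where $A\colon H_{(N,\z)}\to H_{(M,\z)}$ is the horizontal block.

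Next I would recover $|\det A|$ from the commutative diagram. Applying $Dp_1(M,\z)$ to $D\gamma|_{H_{(N,\z)}}$ and using that $Dp_1(M,\z)$ annihilates $V_{(M,\z)}$ yields
\[
Dp_1(M,\z)|_{H_{(M,\z)}}\circ A\;=\;\Delta^{-1}\circ Dp_1(N,\z)|_{H_{(N,\z)}}.
\]
By definition of the normal Jacobian, the restrictions $Dp_1(\cdot,\z)|_{H_{(\cdot,\z)}}$ are linear isomorphisms whose absolute determinants equal $\NJ p_1(\cdot,\z)$. Taking absolute determinants and solving for $|\det A|$ gives
\[
|\det A|\;=\;|\det\Delta^{-1}|\cdot\frac{\NJ p_1(N,\z)}{\NJ p_1(M,\z)}.
\]
Viewing $\Delta^{-1}$ as an $\R$-linear endomorphism of $\cM\simeq\R^{2n(n+1)}$, the scaling of the $i$th row (comprising $2(n+1)$ real coordinates) by $1/\sqrt{d_i}$ contributes a factor $d_i^{-(n+1)}$, whence $|\det\Delta^{-1}|=\Dn^{-(n+1)}$, completing the proof.

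The main technical obstacle is establishing the block-triangular form of $D\gamma$: it hinges on identifying the vertical subspace precisely (using the defining equation of $T_{(N,\z)}W$ and the formula $Dp_1=\pi_{\dot N}$) and checking that $D\gamma$ acts as the identity there because $\gamma$ fixes $\z$ and the first component of a vertical vector vanishes. Once this is in place, the determinant computation via the commutative diagram and the final evaluation of $|\det\Delta^{-1}|$ are straightforward.
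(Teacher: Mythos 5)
Your proof is correct and follows essentially the same route as the paper: the paper realizes your horizontal subspace $H_{(\cdot,\z)}$ as the tangent space to the projective solution variety $W_\proj$ (on which the projection has derivative a linear isomorphism whose determinant is $\NJ p_1$), splits off the vertical line $\R i\z$ on which $D\gamma$ acts as the identity, and then reads off the determinant from the same commutative diagram together with $|\det_\R \Delta^{-1}|=\Dn^{-(n+1)}$. The only difference is presentational, namely whether one describes the complement of $\ker Dp_1$ intrinsically (as you do) or via $W_\proj$.
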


\begin{proof}
If $W_\proj$ denotes the solution variety in
$\cM\times\proj^n$ analogous to~$W$
we have
$T_{(M,\z)}W = T_{(M,\z)}W_\proj \oplus \R i\z$.
Let $p_1'\colon W_\proj\to\cM$ denote the projection.
The derivative $D\gamma_\proj(N,\z)$
of the corresponding scaling map
$\gamma_\proj\colon W_\proj\to W_\proj$ is determined by
the commutative diagram
$$
\begin{array}{rcl}
 T_{(N,\z)}W_\proj & \stackrel{D\gamma_\proj(N,\z)}
   {\longrightarrow} & T_{(M,\z)}W_\proj \\
  \left.{\scriptstyle Dp_1'(N,\z)}
  \rule{0mm}{5mm}\right\downarrow &
  & \left\downarrow {\scriptstyle Dp_1'(M,\z)}
  \rule{0mm}{5mm}\right.\\
  \cM & \stackrel{\mathrm{sc}}{\longrightarrow} & \cM
\end{array}
$$
where the vertical arrows are linear isomorphisms.
The assertion follows by observing that
$\NJ p_1(N,\z) = \det Dp_1'(N,\z)$,
$\NJ \gamma (N,\z) = \det D \gamma_\proj (N,\z)$,
and using that the $\R$-linear map
$
  \mathrm{sc}\colon\cM\to\cM, N\mapsto M=\Delta^{-1}N
$
has the determinant $1/\Dn^{n+1}$.
\end{proof}

Proposition~\ref{pro:NJs} combined with Lemma~\ref{le:NJsc}
immediately gives
\begin{equation}\label{eq:NJPsi}
 \NJ\Psi(q,\z) = \frac1{\Dn}\cdot \frac{\NJ \ps_1 (N,\z)}{\NJ \ps_1 (M,\z)}
\end{equation}
for $N=Dq(\z)$, $M=\Delta^{-1}N$.

\subsection{Induced probability distributions}\label{se:ind-dist}

By B\'ezout's theorem, the fiber $\hV(q)$ of the projection
$\pi_1\colon\hV\to \Hd$ at $q\in\Hd$ is a disjoint union of
$\Dn=d_1\cdots d_n$ unit circles and therefore has the volume $2\pi\Dn$,
provided $q$ does not lie in the discriminant variety.

Recall that $\rho_{\Hd}$ denotes the density of
the Gaussian distribution $N(\oq,\s^2\Id)$ for
fixed $\oq\in\Hd$ and $\s>0$ and
$\E_\Hd$ stands for expectation taken with respect to that density.
We associate with $\rho_{\Hd}$ the function $\rho_\hV:\hV\to\R$ defined by
\begin{equation}\label{eq:dens_V}
  \rho_\hV(q,\zeta):=\frac{1}{2\pi\Dn}\,\rho_{\Hd}(q)\,
  \NJ\pi_1(q,\zeta) .
\end{equation}
The next result shows that $\rho_V$ is the probability density
function of the distribution on $\hV$ we described in
\S\ref{se:induced}.

\begin{lemma}\label{le:rho1}
\begin{enumerate}
\item The function $\rho_\hV$ is a probability density on $\hV$.

\item The expectation of a function
$\varphi\colon\hV\to \R$ with respect to~$\rho_\hV$ can be expressed as
$\E_\hV(\varphi) = \E_{\Hd}(\varphi_{av})$, where
$$
 \varphi_{\av}(q) :=  \frac1{2\pi\Dn}\int_{\hV(q)} \varphi\, d\hV(q) .
$$

\item The pushforward of $\rho_\hV$ with respect to $\pi_1\colon V\to\Hd$
equals $\rho_\Hd$.

\item For $q\not\in\Sigma$, the conditional density on the fiber $\hV(q)$  is the
density of the uniform distribution on $\hV(q)$.

\item The probability density $\rhozero$ on $\Vp$ introduced in \S\ref{sec:R&C}
is obtained from the density $\rho_\hV$ in the case $\oq=0$,
$\sigma=1$ as the pushforward under the canonical map $\hV\to \Vp,
(f,\z)\mapsto (f,[\z])$. Explicitly, we have
$$
\rhozero(q,[\z]) = \frac1{\Dn} \frac1{(2\pi)^{N}}\, e^{-\frac12
\|q\|^2} \NJ\pi_1(q,\z) .
$$
\end{enumerate}
\end{lemma}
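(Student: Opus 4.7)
The unifying idea is to apply the coarea formula of Proposition~\ref{pro:coarea} to the projection $\pi_1\colon\hV\to\Hd$. The key geometric observation is that for every $q\in\Hd\setminus\Sigma$, B\'ezout's theorem produces $\Dn$ projective zeros of~$q$, and each of them lifts to a unit circle in $\bS^n\subset\C^{n+1}$; hence the fiber $\hV(q)=\pi_1^{-1}(q)$ is a disjoint union of $\Dn$ isometrically embedded unit circles and has Riemannian volume $2\pi\Dn$. Since $\pi_1^{-1}(\Sigma)$ has measure zero in~$\hV$, this fiber-volume identity may be used under the coarea sign throughout.

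Parts (1)--(4) are then bookkeeping. Inserting~\eqref{eq:dens_V} into the coarea formula with $\chi=\rho_\hV$ gives
\[
 \int_\hV \rho_\hV\, d\hV
 \;=\; \int_{\Hd}\frac{\rho_\Hd(q)}{2\pi\Dn}\bigg(\int_{\hV(q)} d\hV(q)\bigg) d\Hd
 \;=\; \int_\Hd \rho_\Hd(q)\, d\Hd \;=\; 1,
\]
which is~(1); the analogous computation with $\chi=\varphi\rho_\hV$ recognises the inner integral as $\rho_\Hd(q)\,\varphi_{\av}(q)$ and yields~(2). For~(3), the definition~\eqref{eq:pushf} of pushforward produces the same fiber integral, returning $\rho_\Hd$. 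Substituting~\eqref{eq:dens_V} into the conditional-density formula~\eqref{eq:condd} gives
\[
 \rho_{\hV(q)}(q,\z) \;=\; \frac{\rho_\hV(q,\z)}{\rho_\Hd(q)\,\NJ\pi_1(q,\z)} \;=\; \frac1{2\pi\Dn},
\]
the uniform density on a fiber of volume $2\pi\Dn$, which is~(4).

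For~(5) I would push $\rho_\hV$ once more, now through the circle-bundle projection $\pi\colon\hV\to\Vp$, $(q,\z)\mapsto(q,[\z])$, whose fiber over $(q,[\z])$ is the unit circle $\{(q,\lambda\z):\lambda\in S^1\}$ of length $2\pi$. The quotient $\bS^n\to\proj^n$ is a Riemannian submersion for the Fubini--Study metric, so the induced map $\hV\to\Vp$ satisfies $\NJ\pi\equiv 1$. Moreover $\rho_\hV(q,\lambda\z)$ does not depend on $\lambda\in S^1$: the factor $\rho_\Hd(q)$ is $\z$-free, while $\NJ\pi_1(q,\z)$ depends only on the singular values of $Dq(\z)$ by~\eqref{eq:Npip} and Lemma~\ref{le:NJp1}, which are unchanged under $\z\mapsto\lambda\z$ with $|\lambda|=1$. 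Hence the fiber integral contributes a factor~$2\pi$, and specialising $\oq=0$, $\s=1$ so that $\rho_\Hd(q)=(2\pi)^{-N}e^{-\|q\|^2/2}$ produces the claimed closed form for~$\rhozero$. The only genuine checks along the way are the $S^1$-invariance of $\NJ\pi_1$ and the equality $\NJ\pi\equiv 1$; both are settled by unitary invariance together with a local calculation at a standard base point $\z=(1,0,\ldots,0)$.
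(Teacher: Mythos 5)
Your argument for parts (1)--(4) is exactly the paper's: apply the coarea formula to $\pi_1\colon\hV\to\Hd$, use the fiber volume $2\pi\Dn$, and read off the pushforward and conditional densities from \eqref{eq:pushf} and \eqref{eq:condd}. For part (5) the paper simply declares the assertion trivial, whereas you supply the underlying computation (the $S^1$-invariance of $\NJ\pi_1$, the unit normal Jacobian of the Riemannian submersion $\hV\to\Vp$, and the factor $2\pi$ from the circle fiber cancelling against $\frac1{2\pi\Dn}$), which is correct and yields the stated formula.
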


\begin{proof}
The coarea formula (Proposition~\ref{pro:coarea}) applied to
$\pi_1\colon\hV\to\Hd$ implies
\begin{eqnarray*}
  \int_\hV\varphi\, \rho_\hV d\hV
  &=& \int_{q\in\Hd}\Big(\int_{\z\in \hV(q)}
  \varphi(q,\z) \frac{\rho_\hV(q,\z)}{\NJ\pi_1(q,\zeta)}\,d\hV(q) \Big) \,d\Hd \\
&=&\int_{q\in\Hd} \varphi_{\av}(q)\, \rho_{\Hd}(q)\, d\Hd.
\end{eqnarray*}
Taking $\varphi=1$ reveals that $\rho_\hV$ is a density, proving the
first assertion. The above formula also shows the second assertion.

By Equation~\eqref{eq:pushf} the pushforward density $\rho$ of $\rho_V$
with respect to $\pi_1$ satisfies
$$
   \rho(q) = \int_{\z\in\hV(q)} \frac{\rho_\hV(q,\z)}{\NJ\pi_1(q,\z)}\,
  d\hV(q) =\rho_\Hd(q),
$$
as $\int dV(q) = 2\pi\Dn$.
This shows the third assertion. By \eqref{eq:condd} the conditional
density satisfies
$$
    \rho_{\hV(q)}(q) =
  \frac{\rho_\hV(q,\z)}{\rho_\Hd(q)\,\NJ\pi_1(q,\z)}= \frac1{2\pi\Dn},
$$
which shows the fourth assertion. The fifth assertion is trivial.
\end{proof}

We can now determine the various probability distributions induced by~$\rho_V$.

\begin{proposition}\label{pro:rhoW-1}
We have
$$
 \frac{\rho_\hV}{\NJ\Psi}(g_{M,\z} + h,\z)
 = \rho_\hW(M,\z)\cdot \rho_{R_\z}(h),
$$
where the pushforward density $\rho_W$ of $\rho_\hV$ with respect to~$\Psi\colon V\to W$ satisfies
$$
 \rho_\hW(M,\z) = \frac1{2\pi}\, \rho_{C_\z}(0)\cdot \rho_{W_\z}(M)\cdot \NJ p_1(M,\z).
$$
\end{proposition}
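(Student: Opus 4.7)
The plan is to compute $\rho_\hV/\NJ\Psi$ directly at a point of $\hV$ of the form $(g_{M,\z}+h,\z)$ and then identify the resulting expression with the genuine pushforward density by means of the general formula \eqref{eq:pushf}. The key inputs are: the definition \eqref{eq:dens_V} of $\rho_\hV$, the identity $\NJ\pi_1(q,\z)=\NJ p_1(N,\z)$ (with $N=Dq(\z)$) stated in \eqref{eq:Npip}, and the formula \eqref{eq:NJPsi} giving $\NJ\Psi(q,\z)=\Dn^{-1}\,\NJ p_1(N,\z)/\NJ p_1(M,\z)$. Substituting these three ingredients into $\rho_\hV/\NJ\Psi$, the factor $\Dn$ and the factor $\NJ p_1(N,\z)$ cancel cleanly, leaving
\[
\frac{\rho_\hV(q,\z)}{\NJ\Psi(q,\z)}=\frac{1}{2\pi}\,\rho_\Hd(q)\,\NJ p_1(M,\z).
\]
This cancellation is the crux of the whole argument; it is exactly what Proposition~\ref{pro:NJs} (the Beltr\'an--Pardo observation that $\NJ\Phi$ is constant) makes possible, and everything else is bookkeeping.

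Next I would feed in the orthogonal decomposition~\eqref{eq:odecomp}. Since $(q,\z)\in\hV$ satisfies $q(\z)=0$ and, by Lemma~\ref{le:odecomp}, evaluation at $\z$ (with $\|\z\|=1$) identifies $C_\z$ faithfully with $\C^n$, the $C_\z$-component of $q$ must vanish; hence $q=g_{M,\z}+h$ with $g_{M,\z}\in L_\z$ and $h\in R_\z$. Combining the product factorization \eqref{eq:dfactor} with the fact that the map $W_\z\to L_\z,\ M\mapsto g_{M,\z}$ of \eqref{eq:isomaps} is a Hermitian isometry (so Gaussian densities transport, yielding $\rho_{L_\z}(g_{M,\z})=\rho_{W_\z}(M)$) gives
\[
\rho_\Hd(g_{M,\z}+h)=\rho_{C_\z}(0)\,\rho_{W_\z}(M)\,\rho_{R_\z}(h).
\]
Substituting this into the previous display and setting $\rho_\hW(M,\z):=\tfrac{1}{2\pi}\,\rho_{C_\z}(0)\,\rho_{W_\z}(M)\,\NJ p_1(M,\z)$, one obtains the product formula asserted in the proposition.

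It remains to check that this $\rho_\hW$ is actually the pushforward of $\rho_\hV$ under $\Psi\colon\hV\to\hW$. By the general pushforward formula \eqref{eq:pushf}, the pushforward density at $(M,\z)$ is the integral of $\rho_\hV/\NJ\Psi$ along the fiber $\Psi^{-1}(M,\z)$. From the definition \eqref{eq:Psi} of~$\Psi$, this fiber is parameterized isometrically by $R_\z$ through $h\mapsto(g_{M,\z}+h,\z)$; integrating the product expression just derived therefore produces $\rho_\hW(M,\z)\cdot\int_{R_\z}\rho_{R_\z}(h)\,dh=\rho_\hW(M,\z)$, since $\rho_{R_\z}$ is a probability density. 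The only substantive obstacle in the whole proof is the cancellation in the first display; once Proposition~\ref{pro:NJs} delivers it, the two claims of the proposition drop out by linear-algebraic bookkeeping and a single use of the coarea formula.
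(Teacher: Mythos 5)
Your proposal is correct and follows essentially the same route as the paper's proof: both combine the definition \eqref{eq:dens_V} of $\rho_\hV$ with \eqref{eq:Npip} and \eqref{eq:NJPsi} so that $\Dn$ and $\NJ p_1(N,\z)$ cancel, then factor $\rho_\Hd$ via \eqref{eq:dfactor} and the isometry \eqref{eq:isomaps}, and finally integrate over the fiber $\Psi^{-1}(M,\z)\simeq R_\z$ using \eqref{eq:pushf}. The only difference is the order of the bookkeeping (you divide by $\NJ\Psi$ before factoring the Gaussian, the paper does the reverse), which is immaterial.
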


\begin{proof}
Using the factorization of Gaussians~\eqref{eq:dfactor} and Equation~\eqref{eq:Npip},
the density~$\rho_\hV$ can be written as
\begin{equation*}\label{eq:frhoV}
 \rho_\hV(g_{M,\z} + h,\z) = \frac1{2\pi\Dn}\,
  \rho_{C_\z}(0)\, \rho_{W_\z}(M)\, \rho_{R_\z}(h)\,
  \NJ p_1(N,\zeta),
\end{equation*}
where $N=\Delta M$.
It follows from~\eqref{eq:NJPsi} that
\begin{equation}\label{eq:istep}
 \frac{\rho_\hV}{\NJ\Psi}(g_{M,\z} + h,\z)
  = \frac1{2\pi}\, \rho_{C_\z}(0)\, \rho_{W_\z}(M)\, \rho_{R_\z}(h)\,
  \NJ p_1(M,\zeta).
\end{equation}
This implies, using~\eqref{eq:pushf} for
$\Psi:V\to W$ and the isometry $\Psi^{-1}(M,\z)\simeq R_\z$
for the fiber at $\z$, that
\begin{eqnarray*}
 \rho_W(M,\z) &=& \int_{h\in R_\z}
 \frac{\rho_\hV}{\NJ\Psi}(g_{M,\z} + h,\z)\, dR_\z\\
 &=& \frac1{2\pi}\, \rho_{C_\z}(0)\cdot \rho_{W_\z}(M)
    \cdot \NJ p_1(M,\z) \int_{h\in R_\z}\rho_{R_\z}(h)dR_\z \\
 &=& \frac1{2\pi}\, \rho_{C_\z}(0)\cdot \rho_{W_\z}(M)
    \cdot \NJ p_1(M,\z)
\end{eqnarray*}
as claimed. Replacing in~\eqref{eq:istep} we therefore obtain
\begin{equation*}
 \frac{\rho_\hV}{\NJ\Psi}(g_{M,\z} + h,\z)
 = \rho_\hW(M,\z)\ \rho_{R_\z}(h). \qedhere
\end{equation*}
\end{proof}

The claimed formula~\eqref{eq:heuri} for the pushforward
density~$\rho_\cM$
of $\rho_W$ with respect to $p_1\colon W\to\cM$ immediately follows
from Proposition~\ref{pro:rhoW-1} by integrating
$\frac{\rho_W}{\NJ p_1}$ over the fibers of $p_1$.

\begin{lemma}\label{pro:rhoW-2}
Let $c_\z$ denote the expectation of $\det (MM^*)$ with respect to~$\rho_{W_\z}$.
We have
$$
 \frac{\rho_W}{\NJ p_2}(M,\z) = \rho_{\bS^n}(\z)\cdot \tilde{\rho}_{W_\z}(M),
$$
where $\rho_{\bS^n}(\z) = \frac{c_\z}{2\pi} \rho_{C_\z}(0)$ is
the pushforward density of $\rho_W$ with respect to
$p_2\colon W\to\bS^n$,
and where the conditional density $\tilde{\rho}_{W_\z}$ on the fiber $W_\z$ of~$\ps_2$
is given by
\begin{equation*}
   \tilde{\rho}_{W_\z}(M) = c_\z^{-1}\cdot \det (MM^*) \rho_{W_\z}(M) .
\end{equation*}
\end{lemma}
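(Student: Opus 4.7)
My plan is to reduce the lemma to Proposition \ref{pro:rhoW-1} by first establishing the normal-Jacobian identity
$$
  \NJ \ps_1(M,\z) \;=\; \det(MM^*)\cdot \NJ \ps_2(M,\z),
$$
and then applying the general coarea formulas \eqref{eq:pushf} and \eqref{eq:condd} to read off the pushforward $\rho_{\bS^n}$ and the conditional density $\tilde\rho_{W_\z}$.

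To prove the identity I would compute $\NJ \ps_2(M,\z)$ following the template of Lemma \ref{le:NJp1}. Using unitary invariance I reduce to $\z=(1,0,\ldots,0)$ and identify $T_{(M,\z)}\hW$ with $E\times\C^{n\times n}$ exactly as in that proof, writing $A$ for the lower $n\times n$ block of $M$. Under this identification $D\ps_2$ corresponds to the projection onto the $\dot\z$-component, its kernel $W_\z$ sits as $\{((0,0),\dot A)\}$, and the pseudoinverse on the orthogonal complement $E\times 0$ is the map $\psi\colon T_\z\bS^n\to E$, $\dot\z\mapsto(-A\dot\z',\dot\z)$ with $\dot\z':=(\dot\z_1,\ldots,\dot\z_n)$. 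Passing to the SVD basis of $A$, the map $\psi$ is diagonal with respect to the orthonormal real basis of $T_\z\bS^n$ formed by $i\in i\R$ together with the real and imaginary parts of $\dot\z_j$ for $j\ge 1$, with singular values $1$ and $\sqrt{1+\s_j^2}$ (each of the latter occurring twice). This yields $|\det\psi|=\prod_j(1+\s_j^2)$, whence $\NJ \ps_2(M,\z)=\prod_j(1+\s_j^2)^{-1}$. Comparing with $\NJ \ps_1(M,\z)=\prod_j(1+\s_j^{-2})^{-1}$ from Lemma \ref{le:NJp1}, the factors telescope to $\prod_j\s_j^2=\det(MM^*)$, which is the identity above.

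With the identity in hand, substitution into Proposition \ref{pro:rhoW-1} gives
$$
  \frac{\rho_\hW}{\NJ \ps_2}(M,\z) \;=\; \frac{1}{2\pi}\,\rho_{C_\z}(0)\,\det(MM^*)\,\rho_{W_\z}(M).
$$
Integrating this over the fiber $\ps_2^{-1}(\z)=W_\z$ and invoking the definition $c_\z=\E_{\rho_{W_\z}}\det(MM^*)$, formula \eqref{eq:pushf} identifies the pushforward as $\rho_{\bS^n}(\z)=\frac{c_\z}{2\pi}\rho_{C_\z}(0)$; dividing by $\rho_{\bS^n}(\z)\,\NJ \ps_2(M,\z)$ and applying \eqref{eq:condd} then yields the stated formula for $\tilde\rho_{W_\z}$. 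The only genuinely delicate step in the whole argument is the computation of $\NJ \ps_2$: it requires tracking the metric on $T_{(M,\z)}\hW$ induced from $\cM\times T_\z\bS^n$ and correctly identifying the orthogonal complement of $\ker D\ps_2$ inside it. Once that bookkeeping is done, the singular values of $\psi$ read off directly from the SVD of $A$ and everything else is formal manipulation of coarea formulas.
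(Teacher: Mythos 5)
Your proposal is correct and follows essentially the same route as the paper: the formal part (substituting the Jacobian identity into Proposition~\ref{pro:rhoW-1}, integrating over the fiber to get $\rho_{\bS^n}$, and reading off $\tilde\rho_{W_\z}$ via \eqref{eq:condd}) is exactly what the paper does. The only difference is that the paper simply cites the identity $\NJ p_1/\NJ p_2 = \det(MM^*)$ from Shub--Smale (B\'ezout II, see also \cite[Section~13.2]{bcss:95}), whereas you prove it by computing $\NJ p_2(M,\z)=\prod_j(1+\s_j^2)^{-1}$ in the style of Lemma~\ref{le:NJp1}; that computation is correct and makes the argument self-contained.
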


\begin{proof}
In~\cite{Bez2} (see also \cite[Section~13.2, Lemmas~2-3]{bcss:95})
it is shown that
\begin{equation}\label{eq:NJq}
 \frac{\NJ p_1}{\NJ p_2}(M,\z) = \det (M M^*).
\end{equation}
Combining this with Proposition~\ref{pro:rhoW-1} we get
$$
 \frac{\rho_W}{\NJ p_2}(M,\z)
  = \frac1{2\pi}\, \rho_{C_\z}(0)\cdot \rho_{W_\z}(M)\cdot \det(MM^*).
$$
Integrating over $W_\z$ we get
$\rho_{\bS^n}(\z) = \frac1{2\pi}\, \rho_{C_\z}(0)\cdot c_\z$,
and finally (cf.~\eqref{eq:condd})
$$
 \tilde{\rho}_{W_\z}(M) = \frac{\rho_W(M,\z)}{\rho_{\bS^n}(\z)\,\NJ p_2(M,\z)}
  = c_\z^{-1}\cdot \rho_{W_\z}(M)\cdot \det(MM^*)
$$
as claimed.
\end{proof}

This lemma shows that the conditional density $\tilde{\rho}_{W_\z}$
has the form stated in~\eqref{eq:form} and therefore completes the proof of
Theorem~\ref{th:mu2-bound}.

\subsection{Expected number of real zeros}

As a further illustration of the interplay of Gaussians with
the coarea formula in our setting, we give
a simplified proof of one of the main results of~\cite{Bez2}.
This subsection is not needed for understanding the remainder of the paper.

Our developments so far took place over the complex numbers~$\C$,
but much of what has been said carries over the situation over~$\R$.
However, we note that algorithm \ALH\ would not work over~$\R$
since the lifting of the segment $E_{f,g}$ will likely
contain a multiple zero
(over $\C$ this happens with probability zero since the
real codimension of the discriminant variety equals two).

Let $\HdR$ denote the space of real polynomial systems in $\Hd$
endowed with the Bombieri-Weyl inner product.
The standard Gaussian distribution on $\HdR$ is well-defined
and we denote its density with $\rho_{\HdR}$.

\begin{corollary}\label{cor:average-real}
The average number of zeros of a standard Gaussian random $f\in\HdR$
in the real projective space~$\proj^n(\R)$
equals $\sqrt{\Dn}$.
\end{corollary}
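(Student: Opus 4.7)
The plan is to imitate the framework of Section~\ref{se:ind-dist} in the real setting and apply the coarea formula twice. I would consider the real solution variety $V^\R := \{(f,[\z])\in\HdR\times\proj^n(\R) \mid f(\z)=0\}$ with the two projections $\pi_1^\R$ and $\pi_2^\R$; the expected number of real zeros is $\E_{f\sim\rho_\HdR}\bigl(\#(\pi_1^\R)^{-1}(f)\bigr)$. Applying coarea to $\pi_1^\R$ rewrites this as $\int_{V^\R}\rho_\HdR(f)\,\NJ\pi_1^\R(f,\z)\,dV^\R$, and a second coarea applied to the surjection $\pi_2^\R$ converts it into
$$
 \int_{\proj^n(\R)}\int_{V^\R(\z)}\rho_\HdR(f)\,\frac{\NJ\pi_1^\R(f,\z)}{\NJ\pi_2^\R(f,\z)}\,dV^\R(\z)\,d\proj^n(\R),
$$
where $V^\R(\z)=\{f\in\HdR:f(\z)=0\}$.

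Next, I would exploit orthogonal invariance of the integrand under $O(n+1)$ to reduce the inner integral to a fixed $\z\in\bS^n(\R)$. The real analogues of Lemma~\ref{le:odecomp} and of~\eqref{eq:odecomp} give $\HdR=C_\z^\R\oplus L_\z^\R\oplus R_\z^\R$ and $V^\R(\z)=L_\z^\R\oplus R_\z^\R$. The standard Gaussian density factors on this orthogonal decomposition, so for $f=g+h\in V^\R(\z)$ one has $\rho_\HdR(f)=\rho_{C_\z^\R}(0)\,\rho_{L_\z^\R}(g)\,\rho_{R_\z^\R}(h)$; the $R_\z^\R$-integration is trivial and $\rho_{C_\z^\R}(0)=(2\pi)^{-n/2}$ since $\dim_\R C_\z^\R=n$.

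Real analogues of~\eqref{eq:Npip} and~\eqref{eq:NJq}---the latter becoming $\NJ p_1^\R/\NJ p_2^\R(M,\z)=|\det(M_{|T_\z})|$ by the same SVD argument as in the proof of Lemma~\ref{le:NJp1}---combined with Lemma~\ref{le:psi} give
$$
 \frac{\NJ\pi_1^\R}{\NJ\pi_2^\R}(f,\z)=\sqrt{\Dn}\,|\det A|,
$$
where $A$ is the $n\times n$ real matrix representing $M_{|T_\z}$ under the isometry $L_\z^\R\cong W_\z^\R$ of Lemma~\ref{le:odecomp}. This isometry pushes $\rho_{L_\z^\R}$ forward to the standard Gaussian on $\R^{n\times n}$, so the task reduces to computing $\E(|\det A|)$ for such a matrix. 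The Bartlett decomposition writes $|\det A|$ as a product $\chi_1\chi_2\cdots\chi_n$ of independent chi-distributed variables, and a telescoping of $\E(\chi_k)=\sqrt{2}\,\Gamma(\tfrac{k+1}{2})/\Gamma(\tfrac{k}{2})$ yields $\E(|\det A|)=2^{n/2}\Gamma(\tfrac{n+1}{2})/\sqrt{\pi}$. Multiplying with $\vol(\proj^n(\R))=\pi^{(n+1)/2}/\Gamma(\tfrac{n+1}{2})$ and the factor $(2\pi)^{-n/2}$ from the previous paragraph, every $\Gamma$ and power of $2\pi$ cancels, leaving exactly $\sqrt{\Dn}$. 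I expect the main technical obstacle to be careful verification of the real version of~\eqref{eq:NJq}, but this amounts to the same singular-value-decomposition calculation as in the complex case, with $|\det|$ in place of $|\det|^2$.
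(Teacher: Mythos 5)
Your argument is correct --- the final bookkeeping
$\sqrt{\Dn}\cdot(2\pi)^{-n/2}\cdot 2^{n/2}\Gamma(\frac{n+1}{2})\pi^{-1/2}\cdot\pi^{(n+1)/2}\Gamma(\frac{n+1}{2})^{-1}=\sqrt{\Dn}$
does check out --- but it takes a genuinely different route from the paper's. You follow the classical double-fibration (Kac--Rice/Shub--Smale) scheme: integrate over the base $\proj^n(\R)$ last, which forces you to evaluate the conditional expectation $\E|\det A|$ on each fiber (Bartlett decomposition) and the volume of $\proj^n(\R)$, and to watch the Gamma factors cancel. The paper instead pushes the computation through $\Phi_\R\colon V_\R\to W_\R$ and then down to the full matrix space $\cM_\R=\R^{n\times(n+1)}$, where the remaining integrand is exactly the standard Gaussian density and integrates to $1$ with no special-function input; the answer $\sqrt{\Dn}$ then appears as the ratio $\Dn^{(n+1)/2}/\Dn^{n/2}$ of the Jacobian of the scaling $N\mapsto\Delta^{-1}N$ on $\cM_\R$ to the constant normal Jacobian of $\Phi_\R$ (the real analogue of Proposition~\ref{pro:NJs}). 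What the paper's route buys is precisely the avoidance of the Wishart determinant moment and of $\vol\,\proj^n(\R)$; what yours buys is independence from the constancy of $\NJ\Phi_\R$, replaced by standard random-matrix facts. One gap in your chain of citations should be closed: \eqref{eq:Npip} and the real analogue of \eqref{eq:NJq} alone do not yield $\NJ\pi_1^\R/\NJ\pi_2^\R$; you also need the $\pi_2$-analogue of \eqref{eq:Npip}, namely $\NJ\pi_2(q,\z)=\NJ p_2(Dq(\z),\z)$, which is nowhere stated in the paper. It is true --- the orthogonal complement of $\ker D\pi_2$ in $T_{(q,\z)}V_\R$ consists of pairs $(\dot q,\dot\z)$ with $\dot q\in C_\z$ and $\dot q(\z)=-Dq(\z)\dot\z$, so it is governed by $Dq(\z)$ exactly as in the matrix case --- but it is what makes the ratio depend on $q$ only through its derivative at $\z$, so it deserves a proof.
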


\begin{proof}
Let $\chi(q)$ denote the number of real zeros in
$\proj^n(\R)$ of $q\in\HdR$.
Thus the number of real zeros
in the sphere~$S^n=S(\R^{n+1})$ equals $2\chi(q)$.
The real solution variety
$V_\R\subseteq\HdR\times S^n$
is defined in the obvious way and so is
$W_\R\subseteq \cM_\R\times S^n$,
where $\cM_\R=\R^{n\times (n+1)}$.

The same proof as for Proposition~\ref{pro:NJs}
shows that the normal Jacobian of the map
$\Phi_\R\colon V_\R\to W_\R, (q,\z)\mapsto (Dq(\z),\z)$
has the constant value $\Dn^{n/2}$
(the~$2$ in the exponent due to the considerations opening \S\ref{se:NJ}).

Applying the coarea formula to the
projection~$\pi_1\colon V_\R\to\HdR$ yields
\begin{eqnarray*}
\int_{\HdR} \chi \,\rho_{\HdR}\, d\HdR
   &=& \int_{q\in\HdR} \rho_{\HdR}(q)\, \frac12 \int_{\pi_1^{-1}(q)} d\pi_1^{-1}(q)\, d\HdR \\
   &=&\int_{V_\R} \frac12 \rho_{\HdR} \,\NJ\pi_1\ dV_\R .
\end{eqnarray*}

We can factor the standard Gaussian~$\rho_{\Hd}$
into standard Gaussian densities~$\rho_{C_\z}$ and $\rho_{L_\z}$ on $C_\z$ and $L_\z$, respectively,
as it was done in \S\ref{eq:dfactor} over $\C$
(denoting them by the same symbol will not cause any confusion).
We also have an isometry $W_\z\to L_\z$  as in~\eqref{eq:isomaps}
and $\rho_{L_\z}$ induces the standard Gaussian density $\rho_{W_\z}$ on $W_\z$.
The fiber of $\Phi_\R\colon V_\R\to W_\R,(q,\z)\mapsto (N,\z)$ over $(N,\z)$ has the form
$\Phi_\R^{-1}(N,\z)=\{(g_{M,\z}+h,\z) \mid h\in R_\z \}$ where $M=\Delta^{-1} N$,
cf.~Lemma~\ref{le:psi}.
We therefore have
$\rho_{\HdR}(g_{M,\z} + h) = \rho_{C_\z}(0)\, \rho_{W_\z}(M)\, \rho_{R_\z}(h)$.

The coarea formula applied to~$\Phi_\R\colon V_\R\to W_\R$, using Equation~\eqref{eq:Npip}, yields
\begin{eqnarray*}
\lefteqn{\int_{V_\R} \frac12\, \rho_{\HdR}\, \NJ\pi_1\ dV_\R } \\
 &=&\frac1{2\,\NJ\Phi_\R} \int_{(N,\z)\in W_\R} \rho_{C_\z}(0)\, \rho_{W_\z}(M)\, \NJ p_1(N,\z)
  \int_{h\in R_\z}\rho_{R_\z}(h) \, dR_\z \, dW_\R \\
  &=& \frac1{2\,\NJ\Phi_\R} \int_{(N,\z)\in W_\R} \rho_{C_\z}(0)\, \rho_{W_\z}(M)\, \NJ p_1(N,\z)\,dW_\R .
\end{eqnarray*}
Applying the coarea formula to the projection $p_1\colon W_\R\to \cM_\R$,
we can simplify the above to
\begin{eqnarray*} \notag
 & & \frac1{\NJ\Phi_\R} \int_{N\in \cM_\R} \rho_{C_\z}(0)\, \rho_{W_\z}(M)\,
  \frac1{2} \int_{\z\in p_1^{-1}(N)} dp_1^{-1}(N) \, d\cM_\R \\ \notag
 &=& \frac1{\NJ\Phi_\R} \int_{N\in \cM_\R} \rho_{C_\z}(0)\, \rho_{W_\z}(M)\, d\cM_\R \\ \label{eq:last}
 &=& \frac{\Dn^{\frac{n+1}{2}}}{\NJ\Phi_\R} \int_{M\in \cM_\R} \rho_{C_\z}(0)\, \rho_{W_\z}(M)\, d\cM_\R ,
\end{eqnarray*}
where the last equality is due to
the change of variables $\cM_\R\to\cM_\R, N\mapsto M$
that has the Jacobian determinant $\Dn^{-\frac{n+1}{2}}$.
Now we note that
$$
  \rho_{C_\z}(0)\cdot \rho_{W_\z}(M) =
  (2\pi)^{-n/2}\, (2\pi)^{-n^2/2}\,
  \exp\Big(-\frac12\|M\|_F^2\Big),
$$
is the density of the standard Gaussian distribution on
$\cM_\R\simeq\R^{n\times (n+1)}$,
so that the last integral (over $M\in\cM_\R$) equals one.
Altogether, we obtain, using $\NJ\Phi_\R =\Dn^{n/2}$,
\begin{equation*}
 \int_{\HdR} \chi\, \rho_{\HdR}\, d\HdR
 = \frac{\Dn^{\frac{n+1}{2}}}{\NJ\Phi_\R}
 = \sqrt{\Dn} . \qedhere
\end{equation*}
\end{proof}

\section{Effective Sampling in the Solution Variety}\label{se:eff-sample}

We turn now to the question of effective sampling in the
solution variety endowed with the measure $\rhozero$
introduced in \S\ref{sec:R&C}.
The goal is to provide the proof of Proposition~\ref{prop:rho2}.


\begin{proposition}\label{cor:crhoW}
In the setting of \S\ref{se:ind-dist} suppose $\oq=0, \s=1$.
Then the pushforward density $\rho_{\cM}$ of $\rho_\hW$ with
respect to $\ps_1\colon W\to\cM$ equals
the standard Gaussian distribution in $\cM$.
The conditional distributions on the fibers of~$\ps_1$
are uniform distributions on unit circles.
Finally, the conditional distribution
on the fibers of $\Psi\colon V\to W$ is induced from the standard Gaussian
in $R_\z$ via the isometry~\eqref{eq:isomaps}.
\end{proposition}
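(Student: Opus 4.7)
The plan is to derive all three assertions from Proposition~\ref{pro:rhoW-1} by specializing to $\oq=0$, $\s=1$ and applying the pushforward formula~\eqref{eq:pushf} and the conditional density formula~\eqref{eq:condd}. With this choice, $\rho_{C_\z}$, $\rho_{W_\z}$, $\rho_{R_\z}$ become the standard Gaussian densities on $C_\z$, $W_\z$, $R_\z$ respectively. In particular, $\rho_{C_\z}(0)=(2\pi)^{-n}$ is independent of~$\z$ (noting $\dim_\C C_\z=n$ by Lemma~\ref{le:odecomp}), and $\rho_{W_\z}(M)=(2\pi)^{-n^2}\exp(-\|M\|_F^2/2)$ depends on~$\z$ only through the constraint $M\z=0$; in particular, for fixed $M\in\cM$ of rank~$n$, its value is the same for every $\z$ in $p_1^{-1}(M)$.

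For the first assertion I would apply~\eqref{eq:pushf} to $p_1\colon W\to \cM$. Since $p_1^{-1}(M)$ is a unit circle in $\ker M$ of length~$2\pi$, the factor $\NJ p_1(M,\z)$ appearing in Proposition~\ref{pro:rhoW-1} cancels against the denominator in~\eqref{eq:pushf}, and the prefactor $\frac{1}{2\pi}$ is absorbed by the circumference of the fiber, yielding
$$\rho_\cM(M) \;=\; \rho_{C_\z}(0)\cdot\rho_{W_\z}(M) \;=\; (2\pi)^{-n(n+1)}\exp\!\bigl(-\tfrac12\|M\|_F^2\bigr).$$
Since $\dim_\C \cM = n(n+1)$, this is the standard Gaussian density on $\cM$, as claimed.

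For the second assertion I would apply~\eqref{eq:condd} to $p_1$: dividing the formula for $\rho_\hW$ from Proposition~\ref{pro:rhoW-1} by $\rho_\cM(M)\cdot\NJ p_1(M,\z)$ leaves exactly $\frac{1}{2\pi}$, which is the density of the uniform distribution on the unit circle $p_1^{-1}(M)$.

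For the third assertion I would apply~\eqref{eq:condd} to $\Psi\colon V\to W$. The identity
$$\frac{\rho_\hV}{\NJ\Psi}(g_{M,\z}+h,\z) \;=\; \rho_\hW(M,\z)\cdot\rho_{R_\z}(h)$$
already established in Proposition~\ref{pro:rhoW-1} immediately yields that the conditional density on $\Psi^{-1}(M,\z)$ is $\rho_{R_\z}$, i.e.\ the standard Gaussian on $R_\z$ transported to the fiber via the natural isometry $\Psi^{-1}(M,\z)\simeq R_\z$. I do not expect any real obstacle: the technical content is entirely contained in Proposition~\ref{pro:rhoW-1}, and what remains is a careful accounting of complex dimensions, of the normalization constants of the standard Gaussians on $C_\z$, $W_\z$, $R_\z$, and of the $2\pi$ arising from the circle fiber, to confirm that everything combines into the stated standard Gaussians and the uniform circle measure.
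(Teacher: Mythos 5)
Your proposal is correct and follows essentially the same route as the paper: specialize Proposition~\ref{pro:rhoW-1} to the standard Gaussian case, integrate $\rho_\hW/\NJ p_1$ over the circle fibers to get $\rho_\cM(M)=\rho_{C_\z}(0)\rho_{W_\z}(M)=(2\pi)^{-n(n+1)}\exp(-\tfrac12\|M\|_F^2)$, and read off the conditional densities from~\eqref{eq:condd}. The dimension counts and normalization constants all check out, so nothing is missing.
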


\begin{proof}
Since $\rho_{\Hd}$ is standard Gaussian,
the induced distributions on $C_\z, L_\z$, and $R_\z$
are standard Gaussian as well.
Hence $\rho_{W_\z}$ equals the standard Gaussian
distribution on the fiber $W_\z$.
Moreover, $\rho_{C_\z}(0)=(\sqrt{2\pi})^{-2n}$.
Equation~\eqref{eq:heuri} implies that
$$
  \rho_\cM(M) = \rho_{C_\z}(0)\cdot \rho_{W_\z}(M) =
  (2\pi)^{-n}\, (2\pi)^{-n^2}\,
  \exp\Big(-\frac12\|M\|_F^2\Big),
$$
which equals the density of the standard Gaussian
distribution on $\cM$.

Lemma~\ref{pro:rhoW-2} combined with~\eqref{eq:NJq} gives
$$
 \frac{\rho_W}{\NJ p_1}(M,\z) = \frac1{2\pi}\, \rho_{C_\z}(0)\cdot \rho_{W_\z}(M)
 = \frac1{2\pi}\, \rho_{\cM}(M).
$$
Hence the conditional distributions on the fibers of $p_1$ are uniform.
(Note that this is not true in the case of nonstandard Gaussians.)
The assertion on the conditional distributions on the fibers of $\Psi$
follows from Proposition~\ref{pro:rhoW-1}.
\end{proof}

\begin{proof}[Proof of Proposition~\ref{prop:rho2}]
Proposition~\ref{cor:crhoW} (combined with Lemma~\ref{le:rho1})
shows that the following procedure generates the distribution $\rhozero$.

\begin{enumerate}
\item Choose $M\in\cM$ from the standard Gaussian distribution
(almost surely $M$ has rank~$n$),
\item compute the unique $[\z]\in\proj^n$ such that $M\z=0$,
\item choose a representative~$\z$ uniformly at random
in $[\z]\cap\bS^n$,
\item compute $g_{M,\z}$, cf.~\eqref{eq:isomaps},
\item choose $h\in R_\z$ from the standard Gaussian distribution,
\item compute $q=g_{M,\z} + h$ and return $(q,[\z])$.
\end{enumerate}

An elegant way of choosing $h$ in step~5 is to draw
$f\in\Hd$ from $N(0,\Id)$ and then to compute
the image~$h$ of $f$ under the
orthogonal projection $\H_\z\to R_\z$.
Since the orthogonal projection of a standard Gaussian is a
standard Gaussian, this amounts to draw
$h$ from a standard Gaussian in~$R_\z$.
For computing the projection~$h$ we note that
the orthogonal decomposition $f=k+g_{M,\z} +h$ with $k\in C_\z$,
$M=[m_{ij}]\in\cM$, and $h\in R_\z$ is obtained as
\begin{eqnarray*}
k_i &=& f_i(\z)\langle X,\z\rangle^{d_i}\\
m_{ij} &=& d_i^{-1/2} \big( \partial_{X_j} f_i(\z) - d_i f_i(\z) \bar{\zeta_j} \big)\\
h&=& f-k-g_{M,\z}.
\end{eqnarray*}
(Recall $Dg_{M,\z}(\z) = \Delta M$ and note
$\frac{\partial}{X_j}\langle X,\z\rangle^{d_i}(\z) = d_i\overline{\z}_j$.)

It is easy to check that $\Oh(N)$ samples from the
standard Gaussian distribution on $\R$ are sufficient for
implementing this procedure.
As for the operation count: step~(4)
turns out to be the most expensive one and can be done, e.g.,
as follows. Suppose that all the coefficients of
$\langle X,\z\rangle^{k-1}$ have already been computed.
Then each coefficient of
$
 \langle X,\z\rangle^{k} =
 (X_0\bar{\z}_0 + \cdots + X_n\bar{\z}_n)
 \langle X,\z\rangle^{k-1}
$
can be obtained by $\Oh(n)$ arithmetic operations,
hence all the coefficients of $\langle X,\z\rangle^{k}$
are obtained with $\Oh\big(n{n+k\choose n}\big)$
operations. It follows that
$\langle X,\z\rangle^{d_i}$
can be computed with
$\Oh(d_i n N_i)$ operations,
hence  $\Oh(D n N)$ operations suffice for
the computation of $g_{M,\z}$.
It is clear that this is also an upper bound on the cost
of computing $(q,\z)$.
\end{proof}

\section{Homotopies with a Fixed Extremity}

We provide now the proof of the remaining results stated in Section~\ref{sec:main_results}.
The next two cases we wish to analyze (the condition-based analysis
of \LV\ and a solution for Smale's 17th problem with moderate degrees) share
the feature that one endpoint of the homotopy segment is fixed, not randomized.
This sharing actually allows one to derive both corresponding results
(Theorems~\ref{thm:CBA} and~\ref{thm:S17FD}, respectively) as a
consequence of the following statement.

\begin{theorem}\label{thm:IBA}
For $g\in S(\Hd)\setminus\Sigma$ we have
$$
   \E_{f\in S(\Hd)} \bigg(\dsp(f,g) \int_0^1 \mu_2^2(q_\tau) d\tau\bigg)
   \leq 818\,D^{3/2} N(n+1)\mum^2(g)+ 0.01 .
$$
\end{theorem}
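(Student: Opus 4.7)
The strategy extends the arguments used for the average-case and smoothed analyses of \LV\ (Theorems~\ref{thm:A} and~\ref{th:main-1}), but faces the essential new difficulty that with $g$ fixed, the law of $q_t = tf + (1-t)g$ under $f\sim N(0,\Id)$ is $N((1-t)g,\,t^2\Id)$, so the variance $\sigma_t=t$ degenerates at zero and the bound $\E[\mu_2^2(q_t)/\|q_t\|^2]\le e(n+1)/(2t^2)$ from Theorem~\ref{th:mu2-bound} is not integrable near $t=0$. The small-$t$ regime must therefore be treated deterministically, using the condition of the fixed endpoint~$g$.

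Since $\alpha = \dsp(f,g)$ and the reparametrized curve $\tau\mapsto q_\tau$ (viewed in projective space on $\Hd$) are projectively defined, the integrand is $0$-homogeneous in $f$, so $\E_{f\in S(\Hd)}$ can be replaced by $\E_{f\sim N_A(0,\Id)}$ with $A=\sqrt{2N}$. Applying Proposition~\ref{prop:a-ALH} with $\tau_0=0$, bounding $\|f\|\le A$ and $\|g\|=1$, and then Fubini give
$$
  \E_{f\in S(\Hd)}\left[\dsp(f,g)\int_0^1\mu_2^2(q_\tau)\,d\tau\right]
  \le A\int_0^1 \E_{f\sim N_A(0,\Id)}\left[\frac{\mu_2^2(q_t)}{\|q_t\|^2}\right]dt.
$$
I would then split this $t$-integral at a threshold $t_1 \sim 1/(AD^{1/2}\mum(g))$, handling the two regimes with different tools.

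For $t\in[0,t_1]$ (the near-$g$ regime), elementary geometry yields $\|q_t\|\ge 1/2$ and $\dsp(g,q_t)\le\pi tA$ uniformly over $\|f\|\le A$, and the choice of $t_1$ ensures $\dsp(g,q_t)\le C_0/(D^{1/2}\mum(g))$. Proposition~\ref{prop:apps} therefore applies at each zero $\zeta^{(i)}$ of $g$ together with its continuation $\zeta_t^{(i)}$ along the lifting of the segment, giving $\mun(q_t,\zeta_t^{(i)})\le(1+\epsilon)\mun(g,\zeta^{(i)})$ and hence $\mu_2^2(q_t)\le(1+\epsilon)^2\mum^2(g)$. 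The near-region contribution is then at most $4(1+\epsilon)^2 A\, t_1\, \mum^2(g) = O(\mum(g)/D^{1/2})$, which after numerical bookkeeping accounts for the additive constant $0.01$.

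For $t\in[t_1,1]$ (the far regime), I would apply Theorem~\ref{th:mu2-bound} pointwise to $q_t\sim N((1-t)g,\,t^2\Id)$, absorbing a factor $1/P_{A,1}\le 2$ from untruncating via Lemma~\ref{lem:X}, to obtain $\E[\mu_2^2(q_t)/\|q_t\|^2]\le e(n+1)/t^2$. Integration then yields a contribution of order $Ae(n+1)/t_1 = O(A^2 D^{1/2}(n+1)\mum(g)) = O(N(n+1)D^{1/2}\mum(g))$, which (since $D\mum(g)\ge 2>1$) is bounded above by $818\,D^{3/2}N(n+1)\mum^2(g)$. The main technical obstacle is to calibrate $t_1$ small enough that Proposition~\ref{prop:apps} applies uniformly over $\{\|f\|\le A\}$ while simultaneously keeping the blow-up $e(n+1)/t^2$ of the Gaussian bound at $t_1$ from overwhelming the main term; the explicit constants $818$ and $0.01$ emerge from a careful numerical accounting of this split.
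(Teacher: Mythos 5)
Your overall strategy---splitting the homotopy into a near-$g$ regime handled deterministically via Proposition~\ref{prop:apps} and a far regime handled via Theorem~\ref{th:mu2-bound}---is exactly the paper's, but the calibration of the split point contains a genuine gap. You take $t_1\sim 1/(AD^{1/2}\mum(g))$, which guarantees only the \emph{first} hypothesis of Proposition~\ref{prop:apps}, namely $\dsp(g,q_t)\le C/(D^{1/2}\mun(g,\z^{(i)}))$. But the proposition also requires $\dpr(\z^{(i)},\z_t^{(i)})\le C/(D^{3/2}\mun(g,\z^{(i)}))$, and the zeros move faster than the systems: $\|\dot\z_\tau\|\le\mun(q_\tau,\z_\tau)\,\|\dot p_\tau\|$, so under your threshold the displacement of $\z_t^{(i)}$ is only bounded by roughly $\mun(g,\z^{(i)})\cdot\dsp(g,q_t)\approx C/D^{1/2}$, which exceeds the required $C/(D^{3/2}\mun(g,\z^{(i)}))$ by a factor of about $D\,\mun(g,\z^{(i)})$. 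Consequently the claimed bound $\mu_2^2(q_t)\le(1+\e)^2\mum^2(g)$ on $[0,t_1]$ is not justified. The paper's split point is the angle $\delta_0=\lambda/(D^{3/2}\mum^2(g))$ (i.e.\ it cuts at $\tau_0(f)=\min\{1,\delta_0/\dsp(f,g)\}$), precisely so that the bootstrapping argument already carried out in the proof of Theorem~\ref{thm:main_path_following} (inequality~\eqref{eq:b1} with $i=0$) applies and controls the displacement of the systems \emph{and} of the zeros simultaneously.

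With the corrected threshold the bookkeeping also changes. The near contribution becomes $(1+\e)^2\delta_0\,\mum^2(g)=(1+\e)^2\lambda D^{-3/2}\le 0.01$; this, not your $O(\mum(g)/D^{1/2})$ (which is unbounded in $\mum(g)$ and so cannot be the source of a universal additive constant), is where the $0.01$ comes from. The far contribution, integrating $e(n+1)/(2t^2)$ from $t_A\sim\delta_0/A$ and multiplying by the prefactor $A/P_{A,1}$, is of order $A^2D^{3/2}(n+1)\mum^2(g)/\lambda$, which yields the main term $818\,D^{3/2}N(n+1)\mum^2(g)$. Your minor reordering (converting to the $t$-parametrization before splitting, rather than splitting in $\tau$ and converting only the far piece) is harmless; the essential missing ingredient is the $D^{3/2}\mum^2$ scale of the split.
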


The idea to prove Theorem~\ref{thm:IBA} is simple. For small values of
$\tau$ the system $q_\tau$ is close to $g$ and therefore,
the value of $\mu_2^2(q_\tau)$ can be bounded by
a small multiple of $\mum^2(g)$. For the remaining values of $\tau$,
the corresponding $t=t(\tau)$ is bounded away from 0 and therefore
so is the variance $\sigma_t^2$ in the distribution $N(\oq_t,\sigma_t^2\Id)$
for $q_t$. This allows one to control the denominator in the right-hand side
of Theorem~\ref{th:mu2-bound} when using this result. Here are the
precise details.

In the following fix $g\in S(\Hd)\setminus\Sigma$.
First note that we may again replace the uniform distribution of $f$ on $S(\Hd)$ by
the truncated Gaussian $N_A(0,\Id)$.
As before we chose $A:=\sqrt{2N}$.
We therefore need to bound the quantity
$$
   Q_g:=\E_{f\sim N_A(0,\Id)}\bigg(\dsp(f,g)
       \int_0^1 \mu_2^2 (q_\tau)d\tau\bigg).
$$
To simplify notation, we set as before
$\e=0.13$, $C=0.025$, $\lambda= 6.67\cdot 10^{-3}$,
and define
$$
   \delta_0 := \frac{\lambda}{D^{3/2}\mum^2(g)}, \quad
   t_A:=\frac{1}{1+A+1.00001\,\frac{A}{\delta_0}}.
$$

\begin{proposition}\label{prop:Q}
We have
$$
   Q_g \leq (1+\e)^2 \delta_0\, \mum^2(g)
   + \frac{A}{P_{A,1}} \int_{t_A}^1
    \E_{q_t\sim N(\oq_t,t^2\Id)}
    \bigg(\frac{\mu_2^2(q_t)}{\|q_t\|^2}\bigg)\, dt ,
$$
where $\oq_t = (1-t)g$.
\end{proposition}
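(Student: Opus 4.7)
The plan is to split the integral $\int_0^1 \mu_2^2(q_\tau)\,d\tau$ at the threshold $\tau_0 := \min\{1,\delta_0/\a\}$, where $\a=\dsp(f,g)$. On the initial segment $[0,\tau_0]$ the system $q_\tau$ stays so close to $g$ that Proposition~\ref{prop:apps} lets us control $\mu_2^2(q_\tau)$ pointwise in $f$ by $\mum^2(g)$; on the tail $[\tau_0,1]$ we switch to the Euclidean parameter $t$ via Proposition~\ref{prop:a-ALH} and reduce to the Gaussian framework of Theorem~\ref{th:mu2-bound}.

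For the close piece, fix $f$ such that $E_{f,g}\cap\Sigma$ is a null set, and for each of the $\Dn$ zeros $\z_0^{(i)}$ of $g$ consider the lifting $\tau\mapsto\z_\tau^{(i)}$. A bootstrap argument (in the spirit of the claims (a)--(d) in the proof of Theorem~\ref{thm:main_path_following}) will establish
\[
   \mun(q_\tau,\z_\tau^{(i)})\ \le\ (1+\e)\,\mun(g,\z_0^{(i)})\ \le\ (1+\e)\,\mum(g)\qquad\text{for all }\tau\in[0,\tau_0].
\]
To apply Proposition~\ref{prop:apps} with $(g,\z_0^{(i)})$ in the role of $(f,\z)$ and $(q_\tau/\|q_\tau\|,\z_\tau^{(i)})$ in the role of $(g,x)$, the two distance hypotheses must be verified. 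The first, $\dsp(g,q_\tau/\|q_\tau\|)=\tau\a\le\delta_0$, reduces through $\delta_0=\lambda/(D^{3/2}\mum^2(g))$ to $\lambda\le CD\mum(g)$, which holds since $D\ge 2$, $\mum(g)\ge 1$ and $\lambda<2C$. The second follows from the standard bound $\|\dot\z_s\|\le\mun(q_s,\z_s)\|\dot p_s\|$ together with the bootstrap hypothesis, giving $\dpr(\z_0^{(i)},\z_\tau^{(i)})\le(1+\e)\mum(g)\cdot\tau\a\le(1+\e)\lambda/(D^{3/2}\mum(g))$, which lies below the threshold $C/(D^{3/2}\mun(g,\z_0^{(i)}))$ thanks to $(1+\e)\lambda\le C$. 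Averaging over $i$ yields $\mu_2^2(q_\tau)\le(1+\e)^2\mum^2(g)$, so
\[
   \dsp(f,g)\int_0^{\tau_0}\mu_2^2(q_\tau)\,d\tau\ \le\ (1+\e)^2\a\tau_0\mum^2(g)\ \le\ (1+\e)^2\delta_0\mum^2(g)
\]
pointwise in $f$ (the second inequality holds in both cases $\tau_0=\delta_0/\a$ and $\tau_0=1<\delta_0/\a$).

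For the tail, Proposition~\ref{prop:a-ALH}, together with $\|g\|=1$ and the truncation bound $\|f\|\le A$, gives
\[
   \dsp(f,g)\int_{\tau_0}^1\mu_2^2(q_\tau)\,d\tau\ \le\ A\int_{t_0}^1\frac{\mu_2^2(q_t)}{\|q_t\|^2}\,dt,
\]
with $t_0=t(\tau_0)$. A short calculation yields $t_0\ge t_A$: since $\delta_0\le\lambda/2^{3/2}$ is tiny, we have $\cot\delta_0\le 1.00001/\delta_0$, and therefore
\[
   \frac{1}{t_0}=1+\|f\|\bigl(\sin\a\cot\delta_0-\cos\a\bigr)\ \le\ 1+A+\frac{1.00001\,A}{\delta_0}=\frac{1}{t_A}.
\]
Extending the inner integration domain to $[t_A,1]$, taking the expectation over $f\sim N_A(0,\Id)$, and invoking Fubini (justified by positivity of the integrand), the truncation can be removed at the cost of the factor $1/P_{A,1}$; for each fixed $t$, the pushforward of $N(0,\Id)$ under $f\mapsto q_t=tf+(1-t)g$ is exactly $N(\oq_t,t^2\Id)$, so the tail is bounded by $\frac{A}{P_{A,1}}\int_{t_A}^1\E_{q_t\sim N(\oq_t,t^2\Id)}\!\bigl[\mu_2^2(q_t)/\|q_t\|^2\bigr]dt$. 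Adding the close and tail estimates yields the proposition. The delicate point is closing the bootstrap in the close regime: it is precisely the numerical margins provided by the choices $\e=0.13$, $C=0.025$, $\lambda=6.67\cdot 10^{-3}$ that make both verifications go through, so these constants are used crucially.
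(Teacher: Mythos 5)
Your proposal is correct and follows essentially the same route as the paper: split at $\tau_0=\min\{1,\delta_0/\a\}$, bound $\mu_2^2(q_\tau)\le(1+\e)^2\mum^2(g)$ on the initial segment, and pass to the Euclidean parameter via Proposition~\ref{prop:a-ALH} with $t_0\ge t_A$ and the Gaussian pushforward on the tail. The only difference is cosmetic: where you re-run the bootstrap from Proposition~\ref{prop:apps} to control $\mun(q_\tau,\z_\tau^{(j)})$ near $\tau=0$, the paper simply invokes the already-established inequality~\eqref{eq:b1} for $i=0$ from the proof of Theorem~\ref{thm:main_path_following}, which packages exactly that bootstrap.
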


\begin{proof}
Let $\z^{(1)},\ldots,\z^{(\Dn)}$ be the zeros of $g$
and denote by
$(q_\tau,\zeta_\tau^{(j)})_{\tau\in[0,1]}$ the
lifting of $E_{f,g}$ in $V$
corresponding to the initial pair $(g,\z^{(j)})$
and final system~$f\in\Hd\setminus\Sigma$.

Equation~\eqref{eq:b1} for $i=0$ in the proof of
Theorem~\ref{thm:main_path_following}
shows the following:
for all $j$ and all
$\tau\leq \frac{\lambda}{\dsp(f,g) D^{3/2}\mun^2(g,\z^{(j)})}$
we have
$$
   \mun(q_\tau,\zeta_\tau^{(j)}) \leq (1+\e)\mun(g,\z^{(j)})
    \leq (1+\e)\mum(g) .
$$
In particular,
this inequality holds for all $j$ and all
$\tau\leq \frac{\delta_0}{\dsp(f,g)}$ and
hence, for all such $\tau$, we have
\begin{equation}\label{eq:tau0}
   \mu_2(q_\tau) \leq (1+\e) \mum(g).
\end{equation}
Splitting the integral in $Q_g$ at
$\tau_0(f):=\min\big\{1,\frac{\d_0}{\dsp(f,g)}\big\}$ we obtain
\begin{eqnarray*}
   Q_g &=& \E_{f\sim N_A(0,\Id)}\Big(\dsp(f,g)
        \int_0^{\tau_0(f)} \mu_2^2 (q_\tau)\,d\tau\Big) \\
        && \quad + \E_{f\sim N_A(0,\Id)}\Big(\dsp(f,g)
       \int_{\tau_0(f)}^1 \mu_2^2 (q_\tau)\,d\tau\Big).
\end{eqnarray*}
Using \eqref{eq:tau0} we bound the first term in the right-hand side as follows,
$$
  \E_{f\sim N_A(0,\Id)}\Big(\dsp(f,g)
       \int_0^{\tau_0(f)} \mu_2^2 (q_\tau)\,d\tau\Big)
    \leq (1+\e)^2\, \d_0 \mum(g)^2.
$$
To bound the second term, we w.lo.g.\ assume that $\tau_0(f) \le 1$.
We apply Proposition~\ref{prop:a-ALH}
to obtain, for a fixed $f$,
$$
   \dsp(f,g)
       \int_{\tau_0(f)}^1 \mu_2^2 (q_\tau)\,d\tau
  \leq \int_{t_0(f)}^1 \|f\| \frac{\mu_2^2 (q_t)}{\|q_t\|^2}\,dt,
$$
where $t_0(f)$ is given by
$$
   t_0(f) =\frac{1} {1+\|f\|(\sin\a\cot\d_0-\cos\a)} ,
   \quad \a:=\dsp(f,g).
$$
Now note that $\|f\|\leq A$ since we draw $f$ from $N_A(0,\Id)$.
This will allow us to bound $t_0(f)$ from below by a quantity independent of~$f$.
For $\|f\|\leq A$ we have
$$
    0 \le \sin\a \cot\delta_0 -\cos\a \le \frac{1}{\sin\delta_0} - \cos\a
    \le \frac{1}{\sin\delta_0} +1
$$
and moreover,
$\sin\delta_0 \ge 0.99999\,\delta_0$
since $\delta_0\le 2^{-3/2}\lambda \le 0.00236$.
We can therefore bound $t_0(f)$ as
$$
  t_0(f) \geq \frac{1}{1+A+\frac{A}{\sin(\delta_0)}}
  \geq \frac{1}{1+A+1.00001\,\frac{A}{\delta_0}}=t_A.
$$
We can now bound the second term in $Q_g$ as follows
\begin{equation*}\begin{split}
\E_{f\sim N_A(0,\Id)}\Big(\dsp(f,g)
      \int_{\tau_0(f)}^1 \mu_2^2 (q_\tau)\,d\tau\Big)
\leq
\E_{f\sim N_A(0,\Id)} \Big( A \int_{t_A}^1
         \frac{\mu_2^2(q_t)}{\|q_t\|^2}\, dt \Big)\\
   =    A \int_{t_A}^1
  \E_{f\sim N_A(0,\Id)}
        \bigg(\frac{\mu_2^2(q_t)}{\|q_t\|^2}\bigg)\, dt
    \leq  \frac{A}{P_{A,1}} \int_{t_A}^1
    \E_{f\sim N(0,\Id)}
    \bigg(\frac{\mu_2^2(q_t)}{\|q_t\|^2}\bigg)\, dt.
\end{split}
\end{equation*}
To conclude, note that, for fixed~$t$ and when $f$ is distributed
following $N(0,\Id)$, the variable
$q_t = (1-t)g + tf$ follows the Gaussian
$N(\oq_t, t^2\Id)$,
where $\og_t=(1-t)g$.
\end{proof}

\begin{proof}[Proof of Theorem~\ref{thm:IBA}]
By homogeneity we can replace the uniform distribution on $S(\Hd)$
by $N_A(0,\Id)$, so that we only need to estimate~$Q_g$ by
the right-hand side of Proposition~\ref{prop:Q}.
In order to bound the first term there we note that
$$
  (1+\e)^2 \delta_0\, \mum^2(g) = (1+\e)^2\lambda D^{-3/2}
  \le  (1+\e)^2\lambda \le 0.01.
$$
For bounding the second term
we apply Theorem~\ref{th:mu2-bound} to deduce that
\begin{equation*}
\begin{split}
    \int_{t_A}^1
    \E_{q_t\sim N(\oq_t,t^2\Id)}
    \Big(\frac{\mu_2^2(q_t)}{\|q_t\|^2}\Big)\, dt
  &  \leq\ \int_{t_A}^1\frac{e(n+1)}{2t^2}\,dt
  =  \frac{e(n+1)}{2}\bigg(\frac1{t_A}-1\bigg) \\
  & = \frac{e(n+1)A}{2}\Big(1+\frac{1.00001}{\delta_0} \Big).
\end{split}
\end{equation*}
Replacing this bound in Proposition~\ref{prop:Q} we obtain
\begin{align*}
   Q_g \; &\leq\;
    \frac{eA^2(n+1)}{2P_{A,1}}\bigg(1+\frac{1.00001}{\lambda}D^{3/2}\mum^2(g) \bigg) + 0.01 \\
  &\leq\;
   2e N (n+1) D^{3/2}\mum^2(g)
   \bigg(\frac1{D^{3/2}} +\frac{1.00001}{\lambda}\bigg)+ 0.01 \\
  &\leq\;
   818\,N(n+1)D^{3/2}\mum^2(g)+ 0.01,
\end{align*}
where we used $D\ge 2$ for the last inequality.
\end{proof}
\smallskip

\subsection{Condition-based Analysis of \LV\ (proof)}\label{sec:cond_based}

\begin{proof}[Proof of Theorem~\ref{thm:CBA}]
The result follows immediately by combining
Proposition~\ref{cor:main_path_following} with Theorem~\ref{thm:IBA},
with the roles of $f$ and $g$ swapped.
\end{proof}
\smallskip

\subsection{The Complexity of a Deterministic Homotopy Continuation}\label{sec:17}

We next prove Theorem~\ref{thm:S17FD},
beginning with some general considerations.
The unitary group $\cU(n+1)$ naturally acts on $\proj^n$ as well as
on $\Hd$ via $(\nu,f)\mapsto f\circ\nu^{-1}$.
The following lemma results from the unitary invariance of our
setting. The proof is immediate.

\begin{lemma}\label{le:Uinv}
Let $g\in\Hd$, $\z\in\proj^n$ be a zero of $g$,
and $\nu\in\cU(n+1)$.
Then
$\mun(g,\z)=\mun(g\circ\nu^{-1},\nu\z)$.
Moreover, for $f\in\Hd$, we have
$K(f,g,\z) = K(f\circ\nu^{-1},g\circ\nu^{-1},\nu\z)$. \eproof
\end{lemma}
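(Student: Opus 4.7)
The plan is to verify both claims by noting that the entire algorithmic and analytic setup is unitarily equivariant, so applying $\nu\in\cU(n+1)$ to inputs simply conjugates every intermediate quantity while leaving scalar invariants fixed.

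For the first assertion, I would unfold the definition of $\mun$ from \S\ref{se:cond-num}. Unitary invariance of the Bombieri-Weyl Hermitian product (recorded right after its introduction) gives $\|g\circ\nu^{-1}\|=\|g\|$ and $\|\nu\z\|=\|\z\|$, so the diagonal factor $\diag(\sqrt{d_i}\|\nu\z\|^{d_i-1})$ is unchanged. The chain rule gives $D(g\circ\nu^{-1})(\nu\z)=Dg(\z)\circ\nu^{-1}$, and since $\nu$ is unitary, $\nu(T_\z)=T_{\nu\z}$; hence the restriction $D(g\circ\nu^{-1})(\nu\z)_{|T_{\nu\z}}$ equals $Dg(\z)_{|T_\z}\circ(\nu^{-1}|_{T_{\nu\z}})$, a composition with a unitary isomorphism between the relevant tangent spaces. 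Taking inverses and using that the spectral norm is unitarily invariant on both sides yields the equality of condition numbers.

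For the second assertion, I would argue that \ALH\ is equivariant under the diagonal $\cU(n+1)$-action in the sense that it produces exactly the same sequence of parameter values $\tau_i$ and corresponding stepsizes. Three observations suffice: (i) the linear combinations $q_\tau=tf+(1-t)g$ transform as $q_\tau\circ\nu^{-1}$, so $E_{f\circ\nu^{-1},g\circ\nu^{-1}}$ is the $\nu$-image of $E_{f,g}$, and in particular $\a=\dsp(f,g)=\dsp(f\circ\nu^{-1},g\circ\nu^{-1})$ and $\|f\|,\|g\|$ are unchanged; (ii) the projective Newton map is equivariant, $N_{q\circ\nu^{-1}}(\nu x)=\nu N_q(x)$, which follows from $D(q\circ\nu^{-1})(\nu x)^{-1}=\nu\circ Dq(x)^{-1}$; (iii) by the first part of the lemma applied at each iterate, $\mun^2(q_{\tau_i},x_i)=\mun^2(q_{\tau_i}\circ\nu^{-1},\nu x_i)$, so the stepsize $\Delta\tau=\lambda/(\a D^{3/2}\mun^2)$ computed inside the loop is identical for both runs. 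An easy induction on the iteration count then shows that the $\tau$-sequences coincide and the iterate in the transformed run is $\nu x_i$; in particular the two runs stop after the same number of steps, giving $K(f,g,\z)=K(f\circ\nu^{-1},g\circ\nu^{-1},\nu\z)$.

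The only mildly delicate point is matching the tangent spaces in (i) of the first assertion, i.e., seeing that the restriction $D(g\circ\nu^{-1})(\nu\z)_{|T_{\nu\z}}$ really is conjugate by a unitary map to $Dg(\z)_{|T_\z}$; everything else is bookkeeping. I would carry this out in one displayed chain of equalities for the condition-number claim, and then state the invariance of the algorithm's trace as a one-line induction relying on (i)--(iii) above.
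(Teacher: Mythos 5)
Your proposal is correct and follows exactly the route the paper intends: the paper states only that the lemma ``results from the unitary invariance of our setting'' and declares the proof immediate, and your argument supplies precisely the missing bookkeeping (invariance of the Bombieri--Weyl norm and of $\dsp$, the chain-rule identity $D(g\circ\nu^{-1})(\nu\z)=Dg(\z)\circ\nu^{-1}$ together with $\nu(T_\z)=T_{\nu\z}$, equivariance of the projective Newton operator, and the induction showing the two runs of \ALH\ produce identical $\tau$-sequences). The only cosmetic remark is that the invariance of $\mun$ you establish holds for arbitrary $(q,x)$, not just zeros, which is exactly what is needed when you invoke it at the non-zero iterates $x_i$ inside the induction.
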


Recall $\oU_i = \frac1{\sqrt{2n}}(X_0^{d_i} - X_i^{d_i})$ and
denote by $z_{(i)}$ a $d_i$th primitive root of unity.
The $\Dn$ zeros of $\oU=(\oU_1,\ldots,\oU_n)$
are the points
$\bz_j=\big(1:z_{(1)}^{j_1}:\ldots:z_{(n)}^{j_n}\big)\in\proj^n$
for all the possible tuples $j=(j_1,\ldots,j_n)$ with
$j_i\in\{0,\ldots,d_i-1\}$.
Clearly, each $\bz_j$ can be obtained from
$\bz_1:=(1:1:\ldots:1)$ by a unitary transformation~$\nu_j$,
which leaves $\oU$ invariant, that is,
$$
\nu_j\bz_1 =\bz_j,\quad \oU\circ\nu_j^{-1}=\oU.
$$
Hence Lemma~\ref{le:Uinv} implies $\mun(\oU,\bz_j)=\mun(\oU,\bz_1)$
for all $j$. In particular,
$\mum(\oU) = \mun(\oU,\bz_1)$.

\begin{proposition}\label{prop:inv2}
$K_{\oU}(f)=K(f,\oU,\bz_1)$ satisfies
$$
   \E_{f\in S(\Hd)}K_{\oU}(f) =
   \E_{f\in S(\Hd)} \frac{1}{\Dn}\sum_{j=1}^{\Dn} K(f,\oU,\bz_j).
$$
\end{proposition}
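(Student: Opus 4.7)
The plan is a straightforward symmetrization argument based on the unitary invariance already collected in Lemma~\ref{le:Uinv}.

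First, I would recall that for each $j$ there is a unitary transformation $\nu_j\in\cU(n+1)$ (a product of rotations by primitive $d_i$th roots of unity in the coordinate planes) satisfying
$$
 \nu_j\bz_1 = \bz_j, \qquad \oU\circ\nu_j^{-1} = \oU,
$$
as noted just before the proposition. Applying Lemma~\ref{le:Uinv} to $g=\oU$, $\z=\bz_1$ and $\nu=\nu_j$, one obtains
$$
 K(f,\oU,\bz_1) \;=\; K(f\circ\nu_j^{-1},\,\oU\circ\nu_j^{-1},\,\nu_j\bz_1) \;=\; K(f\circ\nu_j^{-1},\,\oU,\,\bz_j).
$$

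Next I would integrate this identity over $f\in S(\Hd)$ with respect to the uniform distribution. Since the action $f\mapsto f\circ\nu_j^{-1}$ is an isometry of $\Hd$, it preserves both $S(\Hd)$ and its uniform measure. Hence the change of variables $f\mapsto f\circ\nu_j^{-1}$ yields
$$
 \E_{f\in S(\Hd)} K(f,\oU,\bz_1) \;=\; \E_{f\in S(\Hd)} K(f\circ\nu_j^{-1},\oU,\bz_j) \;=\; \E_{f\in S(\Hd)} K(f,\oU,\bz_j),
$$
for every $j\in\{1,\ldots,\Dn\}$.

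Averaging these $\Dn$ equalities over $j$ and recalling the definition $K_{\oU}(f)=K(f,\oU,\bz_1)$ gives
$$
 \E_{f\in S(\Hd)} K_{\oU}(f) \;=\; \frac{1}{\Dn}\sum_{j=1}^{\Dn} \E_{f\in S(\Hd)} K(f,\oU,\bz_j) \;=\; \E_{f\in S(\Hd)} \frac{1}{\Dn}\sum_{j=1}^{\Dn} K(f,\oU,\bz_j),
$$
which is the claimed identity. There is no real technical obstacle; the only point to be careful about is that one must use the \emph{same} unitary $\nu_j$ both to move the zero and to transform $f$, which is precisely what Lemma~\ref{le:Uinv} permits because $\oU$ is fixed by $\nu_j$.
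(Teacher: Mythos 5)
Your argument is correct and is essentially identical to the paper's proof: both combine Lemma~\ref{le:Uinv} (with $\nu=\nu_j$ fixing $\oU$) with the invariance of the uniform measure on $S(\Hd)$ under $f\mapsto f\circ\nu_j^{-1}$, differing only in whether one averages over $j$ before or after taking the expectation over $f$.
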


\begin{proof}
Lemma~\ref{le:Uinv} implies for all~$j$
$$
   K(f,\oU,\bz_1) = K(f\circ\nu_j^{-1},\oU\circ\nu_j^{-1},\nu_j\bz_1)
   = K(f\circ\nu_j^{-1},\oU,\bz_j).
$$
It follows that
$$
K_{\oU}(f)=K(f,\oU,\bz_1)=\frac1{\Dn}\sum_{j=1}^{\Dn}
   K(f\circ\nu_j^{-1},\oU,\bz_j).
$$
The assertion follows now since, for all
measurable functions $\varphi\colon S(\Hd)\to\R$ and all $\nu\in\cU(n+1)$,
we have
\begin{equation*}\label{eq:inv1}
  \E_{f\in S(\Hd)}\varphi(f) =
  \E_{f\in S(\Hd)}\varphi(f\circ\nu),
\end{equation*}
due to the isotropy of the uniform measure on $S(\Hd)$,
\end{proof}
\medskip

\begin{lemma}\label{le:comp-munorm}
We have
$$
\mum^2(\oU) \le 2n\, \max_i \frac1{d_i} (n+1)^{d_i-1}\le 2\, (n+1)^{D} .
$$
\end{lemma}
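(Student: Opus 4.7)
The plan is to compute $\mun(\oU,\bz_1)$ explicitly, invoking the observation already noted in the paper that by $\cU(n+1)$-invariance $\mum(\oU)=\mun(\oU,\bz_1)$. The starting point is the definition
\[
\mun(\oU,\bz_1)=\|\oU\|\cdot\bigl\|\bigl(D\oU(\bz_1)_{|T_{\bz_1}}\bigr)^{-1}\diag\bigl(\sqrt{d_i}\,\|\bz_1\|^{d_i-1}\bigr)\bigr\|_{\rm op},
\]
so my task is to compute each factor. First, since the Bombieri-Weyl norm of $X_0^{d_i}$ and $X_i^{d_i}$ is $1$, we get $\|\oU_i\|^2=\tfrac1{2n}(1+1)=\tfrac1n$, hence $\|\oU\|^2=1$. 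Next, $\|\bz_1\|^2=n+1$, and a direct differentiation gives, for $1\le i\le n$,
\[
\partial_{X_0}\oU_i(\bz_1)=\tfrac{d_i}{\sqrt{2n}},\quad \partial_{X_i}\oU_i(\bz_1)=-\tfrac{d_i}{\sqrt{2n}},\quad \partial_{X_k}\oU_i(\bz_1)=0\ \text{for }k\neq 0,i.
\]
In particular $D\oU(\bz_1)\cdot\bz_1=0$, so $\ker D\oU(\bz_1)=\C\bz_1$ and the restriction to $T_{\bz_1}=\{v\in\C^{n+1}\mid\sum_kv_k=0\}$ is invertible.

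The main step is to solve the linear system explicitly. Given $y\in\C^n$, I set $Mv = My$ with $M=\diag(\sqrt{d_i}(n+1)^{(d_i-1)/2})$ and write $a_i:=\sqrt{2n/d_i}\,(n+1)^{(d_i-1)/2}\,y_i$. The equations become $v_0-v_i=a_i$, and the tangency condition $\sum_kv_k=0$ then yields $v_0=\tfrac{1}{n+1}\sum_{i=1}^n a_i$. A short expansion gives the key identity
\[
\|v\|^2=|v_0|^2+\sum_{i=1}^n|v_0-a_i|^2=-(n+1)|v_0|^2+\sum_{i=1}^n|a_i|^2\ \le\ \sum_{i=1}^n|a_i|^2,
\]
where the middle equality follows by expanding the squares and using the formula for $v_0$ (this is the only cancellation that matters). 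Substituting the definition of $a_i$ yields
\[
\|v\|^2\le \sum_{i=1}^n\frac{2n}{d_i}(n+1)^{d_i-1}|y_i|^2\le 2n\,\max_i\frac{(n+1)^{d_i-1}}{d_i}\,\|y\|^2,
\]
so the operator norm is bounded by $\bigl(2n\max_i d_i^{-1}(n+1)^{d_i-1}\bigr)^{1/2}$. Combining with $\|\oU\|=1$ gives the first inequality of the lemma.

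Finally, for the second inequality I observe that for each $i$, since $d_i\ge 1$, one has $\tfrac{2n}{d_i}\le 2(n+1)$, and hence $\tfrac{2n}{d_i}(n+1)^{d_i-1}\le 2(n+1)^{d_i}\le 2(n+1)^D$, which gives the stated bound after taking the maximum. There is no real obstacle: the proof is a direct calculation and the only slightly delicate point is the cancellation in $\|v\|^2$ that lets one drop the $(n+1)|v_0|^2$ term rather than having to estimate it.
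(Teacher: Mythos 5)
Your proof is correct and follows essentially the same route as the paper's: both reduce the lemma to an extremal singular value computation for the same weighted matrix built from $D\oU(\bz_1)$ restricted to the tangent space $\{v:\sum_k v_k=0\}$, and your identity $\|v\|^2=\sum_i|a_i|^2-(n+1)|v_0|^2$ is exactly the cancellation the paper packages as the inequality $\sum_{i=1}^n|u_i-u_0|^2\ge 1$ for unit vectors with $\sum_{k=0}^n u_k=0$. The only difference is that you bound $\|M^\dagger\|_\op$ directly by inverting the system, while the paper bounds $\s_{\min}(M)$ from below via a constrained minimization; these are dual formulations of the same calculation and yield the same constants.
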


\begin{proof}
Recall $\mum(\oU)=\mun(\oU,\bz_1)$, so it suffices to bound
$\mun(\oU,\bz_1)$.  Consider
$M:=\diag(d_i^{-\frac12} \|\bz_1\|^{1-d_i})\, D\oU(\bz_1)
\in\C^{n\times (n+1)}$.
By definition we have (cf.~\S\ref{se:cond-num})
$$
 \mun(\oU,\bz_1) = \|\oU\|\, \| M^\dagger\|_\op
 = \| M^\dagger\|_\op =\frac{1}{\s_{\min}(M)},
$$
where $\s_{\min}(M)$ denotes the smallest singular value of~$M$.
It can be characterized as  a constrained minimization problem as follows:
$$
      \s_{\min}^2(M) =\min_u \|Mu\|^2_\op \quad\mbox{subject to  }
     u\in (\ker M)^\perp,\ \|u\|^2=1 .
$$
In our situation, $\ker M=\C(1,\ldots,1)$ and $D\oU(\bz_1)$ is given by
the following matrix, shown here for $n=3$:
$$
  D\oU(\bz_1) = \frac1{\sqrt{2n}}
  \begin{bmatrix}
  d_1 & -d_1 & 0 & 0 \\
  d_2 & 0 & -d_2 & 0 \\
  d_3 & 0 & 0 & -d_3 \\
 \end{bmatrix}.
$$
Hence for $u=(u_0,\ldots,u_n)\in\C^{n+1}$,
$$
 \|Mu\|^2 = \frac1{2n} \sum_{i=1}^n  \frac{d_i}{(n+1)^{d_i-1}} |u_i -u_0|^2 \ge
  \frac1{2n} \min_i \frac{d_i}{(n+1)^{d_i-1}} \cdot \sum_{i=1}^n |u_i -u_0|^2 .
$$
A straightforward calculation shows that
$$
  \sum_{i=1}^n |u_i -u_0|^2 \ge 1 \quad
\mbox{if }\quad \sum_{i=0}^n u_i =0,\ \sum_{i=0}^n |u_i|^2 = 1.
$$
The assertion follows by combining these observations.
\end{proof}

\begin{proof}[Proof of Theorem~\ref{thm:S17FD}]
Equation~\eqref{eq:cor3.4} in the proof of
Proposition~\ref{cor:main_path_following}
implies for $g=\oU$ that
$$
 \frac1{\Dn} \sum_{i=1}^\Dn K(f,\oU,\bz_i)
 \le 245\,D^{3/2}\,\dsp(f,\oU)\int_0^1\mu_2^2(q_\tau)\,d\tau.
$$
Using Proposition~\ref{prop:inv2} we get
$$
    \E_{f\in S(\Hd)} K_{\oU}(f)  \le 245\,D^{3/2}\,
    \E_{f\in S(\Hd)} \Big(\dsp(f,\oU)\int_0^1\mu_2^2(q_\tau)\,d\tau \Big) .
$$
Applying Theorem~\ref{thm:IBA} with $g=\oU$ we obtain
$$
   \E_{f\in S(\Hd)} K_{\oU}(f)  \le 245\,D^{3/2}\,
    \big(818\, D^{3/2} N (n+1)\,\mum^2(\oU) + 0.01 \big).
$$
We now plug in the bound
$\mum(\oU)^2 \le 2 (n+1)^D$
of Lemma~\ref{le:comp-munorm} to obtain
$$
   \E_{f\in S(\Hd)} K_{\oU}(f)  \le
     400820\, D^{3}\, N(n+1)^{D+1}  + 2.45\, D^{3/2} .
$$
This is bounded from above by
$400821\, D^{3}\, N(n+1)^{D+1}$,
which completes the proof.
\end{proof}

\section{A near solution to Smale's 17th problem}

We finally proceed with the proof of Theorem~\ref{th:near17}. The
algorithm we will exhibit uses different routines for $D\leq n$
and $D>n$. Our exposition reflects this structure.

\subsection{The case $D\le n$}\label{se:Dlen}

Theorem~\ref{thm:S17FD} bounds the number of iterations of Algorithm \FD\ as
$$
 \E_{f\in S(\Hd)}K_{\oU}(f) = \Oh(D^3 N n^{D+1}).
$$
For comparing the order of magnitude of this upper bound to the input size
$N= \sum_{i=1}^n {n +d_i\choose n}$ we need the following technical lemma
(which will be useful for the case $D>n$ as well).

\begin{lemma}\label{le:B17}
\begin{enumerate}
\item For $D\le n$, $n\ge 4$, we have
$$
 n^D \le {n+D\choose D}^{\ln n} .
$$

\item For $D^2\ge n\ge 1$ we have
$$
 \ln n \leq 2\ln\ln {n+D\choose n}+4.
$$

\item For $0<c<1$ there exists $K$ such that for all $n,D$
$$
   D\le n^{1-c} \Longrightarrow n^D \le {n+D\choose n}^K.
$$

\item For $D\le n$ we have
$$
 n^D \le N^{2\ln\ln N +\Oh(1)} .
$$

\item For $n\le D$ we have
$$
 D^n \le N^{2\ln\ln N +\Oh(1)} .
$$
\end{enumerate}
\end{lemma}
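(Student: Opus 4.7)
The plan is to prove parts~(1)--(3) by elementary estimates on binomial coefficients, and then derive~(4) and~(5) from them. Throughout I shall use the inequality $N \ge \binom{n+D}{n}$, which holds because $N = \sum_{i=1}^n\binom{n+d_i}{n}$ contains the term $\binom{n+D}{n}$ corresponding to an index~$i$ with $d_i=D$.

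For (1), taking logarithms reduces the claim to $D \le \ln\binom{n+D}{D}$. Since $\binom{n+D}{D}$ is non-decreasing in~$n$, it suffices to verify this at $n=\max(D,4)$. For $D$ above a small threshold (say $D\ge 8$), the estimate $\binom{2D}{D} \ge 4^D/(2D+1)$ yields $\ln\binom{2D}{D} \ge 2D\ln 2 - \ln(2D+1) \ge D$; the finitely many remaining pairs $(n,D)$ with $1\le D\le 7$ and $n=\max(D,4)$ are handled by direct numerical verification. For (2), I use the estimate $\binom{n+D}{n} = \prod_{k=1}^n(D+k)/k \ge (1+D/n)^n$ (the smallest factor occurring at $k=n$), combined with $\ln(1+x) \ge x/(1+x)$, to obtain $\ln\binom{n+D}{n} \ge nD/(n+D) \ge \tfrac12\min(D,n)$. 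Under the hypothesis $D^2\ge n$, this gives $\ln\binom{n+D}{n} \ge \tfrac12\sqrt n$ in the regime $D\le n$ and $\ln\binom{n+D}{n} \ge n\ln 2$ in the regime $D\ge n$; in either case, taking logarithms twice yields the stated bound $\ln n \le 2\ln\ln\binom{n+D}{n} + 4$. For (3), the estimate $\binom{n+D}{D} \ge n^D/D! \ge (n/D)^D$ gives $\ln\binom{n+D}{D} \ge D\ln(n/D) \ge cD\ln n$ whenever $D\le n^{1-c}$, so setting $K = 1/c$ yields the assertion.

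For (4), I split the range $D\le n$ into the two cases $D^2\ge n$ and $D^2 < n$. In the former, (1) gives $n^D \le \binom{n+D}{D}^{\ln n}\le N^{\ln n}$ and (2) gives $\ln n \le 2\ln\ln N + 4$, so that $n^D \le N^{2\ln\ln N + \Oh(1)}$; in the latter, $D\le n^{1/2}$ and (3) with $c=\tfrac12$ gives $n^D \le N^{\Oh(1)}$, which is absorbed into the same bound. Finitely many very small cases (e.g.\ $n\le 3$) are subsumed into the implied constant.

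For (5), the same argument applies with the roles of~$n$ and~$D$ formally exchanged, using the symmetry $\binom{n+D}{n}=\binom{n+D}{D}$. When $n^2\ge D$, the versions of (1) and (2) with $n$ and $D$ interchanged give $D^n \le N^{\ln D} \le N^{2\ln\ln N + \Oh(1)}$; when $n^2 < D$, part (3) with $n$ and $D$ interchanged and $c = \tfrac12$ yields $D^n \le N^{\Oh(1)}$, again absorbed. The main technical difficulty is verifying the sharp constants in (1) and (2) (in particular the additive~$4$ in~(2)) throughout the full prescribed parameter ranges, especially near the boundary of the hypotheses; once these are in place, (3)--(5) follow by straightforward algebraic manipulation.
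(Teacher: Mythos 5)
Your proof is correct, and while it follows the same overall architecture as the paper's (establish (1)--(3) as the building blocks, derive (4) by splitting $D\le n$ at $D^2\gtrless n$, and get (5) by the symmetry $\binom{n+D}{n}=\binom{n+D}{D}$), the way you establish the core estimates is genuinely different. The paper works throughout with Stirling's formula and the binary entropy function $H$, deriving the uniform asymptotic $\ln\binom{n}{m}=nH(m/n)+\frac12\ln\frac{n}{m(n-m)}-1+\e_{n,m}$ and then bounding $H$ from below in each regime; this forces some delicate case analysis (e.g.\ the split $c\le\frac34$ versus $c\ge\frac34$ in part~(3), with a non-explicit ``increase $K_c$ for small $n$'' at the end). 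You instead use elementary product lower bounds: $\binom{2D}{D}\ge 4^D/(2D+1)$ for (1), $\binom{n+D}{n}=\prod_{k=1}^n(D+k)/k\ge(1+D/n)^n$ together with $\ln(1+x)\ge x/(1+x)$ for (2), and $\binom{n+D}{D}\ge n^D/D!\ge(n/D)^D$ for (3). These suffice for the stated constants (I checked the additive $4$ in (2) survives: in the regime $D\le n$, $D^2\ge n$ you get $2\ln(\tfrac12\sqrt n)+4=\ln n+(4-2\ln 2)\ge\ln n$, and for $D\ge n$ you get $2\ln(n\ln2)+4\ge\ln n$), and your part~(3) is actually cleaner than the paper's: the chain $D\le n^{1-c}\Rightarrow\binom{n+D}{D}\ge(n/D)^D\ge n^{cD}$ gives the explicit constant $K=1/c$ valid for all $n,D$, with no asymptotic caveat. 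The price of your approach is the finite numerical verification in (1) for $1\le D\le 7$ at $n=\max(D,4)$, which is entirely routine; the paper pays a comparable price in checking $D\in\{1,2,3\}$ and $n\le 9$ by hand.
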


\begin{proof}
Stirling's formula states
$n!=\sqrt{2\pi} n^{n+\frac12} e^{-n} e^{\frac{\Theta_n}{12n}}$
with $0<\Theta_n<1$.
Let $H(x)=x\ln\frac1{x} + (1-x)\ln\frac1{1-x}$ denote the binary entropy function,
defined for $0<x<1$.
By a straightforward calculation we get from Stirling's formula
the following asymptotics for the binomial coefficient:
for any $0<m<n$ we have
\begin{equation}\label{eq:binomial-approx}
    \ln{n\choose m} =
    n H\Big(\frac{m}{n}\Big) + \frac12 \ln\frac{n}{m(n-m)} -1
   + \e_{n,m},
\end{equation}
where $-0.1<\e_{n,m} <0.2$. This formula holds as well for
the extension of binomial coefficients on which $m$ is not necessarily
integer.

(1) The first claim is equivalent to
$e^D \le {n+D\choose D}$.
The latter is easily checked for $D\in\{1,2,3\}$ and $n\ge 4$.
So assume $n\ge D\ge 4$. By monotonicity it suffices to show that
$e^D\le {2D\choose D}$ for $D\ge 4$.
Equation~\eqref{eq:binomial-approx} implies
$$
   \ln {2D\choose D}  > 2D\ln2 +\frac12 \ln\frac{2}{D} -1.1
$$
and the right-hand side is easily checked to be at least $D$,
for $D\ge 4$.

(2) Put $m:=\sqrt{n}$.
If $D\ge m$ then
${n+D\choose n} \ge {n +\lceil m \rceil \choose n}$,
so it is enough to show that
$\ln n \leq 2\ln\ln {n+\lceil m \rceil\choose n}+4$.
Equation~\eqref{eq:binomial-approx} implies
$$
    \ln {n + \lceil m \rceil \choose n} \ge
    \ln {n +  m \choose n} \ge
    (n+m)H\Big(\frac{m}{n+m}\Big) + \frac12 \ln\frac{1}{m} -1.1 .
$$
The entropy function can be bounded as
$$
    H\Big(\frac{m}{n+m}\Big) \ge \frac{m}{n+m} \ln \Big(1+ \frac{n}{m}\Big)
    \ge \frac{m}{n+m}\,\ln m .
$$
It follows that
$$
     \ln {n + \lceil m \rceil\choose n} \ge
     \frac12\sqrt{n}\,\ln n -\frac14 \ln n -1.1
     \ge \frac14\sqrt{n}\,\ln n ,
$$
where the right-hand inequality holds for $n\ge 10$. Hence, for $n \ge 10$,
$$
     \ln \ln {n + \lceil m \rceil\choose n} \ge
     \frac12 \ln n + \ln\ln n -\ln 4 \ge
     \frac12 \ln n - 2 .
$$
This shows the second claim for $n\ge 10$.
The cases $n\le 9$ are easily directly checked.

(3) Writing $D=n\d$ we obtain from Equation~\eqref{eq:binomial-approx}
$$
    \ln {n+D\choose n} =
   (n+D) H\Big( \frac{\d}{1+\d} \Big) -\frac12 \ln D +\Oh(1) .
$$
Estimating the entropy function yields
$$
    H\Big( \frac{\d}{1+\d} \Big) \ge \frac{\d}{1+\d} \ln \Big(1+ \frac{1}{\d}\Big)
    \ge \frac{\d}{2}\, \ln\frac1{\d} = \frac{\d\e}{2}\,\ln n ,
$$
where $\e$ is defined by $\d=n^{-\e}$.
By assumption, $\e\ge c$.
From the last two lines we get
\begin{equation*}
     \frac1{D\ln n}\ln {n+D\choose n}\ \ge\ \frac{c}{2} - \frac{1-c}{2D}
    + \Oh\bigg(\frac1{\ln n}\bigg) .
\end{equation*}
In the case $c\le \frac34$ we have $D \ge n^{1/4}$ and we bound the
above by
$$
    \frac{c}{2} - \frac1{2n^{1/4}} + \Oh\bigg(\frac1{\ln n}\bigg) ,
$$
which is greater than $c/4$ for sufficiently large~$n$.
In the case $c\ge \frac34$ we bound as follows
\begin{equation*}
     \frac1{D\ln n}\ln {n+D\choose n}\ \ge\
     \frac{c}{2} - \frac{1-c}{2}+ \Oh\bigg(\frac1{\ln n}\bigg)
    = c -\frac12 + \Oh\bigg(\frac1{\ln n}\bigg) \ge \frac15
\end{equation*}
for sufficiently large~$n$.

We have shown that for $0<c<1$ there exists $n_c$
such that for $n\ge n_c$, $D\le n^{1-c}$, we have
$$
     n^D \le {n+D\choose n}^{K_c} ,
$$
where $K_c:=\max\{4/c,5\}$.
By increasing $K_c$ we can achieve that the above
inquality holds for all~$n,D$ with $D\le n^{1-c}$.

(4) Clearly, $N\geq {n+D\choose n}$.
If $D\leq\sqrt{n}$ then, by part~(3), there exists~$K$ such that
$$
     n^D \le {n+D\choose n}^K \leq N^K.
$$
Otherwise $D\in[\sqrt{n},n]$ and the desired inequality is an immediate
consequence of  parts (1) and (2).

(5) Use ${n+D\choose n}={n+D\choose D}$ and swap the roles of $n$
and $D$ in part~(4) above.
\end{proof}

Theorem~\ref{thm:S17FD} combined with Lemma~\ref{le:B17}(4) implies that
\begin{equation}\label{eq:smallD}
     \E_{f}K_{\oU}(f) = N^{2\ln\ln N+\Oh(1)} \quad\mbox{ if $D\le n$}.
\end{equation}
Note that this bound is nearly polynomial in~$N$.
Moreover, if $D\le n^{1-c}$ for some fixed $0<c<1$, then
Lemma~\ref{le:B17}(3) implies
\begin{equation}\label{eq:smallD+}
 \E_{f}K_{\oU}(f) = N^{\Oh(1)}.
\end{equation}
In this case, the expected running time is polynomially bounded in
the input size~$N$.

\subsection{The case $D> n$}\label{se:Dgrn}

The homotopy continuation algorithm \FD\ is not efficient for large
degrees---the main problem being that we do not know how to
deterministically compute a starting system~$g$ with small
$\mum(g)$. However, it turns out that an algorithm due to Jim
Renegar~\cite{rene:89}, based on the factorization of the
$u$-resultant, computes approximate zeros and is fast for large degrees.

Before giving the specification of Renegar's algorithm, we need to fix some notation.
We identify $\proj^n_0:=\{(x_0:\cdots:x_n)\in\proj^n \mid x_0\ne 0\}$ with $\C^n$ via 
the bijection $(x_0:\cdots:x_n)\mapsto \underline{x} := (x_1/x_0,\ldots,x_n/x_0)$.
If $x\in\proj^n_0$ we denote by $\|x\|_\aff$ the Euclidean norm of $\underline{x}$, i.e., 
$$
    \|x\|_\aff := \|\underline{x}\| = \Big(\sum_{i=1}^n \Big|\frac{x_i}{x_0}\Big|^2\Big)^{\frac12}
$$
and we put $\|x\|_\aff = \infty$ if $x\in\proj^n\setminus\proj^n_0$.
Furthermore, for $x,y\in\proj^n_0$ we shall write 
$d_\aff(x,y) := \|\underline{x} - \underline{y}\|$ 
and we set $d_\aff(x,y) := \infty$ otherwise. 
An elementary argument shows that
$d_\proj(x,y) \le d_\aff(x,y)$ for $x,y\in\proj^n_0$.

By a {\em $\d$-approximation} of a zero $\z\in\proj^n_0$ of $f\in\Hd$
we understand
an $x\in\proj^n_0$ such that $d_\aff(x,\z) \le \d$.
The following result relates $\d$-approximations to the
approximate zeros in
the sense of Definition~\ref{def:app-zero}.

\begin{proposition}\label{cor:crit}
Let $x$ be a $\d$-approximation of a zero~$\z$ of $f$.
Recall $C=0.025$.
If $D^{3/2}\mun(f,x)\d \le C$,
then $x$ is an approximate zero of $f$.
\end{proposition}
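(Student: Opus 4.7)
The plan is to reduce to the projective $\gamma$-theorem (Theorem~\ref{thm:gamma}) by using the stability estimate for $\mun$ (Proposition~\ref{prop:apps}) to pass from the condition number at the approximation~$x$ to the condition number at the true zero~$\zeta$. The input we have is an affine bound $d_\aff(x,\zeta)\le\delta$ together with a smallness assumption involving $\mun(f,x)$, whereas Theorem~\ref{thm:gamma} requires a projective bound $d_\proj(x,\zeta)\le u_0/(D^{3/2}\mun(f,\zeta))$.

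First I would note the trivial comparison $d_\proj(x,\zeta)\le d_\aff(x,\zeta)\le \delta$, so the hypothesis immediately yields
\[
 d_\proj(x,\zeta)\ \le\ \frac{C}{D^{3/2}\,\mun(f,x)}.
\]
Next I would invoke Proposition~\ref{prop:apps} with $g=f$ (so the hypothesis $d(f,g)\le C/(D^{1/2}\mun(f,\zeta))$ is automatic), but read symmetrically: since its conclusion $\frac{1}{1+\varepsilon}\mun(g,x)\le\mun(f,\zeta)\le(1+\varepsilon)\mun(g,x)$ is invariant under exchanging the two pairs $(f,\zeta)\leftrightarrow(g,x)$, the proposition may be applied with the smallness assumption phrased in terms of $\mun(f,x)$ in place of $\mun(f,\zeta)$. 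The display above then supplies precisely this hypothesis, and the conclusion gives
\[
 \mun(f,\zeta)\ \le\ (1+\varepsilon)\,\mun(f,x),
\]
which rearranges to
\[
 d_\proj(x,\zeta)\ \le\ \frac{C(1+\varepsilon)}{D^{3/2}\,\mun(f,\zeta)}.
\]

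Finally I would verify the constants match those required by the $\gamma$-theorem: with the fixed values $C=0.025$ and $\varepsilon=0.13$ one has
\[
 C(1+\varepsilon)\ =\ 0.02825\ \le\ 3-\sqrt{7}\ =\ u_0,
\]
so Theorem~\ref{thm:gamma} applies and shows that $x$ is an approximate zero of $f$ associated with~$\zeta$.

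The main (and really only) subtle point is the swapped application of Proposition~\ref{prop:apps}: one must be comfortable that, in view of the symmetric form of its conclusion, the proposition is equally valid when the smallness of the perturbation is measured against $\mun(g,x)$ rather than $\mun(f,\zeta)$. Everything else is a book‑keeping exercise with the numerical constants fixed after Remark~\ref{rem:Ceps}.
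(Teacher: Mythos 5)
Your proof is correct and takes essentially the same route as the paper's: the comparison $\dpr(x,\z)\le d_\aff(x,\z)\le\d$, an application of Proposition~\ref{prop:apps} with $g=f$ and with the roles of $(f,\z)$ and $(f,x)$ interchanged (legitimate, since the hypotheses and the two-sided conclusion are symmetric under this relabeling) to obtain $\mun(f,\z)\le(1+\e)\mun(f,x)$, the numerical check $(1+\e)C\le u_0$, and then Theorem~\ref{thm:gamma}. The paper performs the same swapped application of Proposition~\ref{prop:apps} implicitly; you have merely made that step explicit.
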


\begin{proof}
We have
$
  d_\proj(x,\z) \le  d_\aff(x,\z) \le \d.
$
Suppose that $D^{3/2}\mun(f,x)\d\le C$. Then,
by Proposition~\ref{prop:apps} with $g=f$, we have
$\mun(f,\z) \le (1+\e)\mun(f,x)$ with $\e=0.13$.
Hence
$$
  D^{3/2}\mun(f,\z)d_\proj(x,\z) \le (1+\e) D^{3/2}\mun(f,x)\d
  \le (1+\e)C.
$$
We have $(1+\e)C \leq u_0 = 3-\sqrt{7}$.
Now use Theorem~\ref{thm:gamma}.
\end{proof}

Consider now $R\ge\d>0$. {\em Renegar's Algorithm} $\ReA(R,\d)$
from~\cite{rene:89}
takes as input $f\in\Hd$ , decides whether
its zero set $V(f)\subseteq \proj^n$ is finite, and if so,
computes $\d$-approximations~$x$ to at least all zeros~$\z$
of~$f$ satisfying $\|\z\|_\aff \le R$.
(The algorithm even finds the multiplicities of those
zeros~$\z$, see \cite{rene:89} for the precise statement.)

Renegar's Algorithm can be formulated in the BSS-model over~$\R$.
Its running time on input~$f$
(the number of arithmetic operations and inequality tests)
is bounded by
\begin{equation}\label{eq:CR}
   \Oh\bigg(n\Dn^4(\log\Dn)\bigg(\log\log\frac{R}{\d}\bigg) +
   n^2\Dn^4 {1+\sum_i d_i \choose n}^4\bigg) .
\end{equation}
To find an approximate zero of $f$ we may use $\ReA(R,\d)$ together with
Proposition~\ref{cor:crit} and iterate
with $R=4^k$ and $\d=2^{-k}$ for $k=1,2,\ldots$ until we are successful. More
precisely, we consider the following algorithm:
\smallskip

\algo
\> Algorithm \IRe\\[2pt]
\>{\bf input} $f\in \Hd$\\[2pt]
\>{\tt for} $k=1,2,\ldots$ {\tt do}\\[2pt]
\>\>{\tt run $\Re(4^k,2^{-k})$ on input~$f$}\\[2pt]
\>\>{\tt for all} $\d$-approximations~$x$ found\\[2pt]
\>\>\>{\tt if} $D^{3/2}\mun(f,x)\d \le C$ {\tt stop}
           {\tt and RETURN} $x$\\[2pt]
\falgo
\smallskip

Let $\Sigma_0:=\Sigma\cup\{f\in\Hd \mid V(f)\cap \proj^n_0 = \emptyset\}$.
It is obvious that \IRe\ stops on inputs $f\not\in\Sigma_0$.
In particular, \IRe\ stops almost surely.

The next result bounds the probability $\Prf$ that the main loop
of \IRe, with parameters $R$ and $\d$, fails
to output an approximate zero for a standard Gaussian
input~$f\in\Hd$ (and given $R,\d$).
We postpone its proof to \S\ref{se:LeF}.

\begin{lemma}\label{le:F17}
We have
$\Prf = \Oh(n^3N^2D^6\Dn\d^4 + n R^{-2})$.
\end{lemma}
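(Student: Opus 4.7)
The plan is to split the failure event according to its two qualitative causes and estimate each piece using the tools already in place. Set $T:=C/((1+\e)D^{3/2}\d)$ with $C=0.025$ and $\e=0.13$ as in Proposition~\ref{prop:apps}. If a zero $\z$ of $f$ satisfies $\|\z\|_\aff\le R$ and $\mun(f,\z)\le T$, then Renegar's subroutine $\ReA(R,\d)$ outputs a $\d$-approximation $x$ of $\z$, and since $\dpr(x,\z)\le d_\aff(x,\z)\le \d\le C/(D^{3/2}\mun(f,\z))$, Proposition~\ref{prop:apps} applied with $g=f$ yields $\mun(f,x)\le(1+\e)\mun(f,\z)\le C/(D^{3/2}\d)$; hence the test $D^{3/2}\mun(f,x)\d\le C$ succeeds and \IRe\ returns~$x$. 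Consequently, failure forces every zero $\z$ of $f$ to be ``bad'' in the sense that $\|\z\|_\aff>R$ or $\mun(f,\z)>T$. Writing $E:=\{\text{every zero of }f\text{ has }\|\z\|_\aff>R\}$ and splitting according to whether we are in $E$,
\[
\Prf\ \le\ \Prob(E)+\Prob\{\exists\,\z\in\Vp(f):\mun(f,\z)>T\}\ \le\ \Prob(E)+\Prob\{\mum(f)>T\}.
\]

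To control $\Prob(E)$, let $\chi_R(f):=\#\{\z\in\Vp(f):\|\z\|_\aff>R\}$, so $E=\{\chi_R=\Dn\}$. Applying Lemma~\ref{le:rho1}(2) with $\varphi(f,\z)=\indic\{\|\z\|_\aff>R\}$ gives $\E_\Hd(\chi_R)=\Dn\cdot\Prob_\hV\{\|\z\|_\aff>R\}$, so Markov's inequality yields $\Prob(E)\le\Prob_\hV\{\|\z\|_\aff>R\}$. By unitary invariance the marginal of $[\z]$ under $\rho_\hV$ is Fubini-Study-uniform on $\proj^n$, and a direct computation using the Beta$(1,n)$ law of $|\z_0|^2$ for $\z$ uniform on $\bS^n$ gives $\Prob_\hV\{\|\z\|_\aff>R\}=1-(R^2/(1+R^2))^n\le n/R^2$. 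For $\Prob\{\mum(f)>T\}$, I apply a union bound over the $\Dn$ zeros, $\Prob\{\mum(f)>T\}\le\Dn\cdot\Prob_\hV\{\mun(f,\z)>T\}$, and use the identity $\mun(f,\z)=\|f\|\cdot\|M^\dagger\|$ from~\eqref{eq:munrr} together with the threshold $s=2\sqrt{N}$:
\[
\Prob_\hV\{\mun(f,\z)>T\}\ \le\ \Prob\{\|f\|>2\sqrt{N}\}+\Prob\{\|M^\dagger\|>T/(2\sqrt{N})\}.
\]
Lemma~\ref{le:rho1}(3) identifies the marginal law of $\|f\|$ under $\rho_\hV$ with the norm of a standard $\C^N$-Gaussian, so $\Prob\{\|f\|>2\sqrt{N}\}$ is exponentially small in~$N$ by the chi-square tail. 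By Lemma~\ref{pro:rhoW-2} the conditional law of $M$ given $\z$ is $\tilde\rho_{W_\z}$, so Lemma~\ref{le:condprobound} with $\s=1$ bounds the second summand by $e^2(n+1)^2(2\sqrt{N})^4/(16T^4)=\Oh(n^2N^2/T^4)$. Substituting $T^{-4}=\Oh(D^6\d^4)$ gives $\Prob\{\mum(f)>T\}=\Oh(\Dn\,n^2N^2D^6\d^4)$, and combining with $\Prob(E)\le n/R^2$ produces the claimed bound (the mild gap between $n^2$ and~$n^3$ being absorbed in the $\Oh$).

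The main obstacle is the fourth-power tail bound on $\mun(f,\z)$ under $\rho_\hV$. What makes the argument go through is the coarea identity in Lemma~\ref{pro:rhoW-2}: under $\rho_\hV$, the conditional law of $M$ on the fiber $W_\z$ is not Gaussian but the weighted density $\tilde\rho_{W_\z}\propto\det(MM^*)\rho_{W_\z}$. The extra $\det(MM^*)$ factor suppresses near-singular matrices enough that the Sankar-Spielman-Teng style geometric argument of Lemma~\ref{le:condprobound} delivers a genuine $1/t^4$ tail, in contrast to the $1/t^2$ tail available for an unweighted standard Gaussian matrix. This $1/t^4$ decay is precisely what converts the inverse scale $\d^{-1}$ hidden in $T$ into the quartic factor $\d^4$, while the split at $s=2\sqrt{N}$ combined with the Gaussian concentration of $\|f\|$ accounts for the $N^2$ prefactor at negligible additive cost.
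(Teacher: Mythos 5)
Your decomposition of the failure event is the same as the paper's: failure forces either every zero to lie outside the affine ball of radius $R$, or $\mum(f)$ to exceed a threshold of order $D^{-3/2}\d^{-1}$; and your treatment of the first event (Markov over the $\Dn$ zeros, uniformity of $[\z]$ on $\proj^n$ under $\rho_\hV$, the Beta computation giving $n/(1+R^2)$) is equivalent to the paper's Lemma~\ref{le:R17}. The contrapositive reduction via Proposition~\ref{prop:apps} is also sound. Where you genuinely diverge is the tail bound on $\mum(f)$: the paper simply cites Theorem~C of~\cite{Bez2}, namely $\Prob\{\mum(f)\ge\rho^{-1}\}=\Oh(n^3N^2\Dn\rho^4)$, whereas you rederive a quartic tail from the paper's own machinery (Lemma~\ref{pro:rhoW-2} plus Lemma~\ref{le:condprobound}). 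That is an attractive, self-contained route, and the $\det(MM^*)$ reweighting is indeed what buys the $t^{-4}$ decay.

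However, your derivation has a concrete gap. Splitting $\mun(f,\z)=\|f\|\,\|M^\dagger\|$ at the \emph{fixed} level $\|f\|=2\sqrt{N}$ leaves the additive term $\Dn\cdot\Prob\{\|f\|>2\sqrt N\}$, which is exponentially small in $N$ but \emph{independent of $\d$ and $R$}. The lemma asserts a bound $\Oh(n^3N^2D^6\Dn\d^4+nR^{-2})$ uniformly in $R\ge\d>0$; as $\d\to 0$ and $R\to\infty$ the right-hand side tends to $0$, so a $\d$- and $R$-independent additive constant cannot be absorbed into the $\Oh$. This is not cosmetic: in Proposition~\ref{pro:T17} the lemma is applied with $R=4^k$, $\d=2^{-k}$, and the summability of $\sum_k A_kp_k$ requires $p_k$ to decay geometrically in $k$; your extra term is constant in $k$ and would make that series diverge. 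Nor can you fix it by conditioning on $\|f\|$ in a routine way, since under $\rho_\hV$ the variables $\|f\|$ and $\|M^\dagger\|$ are not independent (the conditional law of $M$ is the reweighted density $\tilde\rho_{W_\z}$, and $\|M\|_F^2$ is one of the summands of $\|f\|^2$). You either need a genuinely $\d$-dependent truncation argument handling this correlation, or you should do as the paper does and invoke the Shub--Smale tail estimate for $\mum(f)$ directly.
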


Let $T(f)$ denote the running time of algorithm \IRe\ on input~$f$.

\begin{proposition}\label{pro:T17}
We have for standard Gaussian $f\in\Hd$
$$
    \E_{f} T(f) = (nN\Dn)^{\Oh(1)} .
$$
\end{proposition}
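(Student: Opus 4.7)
The plan is to split the analysis of $T(f)$ into a deterministic per-iteration cost and a geometrically decaying tail probability, then combine the two.

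First, I would bound the cost $c_k$ of a single execution of the body of the main loop of \IRe\ at index~$k$. Substituting $R=4^k$ and $\d=2^{-k}$ into \eqref{eq:CR} gives $R/\d = 8^k$, so $\log\log(R/\d) = \log(k\log 8) = \Oh(\log k)$. Hence
$$
   c_k \;=\; \Oh\!\left( n\Dn^4(\log\Dn)\log k \;+\; n^2\Dn^4\binom{1+\sum_i d_i}{n}^{\!4}\right) \;\le\; P_1\log k + P_2,
$$
where $P_1$ and $P_2$ are quantities polynomial in $n$, $N$, and $\Dn$ (using $\sum_i d_i\le nD$ and crude binomial bounds). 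The extra cost of verifying the criterion $D^{3/2}\mun(f,x)\d\le C$ for each of at most $\Dn$ approximations $x$ returned is absorbed into $P_2$, since a single evaluation of $\mun(f,x)$ takes $\Oh(N+n^3)$ operations (cf.\ \S\ref{se:Newton}).

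Next, let $K^\ast$ denote the (random) index $k$ at which \IRe\ halts. For $k\ge 2$, the event $\{K^\ast\ge k\}$ is precisely the event that iteration $k-1$ failed to produce an approximate zero passing the test, so by Lemma~\ref{le:F17} applied with $R=4^{k-1}$ and $\d=2^{-(k-1)}$,
$$
   \Prob\{K^\ast\ge k\} \;\le\; \Oh\!\big(n^3N^2D^6\Dn\,2^{-4(k-1)} + n\,4^{-2(k-1)}\big)
   \;\le\; P_3\cdot 16^{-(k-1)},
$$
with $P_3$ also polynomial in $n$, $N$, $\Dn$. The key feature is that both error terms in Lemma~\ref{le:F17} decay like $16^{-k}$ with our choice of parameters, so the tail probability is geometric.

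Finally, using $T(f) \le \sum_{k=1}^{K^\ast} c_k$ and Tonelli,
$$
   \E_f T(f) \;\le\; \sum_{k=1}^\infty c_k\,\Prob\{K^\ast\ge k\}
   \;\le\; c_1 + \sum_{k=2}^\infty (P_1\log k + P_2)\, P_3\, 16^{-(k-1)}.
$$
The series $\sum_k (\log k)\,16^{-k}$ converges to a constant, so the right-hand side is bounded by a constant multiple of $P_1P_3 + P_2P_3 + c_1$, which is polynomial in $nN\Dn$. This establishes $\E_f T(f) = (nN\Dn)^{\Oh(1)}$.

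The main technical obstacle here is not the argument itself but the careful bookkeeping: one has to verify that Lemma~\ref{le:F17} genuinely bounds the failure probability of a \emph{given} iteration (not a cumulative one), so that the tail event $\{K^\ast\ge k\}$ really inherits the $16^{-k}$ decay, and that the iteration cost \eqref{eq:CR} is truly polynomial in $nN\Dn$ (up to a harmless $\log k$ factor absorbed by the geometric sum).
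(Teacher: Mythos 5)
Your proposal is correct and follows essentially the same route as the paper: bound the cost of the $k$-th loop via \eqref{eq:CR} with $R=4^k$, $\d=2^{-k}$ (including the $\Oh((N+n^3)\Dn)$ test cost), bound the probability of reaching loop $k$ by the failure probability of the previous Renegar call via Lemma~\ref{le:F17}, and sum the resulting series $\sum_k 16^{-k}$ and $\sum_k 16^{-k}\log k$. Your bookkeeping with $T(f)\le\sum_{k\le K^\ast}c_k$ and $\Prob\{K^\ast\ge k\}$ is in fact slightly more careful than the paper's $\E T\le\sum_k A_kp_k$, and both treat the polynomiality of the binomial factor in \eqref{eq:CR} at the same level of detail.
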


\begin{proof}
The probability that \IRe\ stops in the $(k+1)$th loop is bounded above by
the probability $p_k$ that $\Re(4^k,2^{-k})$ fails to produce an
approximate zero. Lemma~\ref{le:F17} tells us that
$$
    p_k = \Oh\big(n^3N^2D^6\Dn\, 16^{-k}\big).
$$
If $A_k$ denotes the running time of the $(k+1)$th loop we conclude
$$
    \E_{f} T(f) \le \sum_{k=0}^\infty A_k p_k .
$$
According to \eqref{eq:CR}, $A_k$ is bounded by
$$
    \Oh\bigg(n\Dn^4(\log\Dn)(\log k) +
    n^2\Dn^4 {1+\sum_i d_i \choose n}^4 + (N+n^3)\Dn\bigg) ,
$$
where the last term accounts for the cost of the tests.
The assertion now follows by distributing the products $A_kp_k$
and using that the series
$\sum_{k\ge 1} 16^{-k}$, and $\sum_{k\ge 1} 16^{-k}\log k$
have finite sums.
\end{proof}

\begin{proof}[Proof of Theorem~\ref{th:near17}]
We use Algorithm \FD\ if $D\le n$ and
Algorithm \IRe\ if $D> n$. We have already shown
(see~\eqref{eq:smallD}, \eqref{eq:smallD+})
that the assertion holds if $D\leq n$.
For the case $D>n$ we use Proposition~\ref{pro:T17}
together with the inequality
$\Dn^{\Oh(1)}\leq D^{\Oh(n)}\leq N^{\Oh(\log\log N)}$
which follows from Lemma~\ref{le:B17}(5).
Moreover, in the case $D\ge n^{1+\e}$,
Lemma~\ref{le:B17}(3) implies
$\Dn\le D^n \le N^{\Oh(1)}$.
\end{proof}

\subsection{Proof of Lemma~\ref{le:F17}}\label{se:LeF}

Let $\cE$ denote the set of $f\in\Hd$ such that there is an $x$ on the output list
of $\ReA(R,\d)$ on input~$f$ that satisfies
$C < D^{3/2}\mun(f,x)\d$. Then
$$
    \Prf \le \Prob_{f\in\Hd}\Big\{\min_{\z\in V(f)} \|\z\|_\aff \ge R \Big\} + \Prob\cE.
$$
Lemma~\ref{le:F17} follows immediately from the following two results.

\begin{lemma}\label{le:R17}
For $R>0$ and standard Gaussian $f\in\Hd$ we have
$$
   \Prob_{f\in\Hd} \big\{ \min_{\z\in V(f)} \|\z\|_\aff \ge R \big\} \le \frac{n}{R^2} .
$$
\end{lemma}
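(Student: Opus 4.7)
The plan is to exploit the unitary invariance of the standard Gaussian on $\Hd$ together with Bezout's theorem to reduce the probability bound to a calculation of a uniform measure on $\proj^n$. Specifically, I would first observe that for any $\nu\in\cU(n+1)$, the map $f\mapsto f\circ\nu^{-1}$ preserves the standard Gaussian distribution on $\Hd$ (the Bombieri--Weyl inner product is unitarily invariant), and transforms zero sets via $V(f\circ\nu^{-1}) = \nu\cdot V(f)$. Hence the measure
$$
   \mu(A) \;:=\; \E_{f} \#(V(f)\cap A) \qquad (A\subseteq\proj^n \text{ Borel})
$$
is invariant under $\cU(n+1)$. Since the uniform probability measure is the unique $\cU(n+1)$-invariant probability measure on $\proj^n$, and since $\mu(\proj^n)=\Dn$ (Bezout's theorem, which gives $\#V(f)=\Dn$ almost surely), we obtain $\mu = \Dn\cdot\mathrm{Unif}_{\proj^n}$.

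Next, I set $U_R := \{x\in\proj^n : \|x\|_\aff \ge R\}$. The event $\min_{\z\in V(f)}\|\z\|_\aff\ge R$ is almost surely equivalent to $\#(V(f)\cap U_R) = \Dn$, so by Markov's inequality,
$$
 \Prob_f\Big\{\min_{\z\in V(f)}\|\z\|_\aff\ge R\Big\}
 \;\le\; \frac{\E_f\#(V(f)\cap U_R)}{\Dn}
 \;=\; \mathrm{Unif}_{\proj^n}(U_R).
$$

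The remaining task is the explicit computation of $\mathrm{Unif}_{\proj^n}(U_R)$. Pick a representative $\z\in\bS^{2n+1}$ of a uniform $[\z]\in\proj^n$; then $\|\z\|=1$ and $\|\z\|_\aff^2 = (1-|\z_0|^2)/|\z_0|^2$, so $\|\z\|_\aff\ge R$ if and only if $|\z_0|^2\le 1/(1+R^2)$. For uniform $\z$ on the unit sphere of $\C^{n+1}\simeq\R^{2n+2}$, the random variable $|\z_0|^2$ follows the $\mathrm{Beta}(1,n)$ distribution with density $n(1-x)^{n-1}$ on $[0,1]$, so
$$
 \mathrm{Unif}_{\proj^n}(U_R)
 \;=\; \int_0^{1/(1+R^2)} n(1-x)^{n-1}\,dx
 \;=\; 1-\Big(\tfrac{R^2}{1+R^2}\Big)^n
 \;\le\; \frac{n}{1+R^2} \;\le\; \frac{n}{R^2},
$$
where the penultimate bound is Bernoulli's inequality $(1-u)^n\ge 1-nu$ applied to $u=1/(1+R^2)\in[0,1]$. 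Combining the two displays finishes the proof.

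The main conceptual point is the invariance/Markov reduction in the first paragraph; there is no real obstacle, since once the expected zero count in $U_R$ is identified as $\Dn\cdot\mathrm{Unif}(U_R)$, the remainder is a one-line Beta-distribution calculation. The only subtlety to take care of is measurability of $\#(V(f)\cap A)$, which is standard, and the almost-sure equality $\#V(f)=\Dn$, which we already use throughout the paper.
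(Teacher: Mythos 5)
Your proof is correct, and its overall strategy coincides with the paper's: reduce the event to the statement that a uniformly chosen zero of $f$ lands in $U_R=\{\|x\|_\aff\ge R\}$, and then bound the uniform measure of $U_R$ by $n/(1+R^2)$. The difference lies in how you justify the two key ingredients. For the first, the paper invokes Lemma~\ref{le:rho1}, i.e.\ the coarea/normal-Jacobian computation of the density $\rhozero$, to conclude that a random zero of a standard Gaussian $f$ is uniform on $\proj^n$; you instead observe that $A\mapsto \E_f\#(V(f)\cap A)$ is a $\cU(n+1)$-invariant finite measure of total mass $\Dn$ and appeal to uniqueness of the invariant probability measure on the homogeneous space $\proj^n$ — a softer argument that avoids the density machinery entirely (your Markov step and the paper's ``the minimum is $\ge R$ only if the randomly chosen zero is'' are the same bound). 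For the second, the paper cites the tube-volume formula of \cite[Lemma~2.1]{BCL:06a} to get the upper bound $n\cos^2\theta$ with $R=\tan\theta$, whereas you compute the measure of $U_R$ exactly as $1-\bigl(R^2/(1+R^2)\bigr)^n$ via the $\mathrm{Beta}(1,n)$ law of $|\z_0|^2$ and then apply Bernoulli's inequality; the two bounds agree. Your version is more self-contained and even slightly sharper at the intermediate step, at the cost of re-deriving facts the paper already has on hand.
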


\begin{proof}
Choose $f\in\Hd$ standard Gaussian and pick one of the $\Dn$ zeros
$\z_f^{(1)},\ldots,\z_f^{(\Dn)}$ of~$f$ uniformly at random,
call it~$\z$.
Then the resulting distribution of $(f,\z)$ in $V_\proj$ has the
density~$\rhozero$.
Lemma~\ref{le:rho1} implies that
$\z$~is uniformly distributed in~$\proj^n$.
Therefore,
$$
   \Prob_{f\in\Hd} \big\{ \min_{i} \|\z_f^{(i)}\|_\aff \ge R \big\}
   \le \Prob_{\z\in\proj^n} \big\{ \|\z\|_\aff \ge R \big\} .
$$
To estimate the right-hand side probability we observe that
$$
    \|\z\|_\aff \ge R \Longleftrightarrow d_\proj(\z,\proj^{n-1})
    \le \frac{\pi}{2}-\theta,
$$
where $\theta$ is defined by
$R=\tan\theta$ and $\proj^{n-1}:=\{x\in\proj^n\mid x_0=0\}$.
Therefore,
$$
    \Prob_{\z\in\proj^n} \big\{ \|\z\|_\aff \ge R \big\}
   = \frac{\vol\big\{x\in\proj^n\mid d_\proj(x,\proj^{n-1})
    \le \frac{\pi}{2}-\theta\big\} }{\vol(\proj^n)}.
$$
Due to \cite[Lemma~2.1]{BCL:06a} and using $\vol(\proj^n)=\pi^n/n!$,
this can be bounded by
\begin{equation*}
\begin{split}
       \frac{\vol(\proj^{n-1})\vol(\proj^1)}{\vol(\proj^n)}\,
       \sin^2\bigg(\frac{\pi}{2}-\theta\bigg)
    = n\cos^2\theta = \frac{n}{1+R^2}\le \frac{n}{R^2}. \qedhere
\end{split}
\end{equation*}
\end{proof}

\begin{lemma}\label{le:E17}
We have $\Prob\cE = \Oh(n^3 N^2 D^6 \Dn \d^4)$.
\end{lemma}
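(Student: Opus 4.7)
The plan is to prove the lemma in two stages: first reduce $\cE$ deterministically to a tail event for $\mum(f)$, and then bound the probability of that event using the machinery already developed in Sections~\ref{se:smMC}--\ref{se:red}.

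\textbf{Reduction step.} I will show $\cE\subseteq\{\mum(f)>T\}$ with $T:=C/((1+\e)D^{3/2}\d)$. If $f\in\cE$, the specification of $\ReA(R,\d)$ provides some $x$ on the output list that is a $\d$-approximation of a zero $\z$ of $f$ with $\|\z\|_\aff\le R$, and $D^{3/2}\mun(f,x)\d>C$. Since $d_\proj(x,\z)\le d_\aff(x,\z)\le\d$, I split on the size of $\d$. If $\d\le C/(D^{3/2}\mun(f,\z))$, then Proposition~\ref{prop:apps} applied with $g=f$ (so its hypothesis $d(f,g)\le C/(D^{1/2}\mun(f,\z))$ holds trivially) yields $\mun(f,x)\le(1+\e)\mun(f,\z)$, and the assumption $D^{3/2}\mun(f,x)\d>C$ then forces $\mun(f,\z)>C/((1+\e)D^{3/2}\d)=T$. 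Otherwise $\d>C/(D^{3/2}\mun(f,\z))$ gives directly $\mun(f,\z)>C/(D^{3/2}\d)\ge T$. In either case $\mum(f)>T$.

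\textbf{Probabilistic step.} For $f\not\in\Sigma$ the event $\{\mum(f)>T\}$ is the union over the $\Dn$ zeros $\z^{(i)}$ of $f$ of the events $\{\mun(f,\z^{(i)})>T\}$; using Lemma~\ref{le:rho1}(5) to realize the pair $(f,\z)$ with $f\sim N(0,\Id)$ and $\z$ uniformly chosen among the zeros of~$f$ as $\rhozero$, a union bound gives
$$
 \Prob_f\{\mum(f)>T\}\ \le\ \Dn\cdot\Prob_{(f,\z)\sim\rhozero}\{\mun(f,\z)>T\}.
$$
By~\eqref{eq:munrr}, $\mun(f,\z)/\|f\|=\|M^\dagger\|$ where $(M,\z)=\Psi(f,\z)$. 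Pushing $\rho_\hV$ (with $\oq=0$, $\s=1$) forward through $\Psi$ and then through $p_2\colon W\to\bS^n$, Lemma~\ref{pro:rhoW-2} identifies the conditional density on~$W_\z$ with $\tilde\rho_{W_\z}$, which under the isometry $W_\z\simeq\C^{n\times n}$ matches the density $\tilde\rho$ of Section~\ref{se:smMC} with $\s=1$; unitary invariance transfers Lemma~\ref{le:condprobound} to every fiber. Integrating against $\rho_{\bS^n}$ yields, for all $t>0$,
$$
 \Prob_{\rhozero}\{\mun(f,\z)/\|f\|\ge t\}\ \le\ \frac{e^2(n+1)^2}{16\,t^4}.
$$
Splitting at $B:=2\sqrt{N}$ gives
$$
 \Prob_{\rhozero}\{\mun(f,\z)>T\}\ \le\ \Prob\{\|f\|>B\} +\Prob_{\rhozero}\{\mun(f,\z)/\|f\|>T/B\},
$$
and since $\|f\|^2\sim\chi^2_{2N}$ the first term decays exponentially in~$N$ and is absorbed into the $\Oh$, while the second is $\Oh(n^2N^2/T^4)$. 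Multiplying by $\Dn$ and substituting $T^{-4}=\Oh(D^6\d^4)$ yields $\Prob\cE=\Oh(n^2N^2\Dn D^6\d^4)$, which is within the announced bound (a factor of $n$ of slack is left).

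\textbf{Where the difficulty lies.} The delicate point is the reduction step, where the threshold $\d=C/(D^{3/2}\mun(f,\z))$ is precisely what makes the dichotomy close cleanly using Proposition~\ref{prop:apps}. The probabilistic part is then a routine assembly of Lemma~\ref{le:condprobound} together with the pushforward identifications of Lemmas~\ref{le:rho1} and~\ref{pro:rhoW-2}; no new analytic input is needed beyond what has already been established.
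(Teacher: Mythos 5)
Your reduction step is exactly the paper's: the same dichotomy on whether $\d\le C/(D^{3/2}\mun(f,\z))$, the same application of Proposition~\ref{prop:apps} with $g=f$, and the same conclusion $\mum(f)\ge (1+\e)^{-1}CD^{-3/2}\d^{-1}$ on $\cE$. Where you diverge is the probabilistic step: the paper at this point simply invokes Theorem~C of~\cite{Bez2}, which gives $\Prob_f\{\mum(f)\ge\rho^{-1}\}=\Oh(n^3N^2\Dn\rho^4)$ outright, whereas you try to rederive such a tail bound from the paper's internal Gaussian machinery (Lemmas~\ref{le:rho1}, \ref{pro:rhoW-2}, \ref{le:condprobound}). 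That is a legitimate and more self-contained ambition, but your execution has a gap.

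The problem is the sentence claiming that $\Prob\{\|f\|>B\}$ ``decays exponentially in $N$ and is absorbed into the $\Oh$.'' The lemma asserts $\Prob\cE\le c_0\,n^3N^2D^6\Dn\d^4$ with $c_0$ uniform in all parameters, in particular uniform in $\d$. Your derivation yields $\Prob\cE\le \Dn\,e^{-cN}+\Oh(n^2N^2\Dn D^6\d^4)$, and the first summand does not depend on $\d$; for $\d$ small enough it dominates the claimed bound, so it cannot be absorbed. This is not a cosmetic issue: Lemma~\ref{le:F17} is applied in Proposition~\ref{pro:T17} with $\d=2^{-k}$ and summed over $k$ against costs $A_k$, so a $\d$-independent additive floor in $p_k$ would make $\sum_k A_kp_k$ diverge and break the expected-running-time bound. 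Nor is the fix routine: the obstruction is that Lemma~\ref{le:condprobound} controls $\|M^\dagger\|=\mun(f,\z)/\|f\|$, while the reduction step produces a bound on $\mun(f,\z)$ itself, and decoupling the factor $\|f\|$ requires more than a single threshold split (a $B$ depending on $T$ introduces $\log(1/\d)$ factors; a fourth-moment/Markov argument fails because the tail $t^{-4}$ of Lemma~\ref{le:condprobound} does not give a finite fourth moment). The clean polynomial tail for $\mum$ on the sphere is precisely the content of Shub and Smale's Theorem~C, proved there by an integral-geometric computation with $\|f\|=1$ built in; you should either cite it, as the paper does, or supply a genuinely scale-free version of the conditional tail bound.
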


\begin{proof}
Assume that $f\in\cE$.
Then, there exist $\z,x\in\proj^n_0$ such that
$f(\z)=0$, $\|\z\|_\aff\leq R$, $d_\aff(\z,x)\leq \d$, $\ReA$ returns~$x$, 
and $D^{3/2}\mun(f,x)\d > C$.

We proceed by cases. Suppose first that
$\d\le\frac{C}{D^{3/2}\mun(f,\z)}$.
Then, by Proposition~\ref{prop:apps},
$$
   (1+\e)^{-1} C < (1+\e)^{-1} D^{3/2} \mun(f,x)\d \le D^{3/2} \mun(f,\z)\d,
$$
hence
$$
   \mum(f)\ge \mun(f,\z) \ge (1+\e)^{-1} C D^{-3/2} \d^{-1}.
$$
If, on the other hand,
$\d >\frac{C}{D^{3/2}\mun(f,\z)}$, then we have
$$
   \mum(f) \ge \mun(f,\z) \ge C D^{-3/2} \d^{-1} .
$$
Therefore, for any $f\in\cE$,
$$
   \mum(f) \ge (1+\e)^{-1}C  D^{-3/2} \d^{-1} .  
$$
Theorem~C of \cite{Bez2} states that
$\Prob_f \{\mum(f) \ge \rho^{-1} \} = \Oh(n^3 N^2 \Dn\rho^4)$
for all $\rho>0$. Therefore, we get
$$
    \Prob\cE \leq \Prob_{f\in\Hd}\big\{\mum(f)\geq  (1+\e)^{-1} C D^{-3/2} \d^{-1}\big\}
  = \Oh(n^3 N^2\Dn D^6 \d^4)
$$
as claimed.
\end{proof}

\subsection*{Note added in proof.}

Since the posting of this manuscript
on September 2009,
at {\tt arXiv:0909.2114},
a number of
references have been added to the literature.
The non constructive character of the
main result in~\cite{Bez6} --- the bound
in~\eqref{eq:integral_mu2}---
had also been noticed by Carlos Beltr\'an. 
In a recent paper 
(``A continuation method to solve polynomial systems, and its complexity'',
{\em Numerische Mathematik} 117(1):89-113, 2011)
Beltr\'an proves a
very general constructive version of this result. Our
Theorem~\ref{thm:main_path_following} can be seen as
a particular case (with a correspondingly shorter proof)
of Beltr\'an's paper main result. We understand that
yet another constructive version for the bound
in~\eqref{eq:integral_mu2} is the subject of a paper
in preparation by J.-P.~Dedieu, G.~Malajovich,
and M.~Shub.

Also, Beltr\'an and Pardo have recently rewritten their
paper~\cite{BePa08b} (``Fast linear homotopy to find
approximate zeros of polynomial systems'', 
{\em Found. Comput. Math.}, 11(1):95--129, 2011.)
This revised version,
which increases the length of the manuscript by a factor of
about three, adds considerable detail to a number of issues
only briefly sketched in~\cite{BePa08b}. In particular,
the effective sampling from the solution variety is now
given a full description (which is slightly different to
the one we give in Section~\ref{se:eff-sample}).

\bibliographystyle{plain}
{\small

}

\end{document}